\tikzset{
    lablv/.style={anchor=south, rotate=90, inner sep=.5mm},
    labld/.style={anchor=south, rotate=150, inner sep=.8mm},
    lablad/.style={anchor=north, rotate=40, inner sep=1.2mm}
}
\pgfplotsset{compat=1.18}
\definecolor{valentia}{rgb}{233,78,82}
\definecolor{titleblue}{rgb}{114, 146, 162}
\DeclareFontFamily{U}{BOONDOX-calo}{\skewchar\font=45 }
\DeclareFontShape{U}{BOONDOX-calo}{m}{n}{
  <-> s*[1.05] BOONDOX-r-calo}{}
\DeclareFontShape{U}{BOONDOX-calo}{b}{n}{
  <-> s*[1.05] BOONDOX-b-calo}{}
\DeclareMathAlphabet{\mathcalb}{U}{BOONDOX-calo}{m}{n}
\SetMathAlphabet{\mathcalb}{bold}{U}{BOONDOX-calo}{b}{n}
\DeclareMathAlphabet{\mathbcalb}{U}{BOONDOX-calo}{b}{n}
\DeclareMathAlphabet{\mathscr}{U}{rsfso}{m}{n}
\DeclareFontFamily{OT1}{pzc}{}
\DeclareFontShape{OT1}{pzc}{m}{it}{ <-> s*[1.2] pzcmi7t }{}
\DeclareMathAlphabet{\mathpzc}{OT1}{pzc}{m}{it}
\DeclareMathSymbol{\smin}{\mathbin}{AMSa}{"39}
\newcommand{\Addresses}{{
  \bigskip
  \footnotesize

  R.~Paegelow, \textsc{Cité scientifique, 59655 Villeneuve-d'Ascq FRANCE.}\par\nopagebreak
  \textit{E-mail address}, R.~Paegelow: \texttt{raphael.paegelow@univ-lille.fr}
  \medskip

}}
\title{Fixed points in Gieseker spaces and blocks of Ariki-Koike algebras}
\author{Raphaël Paegelow}
\begin{document}
\newgeometry{top=2cm, bottom=2.5cm, left=2.5cm, right=2.5cm}
\theoremstyle{definition}
\newtheorem{deff}{Definition}[section]
\newtheorem{nota}[deff]{Notation}

\newtheoremstyle{cremark}
    {\dimexpr\topsep/2\relax}
    {\dimexpr\topsep}
    {}
    {}
    {}
    {.}
    {.5em}
    {}

\theoremstyle{cremark}
\newtheorem{rmq}[deff]{Remark}
\newtheorem{ex}[deff]{Example}

\theoremstyle{plain}

\newtheorem{thm}[deff]{Theorem}
\newtheorem{cor}[deff]{Corollary}
\newtheorem{prop}[deff]{Proposition}
\newtheorem{lemme}[deff]{Lemma}
\newtheorem{fait}[deff]{Fact}
\newtheorem{conj}[deff]{Conjecture}
\newtheorem*{theorem1}{Theorem}
\newtheorem*{proposition1}{Proposition a}
\newtheorem*{proposition2}{Proposition b}
\newtheorem*{lemme1}{Lemma}
\newtheorem*{theoremi}{Theorem A}
\newtheorem*{theoremii}{Theorem B}
\newtheorem*{theoremiii}{Theorem C}
\newtheorem*{cor1}{Corollary}


\newcommand{\TODO}{\colorbox{red}{\textbf{{TODO}}}}

\newcommand{\DMQ}{\overline{Q^0_{\Gamma}}}
\newcommand{\DMQGf}{\overline{Q_{G^f}}}
\newcommand{\DMQf}{\overline{Q_{\Gamma^f}}}
\newcommand{\DMQJf}{\overline{Q_{\bullet^f}}}

\newcommand{\QVG}{\mathcal{M}^{\Gamma}}
\newcommand{\tQVG}{\tilde{\mathcal{M}}^{\Gamma}}
\newcommand{\QV}{\mathcal{M}}
\newcommand{\Gie}{\mathpzc{G}}
\newcommand{\tQV}{\tilde{\mathcal{M}}}
\newcommand{\QVTM}{\mathcal{M}^{\Gamma}_{\boldsymbol{\theta}}(M)}
\newcommand{\QVT}{\mathcal{M}^{\Gamma}_{\boldsymbol{\theta}}}
\newcommand{\QVTdef}{\mathcal{M}^{\Gamma}_{\boldsymbol{\theta},\lb}}
\newcommand{\tQVTdef}{\tilde{\mathcal{M}}^{\Gamma}_{\boldsymbol{\theta}, \lb}}
\newcommand{\tQVT}{\tilde{\mathcal{M}}^{\Gamma}_{\boldsymbol{\theta}}}
\newcommand{\tQVTM}{\tilde{\mathcal{M}}^{\Gamma}_{\boldsymbol{\theta}}(M)}
\newcommand{\QVTMs}{\mathcal{M}^{\Gamma}_{\boldsymbol{\theta}}(M)}
\newcommand{\tQVTMs}{\tilde{\mathcal{M}}^{\Gamma}_{\boldsymbol{\theta}}(M)}
\newcommand{\tQVTMb}{\tilde{\mathcal{M}}^{\Gamma}_{\boldsymbol{\theta},\lb\delta^{\Gamma}}(M)}
\newcommand{\QVTMb}{\mathcal{M}^{\Gamma}_{\boldsymbol{\theta},\lb\delta^{\Gamma}}(M)}
\newcommand{\QVTMdef}{\mathcal{M}^{\Gamma}_{\lb\delta^{\Gamma}}(M)}
\newcommand{\tQVTMdef}{\tilde{\mathcal{M}}^{\Gamma}_{\lb\delta^{\Gamma}}(M)}
\newcommand{\QVTd}{\mathcal{M}^{\Gamma}_{\boldsymbol{\theta}}(d)}
\newcommand{\tQVTd}{\tilde{\mathcal{M}}^{\Gamma}_{\boldsymbol{\theta}}(d)}
\newcommand{\QVJn}{\mathcal{M}^{\bullet}_{\boldsymbol{1}}(n)}
\newcommand{\tQVJn}{\tilde{\mathcal{M}}^{\bullet}_{\boldsymbol{1}}(n)}
\newcommand{\QVJnG}{{\mathcal{M}^{\bullet}_{\boldsymbol{1}}(n)}^{\Gamma}}
\newcommand{\QVJndefu}{{\mathcal{M}^{\bullet}_{1}(n)}}
\newcommand{\tQVJndefu}{\tilde{\mathcal{M}}^{\bullet}_{1}(n)}
\newcommand{\QVJnGdef}{{\mathcal{M}^{\bullet}_{1}(n)}^{\Gamma}}
\newcommand{\tQVJndef}{\tilde{\mathcal{M}}^{\bullet}_{\lb}(n)}
\newcommand{\tQVJns}{\tilde{\mathcal{M}}^{\bullet}_{\boldsymbol{\theta}}(n)}
\newcommand{\QVJns}{{\mathcal{M}^{\bullet}_{\boldsymbol{\theta}}(n)}}
\newcommand{\QVJndef}{{\mathcal{M}^{\bullet}_{\lb}(n)}}
\newcommand{\tQVJnb}{\tilde{\mathcal{M}}^{\bullet}_{\boldsymbol{\theta},\lb}(n)}
\newcommand{\QVJnb}{{\mathcal{M}^{\bullet}_{\boldsymbol{\theta},\lb}(n)}}

\newcommand{\Hom}{\mathrm{Hom}}
\newcommand{\GL}{\mathrm{GL}}
\newcommand{\Tr}{\mathrm{Tr}}
\newcommand{\Aut}{\mathrm{Aut}}
\newcommand{\mult}{\mathrm{mult}}
\newcommand{\module}{\mathrm{mod}}
\newcommand{\Dyn}{\mathrm{Dyn}}
\newcommand{\End}{\mathrm{End}}
\newcommand{\SL}{\mathrm{SL}}
\newcommand{\SU}{\mathrm{SU}}
\newcommand{\G}{\mathrm{G}}
\newcommand{\op}{\mathrm{op}}
\newcommand{\RepGa}{\mathbf{McK}(\Gamma)}
\newcommand{\Fr}{\mathrm{Fr}}
\newcommand{\Reg}{\mathrm{Reg}}
\newcommand{\id}{\mathrm{id}}
\newcommand{\Res}{\mathrm{Res}}
\newcommand{\Ind}{\mathrm{Ind}}
\newcommand{\std}{\mathrm{std}}
\newcommand{\Repm}{\mathbf{Rep}(\overline{Q_{\Gamma^f}})}

\newcommand{\Hk}{\mathcal{H}}
\newcommand{\Ck}{\mathscr{X}}
\newcommand{\Xk}{\mathcal{X}}

\newcommand{\Rep}{\mathrm{Rep}}
\newcommand{\RepG}{\mathrm{Rep}_{\Gamma}}
\newcommand{\RepQ}{\mathcal{R}}
\newcommand{\RepGn}{\mathrm{Rep}_{\Gamma,n}}
\newcommand{\RepGk}{\mathrm{Rep}_{\Gamma,k}}
\newcommand{\CharG}{\mathrm{Char}_{\Gamma}}
\newcommand{\CharGn}{\mathrm{Char}_{\Gamma,n}}
\newcommand{\CharGk}{\mathrm{Char}_{\Gamma,k}}
\newcommand{\Xstd}{X_{\mathrm{std}}}

\newcommand{\BD}{BD}
\newcommand{\Gamman}{\Gamma_n}
\newcommand{\Dt}{\tilde{D}}
\newcommand{\LDt}{L(\Dt_{l+2})}
\newcommand{\Dtc}{\tilde{D}}
\newcommand{\Ct}{\tilde{C}}
\newcommand{\Dtl}{\tilde{D}_{l+2}}
\newcommand{\WaD}{W({\tilde{D}_{l+2}})}
\newcommand{\WD}{W({D_{l+2})}}
\newcommand{\IG}{\mathrm{Irr}_{\Gamma}}

\newcommand{\Qc}{Q^{\vee}}
\newcommand{\QDts}{Q(\Dt_{l+2}^{\sigma})[0^+]}
\newcommand{\deltD}{\delta^{\widetilde{BD}_{2l}}}
\newcommand{\av}{\alpha^{\vee}}
\newcommand{\ABDz}{\mathcal{A}^{n,\T_1}_{\BD_{\ell}}}

\newcommand{\Wa}{W_{\Gamma}^{\text{aff}}}

\newcommand{\Taut}{\mathcal{T}_{d}}
\newcommand{\TautT}{\tilde{\mathcal{T}}_{d}}

\newcommand{\bigzero}{\mbox{\normalfont\Large\bfseries 0}}
\newcommand{\rvline}{\hspace*{-\arraycolsep}\vline\hspace*{-\arraycolsep}}
\newcommand{\C}{\mathbb{C}}
\newcommand{\Z}{\mathbb{Z}}
\newcommand{\R}{\mathbb{R}}
\newcommand{\Q}{\mathbb{Q}}
\newcommand{\lb}{\lambda}
\newcommand{\lbb}{\lambda^{\bullet}}
\newcommand{\mub}{\mu^{\bullet}}
\newcommand{\gammab}{\gamma^{\bullet}}
\newcommand{\iso}{\xrightarrow{\,\smash{\raisebox{-0.65ex}{\ensuremath{\scriptstyle\sim}}}\,}}
\newcommand{\Sk}{\mathfrak{S}}
\newcommand{\Uk}{\mathcal{U}}
\newcommand{\T}{\mathbb{T}}
\newcommand{\Pro}{\mathscr{P}}
\newcommand{\Resol}{\mathscr{R}}
\newcommand{\Pnlo}{\mathcal{P}_{n,\ell}^{o}}
\newcommand{\IC}{\mathcal{IC}}
\newcommand{\gl}{\mathrm{g}_{\ell}}
\newcommand{\gG}{\mathrm{g}_{\Gamma}}
\newcommand{\gk}{\mathrm{g}_k}
\newcommand{\gll}{\mathrm{g}_{2l}}
\newcommand{\Cln}{C_{\ell,n}}
\newcommand{\Ctwo}{{(\C^2)}}
\newcommand{\oando}{\boldsymbol{\theta},\lb}
\newcommand{\Ch}{\mathfrak{C}}
\newcommand{\Alf}{\mathfrak{A}}
\newcommand{\ssf}{\mathcalb{s}}
\newcommand{\tkk}{\mathfrak{t}}
\newcommand{\Ssf}{\mathcalb{S}}
\newcommand{\ttf}{\mathcalb{t}}
\newcommand{\CP}{\mathcal{CP}}

\newcommand{\qphoe}{\textphnc{q}}
\newcommand{\Sphoe}{\textphnc{\As}}
\newcommand{\hethphoe}{\textphnc{\Ahd}}


\DeclareRobustCommand*{\Sagelogo}{
  \begin{tikzpicture}[line width=.2ex,line cap=round,rounded corners=.01ex,baseline=-.2ex]
    \draw(0,0) -- (.75em,0)
      -- (.75em,.7ex) -- (.25em,.7ex)
      -- (.25em,.75\ht\strutbox) -- (.75em,.75\ht\strutbox);
    \draw(2em,0) -- (1.6em,0)
      -- (1.3em,.75\ht\strutbox) -- (.9em,.75\ht\strutbox)
      -- (.9em,0) -- (1.3em,0)
      -- (1.6em,.75\ht\strutbox) -- (2.1em,.75\ht\strutbox)
      -- (2.1em,-\dp\strutbox) -- (1.45em,-\dp\strutbox);
    \draw(3em,0) -- (2.25em,0)
      -- (2.25em,.75\ht\strutbox) -- (2.8em,.75\ht\strutbox)
      -- (2.8em,.7ex) -- (2.35em, .7ex);
  \end{tikzpicture}
}
\maketitle


\section*{Abstract}
In this article, we establish combinatorial links between the irreducible components of the fixed point locus of the Gieseker variety and the block theory of Ariki-Koike algebras. First, we describe the fixed point locus in terms of Nakajima quiver varieties over the McKay quiver of type A. We then reinterpret the dimension of an irreducible component as double the weight of a block. Cores of charged multipartitions have been defined by Fayers and further developed by Jacon and Lecouvey. In addition, we give a new way to compute the multicharge associated with the core of a charged multipartition. Finally, we also explain how the notion of core blocks, defined by Fayers, is interpreted on the geometric side using the deep connection between quiver varieties and affine Lie algebras.

\section*{Introduction}
Ariki-Koike algebras, introduced by Susumu Ariki and Kazuhiko Koike \cite{AK94}, is a class of Hecke algebras associated with the complex reflection group ${(\Z/r\Z)^n \rtimes \Sk_n}$ generalising the Hecke algebra for the symmetric group. The representation theory of Ariki-Koike algebras has deep connections with the representation theory of quantum groups conjectured in \cite{LLT96} and proven in \cite{A96}, rational Cherednik algebras \cite[section 5]{GGOR} and cyclotomic Khovanov-Lauda-Rouquier algebras \cite{R08, BK09}. One of the central features of the representation theory of Ariki-Koike algebras which has been heavily developed is its block theory \cite{Fay07, J17, Fay19, Ros19, JL19, Lyle22, DJ24}. A block is an indecomposable two-sided ideal. It plays a crucial role in understanding the modular representation theory of these algebras. A strong motivation is the computation of decomposition matrices. Being cellular algebras, Ariki-Koike algebras have a special class of representations referred to as cellular \cite[section 5]{GL96}. The decomposition matrices store the isotypic multiplicities of the cellular modules (which are all irreducible in characteristic zero) called decomposition numbers. Note that to each block, we can associate a decomposition matrix which is a block of the decomposition matrix of the entire algebra. Thus, the decomposition matrix is a block diagonal matrix which reduces the computation to the blocks. Decomposition matrices present such a great interest because they entirely control the representation theory of Ariki-Koike algebras.\\
This article is the first step towards a geometric approach to the block theory of Ariki-Koike algebras. We propose to link it with the irreducible components of the fixed point locus of the Gieseker spaces. This is the family of symplectic algebraic varieties constructed as Nakajima quiver varieties over the Jordan quiver. It can be interpreted as a generalisation of the punctual Hilbert scheme in the plane.\\
\indent In the first section, we will define the Gieseker space denoted by $\Gie(n,r)$, for each pair of integers $(n,r)$. This quiver variety is naturally equipped with an action of ${\GL_2(\C) \times \GL_r(\C)}$. Let $\mathcal{T}$ be the product of the diagonal torus of $\SL_2(\C)$ and of the diagonal torus of $\GL_r(\C)$. If $\ell$ is a given integer and $\ssf \in ({\Z/\ell\Z})^r$, consider $\Gamma_{\ssf} < \mathcal{T}$ the cyclic group of order $\ell$, where the part in the diagonal torus of $\GL_r(\C)$ is embedded using $\ssf$.  In the second section, we describe the irreducible components of $\Gie(n,r)^{\Gamma_{\ssf}}$ in terms of quiver varieties over the McKay quiver of the cyclic group. We prove the following theorem.

\begin{theoremi}
\label{decomp}
For all non-zero integers $n$, $r$ and $\ell$ and for each $\ssf \in \left(\Z/\ell\Z\right)^r$, the scheme of $\Gamma_{\ssf}$-fixed points in the Gieseker space decomposes into irreducible components
\begin{equation*}
\Gie(n,r)^{\Gamma_{\ssf}} = \coprod_{d \in \mathcal{A}^n_{\Gamma, \ssf}}{\QV_d^{\Gamma_{\ssf}}}
\end{equation*}
\noindent where $\mathcal{A}^n_{\Gamma, \ssf} \subset \{ \text{characters of $n$-dimensional representations of } \Gamma_{\ssf}\}$ and $\QV_d^{\Gamma_{\ssf}}$ is isomorphic to the quiver variety over the McKay quiver with dimension parameter $d$.
\end{theoremi}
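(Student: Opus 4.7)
The plan is to realise $\Gie(n,r)$ as the GIT quotient $\mu^{-1}(0)^{ss}/\GL_n(\C)$ of the framed Jordan-quiver representation space, to lift the $\Gamma_\ssf$-action from the quotient to this ambient space using the prescribed embedding into $\mathcal{T}$, and then to exploit a McKay-type correspondence to reinterpret $\Gamma_\ssf$-equivariant framed Jordan-quiver data as representations of the cyclic type $A$ McKay quiver of $\Gamma_\ssf$. This is the natural extension to arbitrary framing of Nakajima's classical description of the $\Gamma$-fixed locus of $\mathrm{Hilb}^n(\C^2)$.

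First I would invoke the standard equivariantisation lemma for diagonalisable group actions on Nakajima GIT quotients: a point $[x] \in \Gie(n,r)$ is $\Gamma_\ssf$-fixed if and only if, after replacing $x = (B_1, B_2, i, j)$ by a $\GL_n(\C)$-translate, the underlying vector space $\C^n$ can be endowed with a $\Gamma_\ssf$-representation structure making $x$ itself $\Gamma_\ssf$-equivariant. The isomorphism class of this $\Gamma_\ssf$-module structure on $\C^n$ depends only on $[x]$ and defines its character $d$, an element of the representation ring of $\Gamma_\ssf$ of total dimension $n$. The locus $\QV_d^{\Gamma_\ssf}$ is by definition the set of fixed points whose attached character is $d$, yielding at once the claimed disjoint decomposition of $\Gie(n,r)^{\Gamma_\ssf}$.

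Next, I would fix $d$ and apply the usual dictionary translating $\Gamma_\ssf$-equivariant framed Jordan-quiver representations of character $(d, \ssf)$ into representations of the framed McKay quiver of $\Gamma_\ssf$, with dimension vector and framing vector determined by $d$ and $\ssf$. The moment map equation splits into the moment map equations of the McKay quiver, the $\GL_n(\C)$-action restricts to $\prod_{\chi \in \widehat{\Gamma_\ssf}} \GL_{d_\chi}(\C)$, and the Gieseker stability restricts to a stability condition on the McKay quiver representation space with a certain induced parameter. Taking GIT quotients identifies $\QV_d^{\Gamma_\ssf}$ with a quiver variety over the McKay quiver of dimension vector $d$; one then defines $\mathcal{A}^n_{\Gamma, \ssf}$ as the set of those $d$ for which this variety is non-empty. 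The main obstacle will be precisely this last step: verifying that the stability parameter induced from the Gieseker GIT data is still generic in the appropriate chamber of the McKay quiver's stability space, so that each non-empty $\QV_d^{\Gamma_\ssf}$ is smooth, connected, and hence irreducible — after which Nakajima's irreducibility result for quiver varieties of affine type $A$ with generic stability closes the argument.
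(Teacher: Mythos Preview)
Your plan is essentially the same strategy the paper follows: lift fixed points to $\Gamma$-equivariant framed Jordan data via the free $\GL_n(\C)$-action on the semistable locus, extract the character $d$ of the resulting $\Gamma$-module on $\C^n$, and identify the $d$-locus with a McKay quiver variety. Two points deserve comment, though.

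First, you gloss over why the pieces $\QV_d^{\Gamma_\ssf}$ are \emph{closed}, not merely disjoint. Disjointness of the $d$-loci is immediate, but closedness is what makes them irreducible components rather than just constructible pieces. The paper handles this by building an actual morphism $\tau_\ssf\colon \Gie(n,r)^{\Gamma_\ssf} \to \RepGn\sslash\GL_n(\C)$ (using that $\tilde p_\ssf$ is an isomorphism onto $\pi^{-1}(\Gie(n,r)^{\Gamma_\ssf})$, which in turn uses freeness of the $\GL_n$-action and smoothness of the fixed locus) and then exhibiting $\QV_d^{\Gamma_\ssf}$ as the preimage of the closed point $\mathcal C_d$. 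Your plan should include this step explicitly.

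Second, you identify the ``main obstacle'' as checking genericity of the induced stability parameter. In the paper this is not treated as the crux: the induced parameter is simply the constant character $\boldsymbol{1}$, and irreducibility of the resulting McKay quiver variety is imported from a cited source rather than deduced from a chamber argument. The paper's logic is also slightly reversed from yours: irreducibility of $\QV_d^{\Gamma_\ssf}$ comes from its being the image of an irreducible variety, while smoothness follows a posteriori from its being a component of the smooth scheme $\Gie(n,r)^{\Gamma_\ssf}$, not from genericity of the parameter.
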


In the third section we give another description of the indexing set $\mathcal{A}^n_{\Gamma, \ssf}$ in terms of elements of the root lattice of the affine Lie algebra of type $A_{\ell}$ and recall some combinatorics on multipartitions and abaci.\\
\indent In the fourth section, we recall the block theory of Ariki-Koike algebras. Nakajima and Yoshioka \cite{NakYo03} proved that the points of $\Gie(n,r)^\mathcal{T}$ are parametrized by $r$-multipartitions of size $n$. For $\lbb$ an $r$-multipartition of size $n$, denote by $I_{\lbb}$ the associated $\mathcal{T}$-fixed point.\\
\indent The fifth section contains the main results that link the irreducible components of the scheme $\Gie(n,r)^{\Gamma_{\ssf}}$ with the block theory of Ariki-Koike algebras.

\begin{proposition1}
For each $r$-multipartition $\lbb$, the dimension of the irreducible component containing $I_{\lbb}$ is double the weight of the block of the Ariki-Koike algebra containing the Specht module indexed by $\lbb$.
\end{proposition1}

\begin{proposition2}
Two points $I_{\lbb},I_{\mub} \in \Gie(n,r)^{\mathcal{T}}$ are in the same $\Gamma_{\ssf}$-irreducible component if and only if $\lbb$ and $\mub$ are in the same block of the Ariki-Koike algebra $H_{n,r}(\ell,\ssf)$.
\end{proposition2}

\noindent In \cite{Fay19}, the author has generalised the notion of $\ell$-core to charged multipartitions and the combinatorial structure of this notion has been made explicit in \cite{JL19}. The $\ell$-core of an $\ell$-charged $r$-multipartition $(\lbb,\ssf)$ is another $\ell$-charged $r$-multipartition $(\gammab,\ssf')$. In general, $\ssf'$ is not equal to $\ssf$. One can associate a maximal dominant weight $\Lambda^+_{\lbb,\ssf}$ of an irreducible representation of the affine Lie algebra of type $A_{\ell}$ to $(\lbb,\ssf)$. We will prove that $\ssf'$ can be easily computed using the irreducible components of $\Gie(n,r)^{\Gamma_{\ssf}}$.
\begin{theoremii}
For each $r$-multipartition $\lbb$ and $\ell$-multicharge $\ssf \in {(\Z/\ell\Z)}^r$,
\begin{equation*}
\Lambda^{\ssf'} \equiv \Lambda^+_{\lbb,\ssf} \quad [\delta^{\Gamma}]
\end{equation*}
\noindent where $(\gammab,\ssf')$ is the $\ell$-core of $(\lbb,\ssf)$.
\end{theoremii}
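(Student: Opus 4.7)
The plan is to combine Proposition b with the identification of $\mathcal{A}^n_{\Gamma,\ssf}$ with elements of the affine root lattice that was set up in Section 3. By Theorem A, the irreducible component of $\Gie(n,r)^{\Gamma_{\ssf}}$ containing $I_{\lbb}$ is of the form $\QV^{\Gamma_{\ssf}}_d$ for a unique $d\in \mathcal{A}^n_{\Gamma,\ssf}$. Viewed inside the affine root lattice of type $A_{\ell-1}^{(1)}$, this $d$ is precisely the defect $\Lambda^{\ssf}-\Lambda^+_{\lbb,\ssf}$: this is the standard dictionary between dimension vectors of Nakajima quiver varieties and weights of integrable highest-weight modules, combined with the residue description of $\Lambda^+_{\lbb,\ssf}$ via the $\ell$-abacus of $(\lbb,\ssf)$.

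Next, I would analyse the $\ell$-core side. The charged multipartition $(\gammab,\ssf')$ produces the $\mathcal{T}$-fixed point $I_{\gammab}\in \Gie(|\gammab|,r)^{\Gamma_{\ssf'}}$. The defining property of the Fayers--Jacon--Lecouvey $\ell$-core is that no $\ell$-ribbon can be removed from $(\gammab,\ssf')$; by the abacus description in Section 3 this translates into the vanishing of the associated root lattice element. Consequently $I_{\gammab}$ lies in the trivial irreducible component $\QV^{\Gamma_{\ssf'}}_{0}$, so that the defect $\Lambda^{\ssf'}-\Lambda^+_{\gammab,\ssf'}$ is zero, i.e.\ $\Lambda^+_{\gammab,\ssf'}=\Lambda^{\ssf'}$.

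To conclude, I would invoke Proposition b: since $(\lbb,\ssf)$ and $(\gammab,\ssf')$ lie in the same block of the corresponding Ariki-Koike algebra, their maximal dominant weights coincide modulo the null root $\delta^{\Gamma}$, because the block is parametrised by the maximal dominant weight only up to adding multiples of $\delta^{\Gamma}$ (reflecting the mod-$\ell$ ambiguity of the multicharge and the invariance of the irreducible component under translation by $\delta^{\Gamma}$ in the root lattice description). Combining the two steps gives
\begin{equation*}
\Lambda^{\ssf'}=\Lambda^+_{\gammab,\ssf'}\equiv \Lambda^+_{\lbb,\ssf}\quad[\delta^{\Gamma}],
\end{equation*}
as required.

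The main obstacle is the rigorous matching in the first step between the dimension vector $d\in\mathcal{A}^n_{\Gamma,\ssf}$ labelling the irreducible component of $I_{\lbb}$ and the concrete weight-theoretic defect $\Lambda^{\ssf}-\Lambda^+_{\lbb,\ssf}$. This requires transporting the Nakajima--Yoshioka combinatorics of $\mathcal{T}$-fixed points through the $\Gamma_{\ssf}$-decomposition of Theorem A, and checking that it agrees with the residue/abacus computation of $\Lambda^+_{\lbb,\ssf}$ used by Fayers and Jacon--Lecouvey. Once this dictionary is in place, the identification of cores with the trivial component and the block-theoretic reformulation from Proposition b make the conclusion essentially formal.
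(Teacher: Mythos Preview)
Your third step contains the essential gap. Proposition~b compares two $r$-multipartitions carrying the \emph{same} multicharge $\ssf$ and the same size $n$; it says nothing about $(\lbb,\ssf)$ versus $(\gammab,\ssf')$, which in general have different charges and different sizes and therefore live in different Ariki--Koike algebras $H_{n,r}(\ell,\ssf)$ and $H_{|\gammab|,r}(\ell,\ssf')$. There is no block containing both, and nothing in the paper gives $\Lambda^+_{\lbb,\ssf}\equiv\Lambda^+_{\gammab,\ssf'}\ [\delta^{\Gamma}]$ for free across a change of charge. Establishing precisely this congruence is the whole content of the theorem, so invoking Proposition~b here is circular.

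The paper handles this by tracking a single $\ell$-elementary operation at a time. It passes through Uglov's map $\tau$ to level one, where one elementary operation on $(\lbb,\ssf)$ becomes the removal of an $\ell$-hook from the partition $\tau(\sigma_{\ssf}.(\lbb,\varphi^r_{\ell}(\ssf)))$. Lemma~\ref{prop 2.22} relates $\Res_{\ell}(\lbb,\ssf)$ to the level-one residue, and the explicit computation of Lemma~\ref{lemme somme} (expressing $\Lambda^{\ssf}+\Res_{\ell}(\tau(\emptyset,\mathfrak{t}))$ modulo $\delta^{\Gamma}$) shows that after applying the same Weyl element $\omega$ the change in residue is exactly compensated by the change from $\Lambda^{\ssf}$ to the new $\Lambda^{\ssf'}$. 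This yields $\Lambda^+_{\mub,\ssf'}\equiv\Lambda^+_{\lbb,\ssf}\ [\delta^{\Gamma}]$ after one step, and by iteration down to the core. The endgame then uses $\Omega_{\ell}(\gammab,\ssf')=0$ together with Propositions~\ref{dim easy} and~\ref{weight dim} to force $\alpha_{\gammab,\ssf'}=0$ and $k_{\gammab,\ssf'}=0$, i.e.\ $\Lambda^+_{\gammab,\ssf'}=\Lambda^{\ssf'}$.

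Two smaller corrections. In your first step, the dimension vector labelling the component of $I_{\lbb}$ is $d=\Res_{\ell}(\lbb,\ssf)$, not $\Lambda^{\ssf}-\Lambda^+_{\lbb,\ssf}$; the latter is $\alpha_d$, obtained only after applying a Weyl element and stripping off $k_d\delta^{\Gamma}$. In your second step, the core need not lie in the component with $d=0$: $d=\Res_{\ell}(\gammab,\ssf')$ is typically nonzero, but the associated quiver variety is zero-dimensional because the weight of the core vanishes.
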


\noindent Finally, Fayers \cite{Fay07} has introduced the notion of core blocks of the Ariki-Koike algebra which can be thought of as an analogue of simple blocks of the Iwahori–Hecke algebra. We prove the following characterisation of such blocks.
\begin{theoremiii}
Let $B$ be a block of $H_{n,r}(\ell,\ssf)$. The following are equivalent.
\begin{itemize}
\item $B$ is a core block.
\item For any $r$-multipartition $\lbb$ in $B$, the $(\ell,\ssf)$-residue is a maximal weight of the irreducible representation of the affine Lie algebra of type $A_{\ell}$ associated with $\ssf$.
\end{itemize}
\end{theoremiii}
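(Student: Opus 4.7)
The plan is to translate both sides of the equivalence through the dictionary established by Theorems~A,~B and Proposition~b, reducing the statement to a comparison between the Fayers--Jacon--Lecouvey notion of ``$(\lbb,\ssf)$ is its own $\ell$-core'' and ``$\mu_\lbb$ is the top of its $\delta^\Gamma$-string in $L(\Lambda^\ssf)$''. Fix $\lbb \in B$ and let $d \in \mathcal{A}^n_{\Gamma,\ssf}$ be the unique index such that $I_\lbb \in \QV_d^{\Gamma_\ssf}$; by Proposition~b this $d$ depends only on $B$, so the statement is really a statement about the irreducible component $\QV_d^{\Gamma_\ssf}$. Under the Nakajima realisation of $L(\Lambda^\ssf)$ over the McKay quiver invoked in Theorem~A, the vector $d$ corresponds to a weight $\mu_d$ which is precisely the $(\ell,\ssf)$-residue of $\lbb$; by Theorem~B, the maximal dominant weight $\Lambda^+_{\lbb,\ssf}$ above $\mu_d$ is congruent to $\Lambda^{\ssf'}$ modulo $\delta^\Gamma$, where $(\gammab, \ssf')$ is the $\ell$-core of $(\lbb,\ssf)$.

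The geometric half of the argument is to show that $\mu_d$ is a maximal weight of $L(\Lambda^\ssf)$ if and only if there is no $d' \in \mathcal{A}^{n-\ell}_{\Gamma,\ssf}$ with $d = d' + \delta^\Gamma$, i.e.\ no ``smaller'' irreducible component of $\Gie(n-\ell,r)^{\Gamma_\ssf}$ realises the preceding weight $\mu_d + \delta^\Gamma$ on the $\delta^\Gamma$-string. This follows from Nakajima's correspondence: the non-empty fixed-point components $\QV_{d}^{\Gamma_\ssf}$ are in bijection with the weights of $L(\Lambda^\ssf)$ of the relevant level, and maximality of $\mu_d$ is by definition the assertion that $\mu_d + \delta^\Gamma$ is not a weight.

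The combinatorial half is to unpack Fayers' definition: $B$ is a core block iff every $\lbb \in B$ equals its own Fayers--Jacon--Lecouvey $\ell$-core, i.e.\ $(\lbb,\ssf) = (\gammab, \ssf')$. Using the Jacon--Lecouvey abacus description, this amounts to saying that no ``generalised $\ell$-hook'' can be removed from $(\lbb, \ssf)$. Each such hook removal produces a charged multipartition $(\mub,\tkk)$ with $|\mub| = |\lbb| - \ell$ whose weight in $L(\Lambda^\ssf)$ equals $\mu_\lbb + \delta^\Gamma$, so the existence of a removable hook is equivalent to the existence of the smaller component $d'$ appearing in the geometric half. Combining the two halves yields: $B$ is a core block iff no such $d'$ exists iff $\mu_d$ is a maximal weight, which is the claimed equivalence.

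The hard part will be the combinatorial--geometric matching at the final step: one must verify that every Fayers--Jacon--Lecouvey $\ell$-hook removal corresponds exactly to a descent of the dimension vector by $\delta^\Gamma$, and conversely that every $d' \in \mathcal{A}^{n-\ell}_{\Gamma,\ssf}$ with $d = d' + \delta^\Gamma$ is realised by such a hook removal. The delicate point is that a Fayers hook removal may alter the multicharge $\ssf$ to some $\tkk$, whereas the weight-lattice operation $\mu \mapsto \mu + \delta^\Gamma$ keeps $\ssf$ fixed; reconciling the two requires a careful tracing of the abacus machinery through the Fock space embedding into $L(\Lambda^\ssf)$ and the congruence $\Lambda^{\ssf'} \equiv \Lambda^+_{\lbb,\ssf} \pmod{\delta^\Gamma}$ supplied by Theorem~B.
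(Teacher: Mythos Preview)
Your geometric half is essentially the paper's approach: ``$\Lambda^{\ssf}-d$ is a maximal weight'' is exactly the condition $k_B=0$, and the paper proves the theorem via the chain
\[
\text{core block} \;\Longleftrightarrow\; \text{no $r$-cycle} \;\Longleftrightarrow\; k_B=0 \;\Longleftrightarrow\; \text{residue maximal}.
\]
Your identification of maximality with the non-existence of $d'=d-\delta^{\Gamma}$ in $\mathcal{A}^{n-\ell}_{\Gamma,\ssf}$ is correct and is precisely the last equivalence.

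The combinatorial half, however, contains a genuine error. You assert that ``$B$ is a core block iff every $\lbb\in B$ equals its own Fayers--Jacon--Lecouvey $\ell$-core''. This is false: $(\lbb,\ssf)$ is its own Jacon--Lecouvey $\ell$-core iff no $\ell$-elementary operation is possible, which by \cite[Theorem~3.7]{JL19} is equivalent to $\Omega_{\ell}(\lbb,\ssf)=0$. But core blocks may have strictly positive weight as soon as $r\geq 2$; e.g.\ for $\ell=2$, $r=2$, $\ssf=(0,0)$, the block containing $((1),\emptyset)$ is a core block of weight~$1$. The condition singled out by Theorem~C is $k_B=0$, not $\Omega_\ell=0$; these coincide only when $r=1$.

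Relatedly, a single Jacon--Lecouvey $\ell$-elementary operation does \emph{not} give $|\mub|=|\lbb|-\ell$ nor shift the residue by $\delta^{\Gamma}$: it changes the multicharge. The object that does both is an \emph{$r$-cycle} (a sequence of $r$ elementary operations returning the charge to $\ssf$), and it is exactly the existence of an $r$-cycle that is equivalent to $d-\delta^{\Gamma}\in\mathcal{A}^{n-\ell}_{\Gamma,\ssf}$. Once you replace ``generalised $\ell$-hook'' by ``$r$-cycle'', the worry you flag at the end (that the charge might change) evaporates, because $r$-cycles fix $\ssf$ by definition. What remains, and what you have not supplied, is the equivalence ``$B$ is a core block $\Longleftrightarrow$ no $r$-cycle can be removed from any $\lbb\in B$''. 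This is the substantive step; the paper proves it by invoking Fayers' abacus characterisation \cite[Theorem~3.1]{Fay07} of core blocks via the integers $\mathfrak{b}^{\mathfrak{s}}_{i,j}(\lbb)$, and this ingredient is missing from your outline.
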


\section*{Acknowledgments}
The author would like to thank Gwyn Bellamy for suggesting the generalisation to Gieseker spaces and Nicolas Jacon for many fruitful discussions. Finally, the author would also like to thank Marien Hanot for his moral support and stimulating exchanges.

\section{The Gieseker space}

The Jordan quiver, denoted by $Q_{\bullet}$, is the following quiver
\begin{figure}[H]
  \centering
  \begin{tikzpicture}
    \draw[fill = black] (-0.7 ,0) circle (0.08);
    \draw ([shift=(160 : 0.7)]0, 0) arc (160 : 0: 0.7) node [anchor = west] {$\alpha$};
    \draw[->] (0.7, 0)  arc (0 : -160 :  0.7);
  \end{tikzpicture}
  \label{jordan quiver}.
\end{figure}
\noindent Let $\overline{Q_{\bullet^f}}$ be the double framed quiver of $Q_{\bullet}$
\begin{figure}[H]
  \centering
  \begin{tikzpicture}
    \draw[fill = black] (-0.7 ,0) circle (0.08);
    \draw[fill = black] (-2.5 ,0) circle (0.08) node [anchor = east] {$\infty$};
    \draw ([shift=(160 : 0.7)]0, 0) arc (160 : 0: 0.7) node [anchor = west] {$\alpha$};
    \draw ([shift=(-150 : 0.5)]0, 0) arc (-150 : 0: 0.5) node [anchor = east] {$\beta$};
    \draw (-0.9, 0.2) arc (0 : 90: 0.6 and 0.3 ) node [anchor = south] {$v^2$};
    \draw (-2.4, -0.2) arc (-180 : -90: 0.6 and 0.3 ) node [anchor = north] {$v^1$};
    \draw[->] ([shift=(90:0.5)]-1.5, 0)  arc (90 : 170 :  0.9 and 0.3);
    \draw[->] ([shift=(-90:0.5)]-1.8, 0)  arc (-90 : 0 :  0.9 and 0.3);
    \draw ([shift=(-150 : 0.5)]0, 0) arc (-150 : 0: 0.5) node [anchor = east] {$\beta$};
    \draw[->] (0.5, 0)  arc (0 : 150 :  0.5);
    \draw[->] (0.7, 0)  arc (0 : -160 :  0.7);
  \end{tikzpicture}
  \label{double jordan quiver}.
\end{figure}
\noindent Fix two integers $n$ and $r$ greater or equal than $1$.\\
Let
\begin{equation*}
\RepQ^{\bullet}(n,r):=M_n(\C)^2 \oplus M_{(n,r)}(\C) \oplus M_{(r,n)}(\C)
\end{equation*}
be the representation space of the Jordan quiver with fixed dimensions $n$ and $r$ for the framing. This vector space is equipped with the following $\GL_n(\C)$-action. For all $(g,(\alpha,\beta,v^1,v^2)) \in \GL_n(\C) \times \RepQ^{\bullet}(n,r)$,
\begin{equation}
\label{GLn act}
g.(\alpha,\beta,v^1,v^2):= (g\alpha g^{-1}, g\beta g^{-1}, gv^1, v^2g^{-1}).
\end{equation}
This $\GL_n(\C)$-action is Hamiltonian and the momentum map is given by
\begin{center}
$\mu_{n,r}:\begin{array}{ccc}
\RepQ^{\bullet}(n,r) & \to & M_n(\C)\\
(\alpha,\beta,v^1,v^2) & \mapsto & [\alpha,\beta] + v^1v^2 \\
\end{array}$.
\end{center}

\noindent  If $\theta \in \mathbb{Z}$, define $\chi_{\theta}:\GL_n(\C) \to \C^{\times}, g \mapsto \mathrm{det}(g)^{\theta}$. For $\theta \in \mathbb{Q}$, there exists $N$ such that $N\theta \in \mathbb{Z}$. Denote by $\tilde{\Gie}_{\theta}(n,r):={\mu_{n,r}^{-1}(0)}^{\theta-ss}$ the $\chi_{N\theta}$ semistable points of $\mu_{n,r}^{-1}(0)$. The Nakajima quiver variety attached to the double framed Jordan quiver with stability parameter $\theta$ will be denoted by $\Gie_{\theta}(n,r) \simeq \tilde{\Gie}_{\theta}(n,r) \sslash \GL_n(\C)$.

\begin{rmq}
Note that for $\theta \neq 0$, $\Gie_{\theta}(n,1)$ is isomorphic to the Hilbert scheme of $n$ points in $\C^2$. Thanks to \cite[Lemma $10.29$ \& Theorem $11.18$]{Kir06}, $\Gie_{\theta}(n,r) \simeq \Gie_{\theta'}(n,r)$, whenever $(\theta,\theta') \in {(\Q^{\times})}^2$ .
\end{rmq}

\begin{deff}
The Gieseker space is the algebraic variety $\Gie(n,r):=\Gie_1(n,r)$. Denote moreover $\tilde{\Gie}_{1}(n,r)$ by $\tilde{\Gie}(n,r)$.
\end{deff}

\noindent One has the following lemma.

\begin{lemme}
\label{free action}
The group $\GL_n(\C)$ acts freely on $\tilde{\Gie}(n,r)$.
\end{lemme}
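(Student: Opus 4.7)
The plan is to unpack the $\chi_\theta$-semistability condition with $\theta=1$ via King's criterion for quiver representations, show that at such a point semistability already forces the usual ADHM cyclicity condition, and then verify that the stabilizer must be trivial.

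First, I would record the explicit geometric reformulation of semistability. Using King's theorem applied to the double framed Jordan quiver (with the framing vertex $\infty$ treated so that the stability weight at $\infty$ cancels that at the nonframed vertex), a point $(\alpha,\beta,v^1,v^2)\in \mu_{n,r}^{-1}(0)$ is $\chi_1$-semistable if and only if there is no proper subspace $V'\subsetneq \C^n$ containing $v^1(\C^r)$ and stable under both $\alpha$ and $\beta$. In particular for $\theta=1$ the notions of semistability and stability coincide (the only subrepresentations of the framed quiver representation to test are those with trivial framing, and the slope inequality is strict unless $V'=\C^n$), so every point of $\tilde\Gie(n,r)$ is in fact $\chi_1$-stable.

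Second, I would translate the condition that $g\in \GL_n(\C)$ stabilizes $(\alpha,\beta,v^1,v^2)\in \tilde\Gie(n,r)$ using the action formula \eqref{GLn act}. The four resulting identities $g\alpha g^{-1}=\alpha$, $g\beta g^{-1}=\beta$, $gv^1=v^1$ and $v^2g^{-1}=v^2$ say exactly that $g$ commutes with both $\alpha$ and $\beta$, that $g$ fixes every vector in the image of $v^1$, and (redundantly for our purposes) that $v^2\circ(g-\id)=0$.

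Third, I would consider the generalized $1$-eigenspace, or simply the fixed subspace $V_1:=\ker(g-\id)\subseteq \C^n$. Since $v^1(\C^r)\subseteq V_1$ by the third equation, and since $\alpha$ and $\beta$ commute with $g$ and therefore preserve $V_1$, the subspace $V_1$ is an $(\alpha,\beta)$-invariant subspace containing $v^1(\C^r)$. By the stability of $(\alpha,\beta,v^1,v^2)$ discussed in the first step, this forces $V_1=\C^n$, i.e.\ $g=\id$. Hence the stabilizer of every point of $\tilde\Gie(n,r)$ is trivial, which is the definition of a free action.

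The only delicate point is the first step: one has to carefully set up King's stability for a framed quiver with the stability vector $(-n,r)$ at $(\text{main vertex},\infty)$ (or its rescaling), and verify that for $\theta=1>0$ the slope inequality really does translate into the cyclicity condition "no proper $(\alpha,\beta)$-invariant subspace contains $\mathrm{Im}(v^1)$". Once this dictionary is in place, the freeness argument is the standard eigenspace-trick and causes no trouble.
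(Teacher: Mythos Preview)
Your proof is correct and follows essentially the same route as the paper, which simply cites \cite[Theorem 10.34]{Kir06} for the fact that all $\chi_1$-semistable points are stable and \cite[Corollary 2.3.8]{Ginz08} for the freeness of the $\GL_n(\C)$-action on stable points. Your argument is an explicit unpacking of these references: King's criterion yields the cyclicity description of stability (this is exactly \cite[Example 10.36]{Kir06}, used elsewhere in the paper), and the eigenspace trick with $\ker(g-\id)$ is the standard proof of freeness that underlies the Ginzburg corollary.
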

\begin{proof}
All points of $\tilde{\Gie}(n,r)$ are stable thanks to \cite[Theorem $10.34$]{Kir06}. The \cite[Corollary $2.3.8$]{Ginz08} gives that $\GL_n(\C)$ acts freely on all stable points of $\mathcal{R}(n,r)$.
\end{proof}

\begin{prop}
The Gieseker space is a smooth algebraic variety of dimension $2nr$.
\end{prop}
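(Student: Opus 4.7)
The plan is to first compute $\dim \RepQ^{\bullet}(n,r)$, then show that $\mu_{n,r}$ is a submersion on the semistable locus so that $\tilde{\Gie}(n,r)$ is smooth and cut out transversally, and finally pass to the quotient using the free action guaranteed by the previous lemma.

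First, I would observe that
\begin{equation*}
\dim \RepQ^{\bullet}(n,r) = 2n^2 + 2nr,
\end{equation*}
since the ambient representation space is the direct sum of two copies of $M_n(\C)$ together with $M_{n,r}(\C) \oplus M_{r,n}(\C)$. The target of $\mu_{n,r}$ is $M_n(\C) \simeq \mathfrak{gl}_n(\C)$, which has dimension $n^2$. If I can show that $d\mu_{n,r}$ is surjective at every semistable point, then $\tilde{\Gie}(n,r) = \mu_{n,r}^{-1}(0)^{1-ss}$ is smooth of dimension $n^2 + 2nr$ at all of its points.

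The surjectivity of $d\mu_{n,r}$ on the semistable locus is the standard consequence of the general identity, valid for any Hamiltonian action: the cokernel of the differential of the moment map at a point $x$ is canonically dual to the Lie algebra of the stabilizer of $x$. By Lemma \ref{free action}, $\GL_n(\C)$ acts freely on $\tilde{\Gie}(n,r)$, so all stabilizers are trivial and $d\mu_{n,r}$ is surjective at every point of $\tilde{\Gie}(n,r)$. I expect this step to require the most care, as it is the only substantive input; everything else is a dimension count.

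Finally, since all points of $\tilde{\Gie}(n,r)$ are stable (as recalled in the proof of Lemma \ref{free action}), the GIT quotient $\tilde{\Gie}(n,r) \sslash \GL_n(\C)$ coincides with the geometric quotient by a smooth free action of a connected linear algebraic group. Consequently $\Gie(n,r)$ is smooth and
\begin{equation*}
\dim \Gie(n,r) = \dim \tilde{\Gie}(n,r) - \dim \GL_n(\C) = (n^2 + 2nr) - n^2 = 2nr,
\end{equation*}
which is the claimed dimension.
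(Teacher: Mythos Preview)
Your proof is correct. The paper's own proof is a one-line citation of a general result (Kirillov, \emph{Quiver Representations and Quiver Varieties}, Theorem~10.35) asserting that a Nakajima quiver variety with generic stability parameter is smooth of dimension $2d\cdot w-\langle d,d\rangle$; for the Jordan quiver with dimension $n$ and framing $r$ this specializes to $2nr$, since the Cartan form vanishes. You instead unpack exactly what that black box amounts to in this case: the standard moment-map/stabilizer duality together with the free action from Lemma~\ref{free action} forces $d\mu_{n,r}$ to be surjective on the semistable locus, and a dimension count finishes. Your route is more self-contained and makes explicit why freeness of the action is the real content; the paper's route is terser and situates the statement as an instance of the general theory, which suits a section devoted to setting up notation.
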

\begin{proof}
Since the stability parameter is generic, we can apply \cite[Theorem $10.35$]{Kir06}, which directly gives the desired result.
\end{proof}

\noindent The Gieseker space is naturally equipped with a $(\GL_2(\C)\times \GL_r(\C))$-action. For all $(g,h,(\alpha,\beta,v^1,v^2))$ in $\GL_2(\C) \times \GL_r(\C) \times \tilde{\Gie}(n,r),$
\begin{equation*}
^{(g,h)}(\alpha,\beta,v^1,v^2):=(a\alpha + b \beta, c\alpha + d \beta, v^1 h^{-1}, \mathrm{det}(g) hv^2)
\end{equation*}
where $g=\begin{pmatrix} a & b \\ c & d \end{pmatrix}$. This action commutes with the $\GL_n(\C)$-action defined in (\ref{GLn act}) and induces an action on $\Gie(n,r)$.

\vspace*{0.25cm}

\noindent Let us now take $\ell \geq 1$ an integer and denote by $\Gamma$ the cyclic group of order $\ell$ contained in the maximal diagonal torus $\T_1$ of $\SL_2(\C)$. Let ${\omega_{\ell}:=\mathrm{diag}(\zeta_{\ell},\zeta_{\ell}^{-1}) \in \Gamma}$, where $\zeta_{\ell}$ is a primitive $\ell^{\text{th}}$ root of unity. Denote by $I_{\Gamma}$ the set of all irreducible characters of $\Gamma$. This set is finite since $\Gamma$ is a finite group. For each irreducible character $\chi$ of $\Gamma$, let $X_{\chi}$ be the $\Gamma$-module of character $\chi$. Let $\chi_0 \in I_{\Gamma}$ denote the trivial character and more generally define
\begin{equation*}
\chi_i: \begin{array}{ccc}
\Gamma & \to & \C \\
\omega_{\ell}^k & \mapsto & \zeta_{\ell}^{ik}
\end{array}, \qquad \forall i \in \Z/\ell\Z.
\end{equation*}

\noindent Let $X_{\std}$ be the $\Gamma$-module obtained as the restriction to $\Gamma$ of the standard $\GL_2(\C)$ representation $\C^2$. Let $\chi_{\std}$ denote its character. Note that $\chi_{\std}=\chi_{\smin 1}+\chi_{1}$.

\noindent Fix $\ssf \in \left(\mathbb{Z}/\ell\mathbb{Z}\right)^r$ and denote by ${t_{\ssf}:=\mathrm{diag}(\zeta_{\ell}^{\ssf_i}) \in \GL_r(\C)}$. One can then consider the following  group morphism
\begin{equation*}
\sigma^{\ssf}_r:
\begin{array}{ccc}
\Gamma & \to & \GL_r(\C)\\
\omega_{\ell} & \mapsto & t_{\ssf}.
\end{array}
\end{equation*}
This morphism equips $\C^r$ with a structure of $\Gamma$-module. Denote this $\Gamma$-module by $N^{\ssf}$. Furthermore, denote by $\Gamma_{\ssf}$ the cyclic subgroup of $\SL_2(\C) \times \GL_r(\C)$ generated by $(\omega_{\ell}, t_{\ssf})$.

\section{ Irreducible components of $\Gie(n,r)^{\Gamma_{\ssf}}$}
\noindent In this section we will study the irreducible components of the fixed points locus in the Gieseker space under $\Gamma_{\ssf}$. The goal will be to give a description of these irreducible components as Nakajima quiver varieties over the framed McKay quiver associated with $\Gamma$.
This quiver has $I_{\Gamma}$, the set of all irreducible characters of $\Gamma$, as a set of vertices. For $(\chi,\chi') \in I_{\Gamma}^2$, there is an arrow from $\chi$ to $\chi'$ whenever $\langle \chi_{\std}\chi,\chi' \rangle \neq 0$. Since $X_{\std}$ is isomorphic to its dual as a $\Gamma$-module, if there is an arrow from $\chi$ to $\chi'$, then there is an arrow from $\chi'$ to $\chi$. The framed McKay quiver is the McKay where we have added an extra framing vertex for each original vertex. For each original vertex, we have also added two arrows. One from this vertex to its associated framing vertex and another from the framing vertex to the original one.

\begin{ex}
When $\ell=3$, the framed McKay quiver is
\begin{center}
\begin{tikzcd}[row sep=large]
&& \bullet \ar[bend right=15]{d} && \\
&& \bullet_0 \ar[bend left=-15]{dl} \ar[bend right=15]{dr} \ar[bend right=15]{u} && \\
\bullet \ar[bend right=15]{r} & \bullet_1 \ar[bend right=15]{ru} \ar[bend right=15]{rr} \ar[bend right=15]{l}&& \bullet_2 \ar[bend right=15]{lu} \ar[bend right=15]{ll} \ar[bend right=15]{r} & \bullet \ar[bend right=15]{l}
\end{tikzcd}
\end{center}
\noindent where the vertices are labelled by $\Z/\ell\Z \simeq I_{\Gamma}$.
\end{ex}

\noindent In the following two subsections, we will first explain how to construct a representation of the framed McKay quiver out of a fixed point in the Gieseker space. We will then go the other way around i.e. take a representation of the framed McKay quiver and construct a representation of the framed Jordan quiver.

\subsection{Deconstruction}

\noindent Let us start with $\overline{(\alpha,\beta,v^1,v^2)} \in {\Gie(n,r)}^{\Gamma_{\ssf}}$. Let $\tilde{A}_{\alpha,\beta} = e_1 \otimes \alpha + e_2 \otimes \beta$ be an element of $\C^2 \otimes \mathrm{End}(\C^n)$ and denote by $A_{\alpha,\beta}$ the image of $\tilde{A}_{\alpha,\beta}$ though this chain of canonical isomorphisms
\begin{equation*}
\C^2 \otimes \mathrm{End}(\C^n) \iso \Hom(\C,\C^2) \otimes \Hom(\C^n,\C^n) \iso \Hom(\C^n, \C^2 \otimes \C^n).
\end{equation*}

\noindent When no confusion is possible we will write $A$ instead of $A_{\alpha,\beta}$.
For each $x \in \Gamma_{\ssf}$, thanks to Lemma \ref{free action}, there exists a unique $g_{x} \in \GL(\C^n)$ such that
\begin{equation}
\label{equ 1}
^{x}(\alpha, \beta, v^1,v^2) = g_{x}.(\alpha, \beta,v^1,v^2).
\end{equation}

\noindent Consider now the following group morphism
\begin{equation*}
\sigma^{\ssf}_n:
\begin{array}{ccc}
\Gamma & \to & \GL_n(\C)\\
\omega_{\ell} & \mapsto & g_{(\omega_{\ell},t_{\ssf})}^{-1}  \\
\end{array}.
\end{equation*}
This morphism equips $\C^n$ with a structure of $\Gamma$-module. Denote this $\Gamma$-module by $M^{\ssf}$.
Consider the following isomorphism of groups:
\begin{equation*}
D_{\Gamma}^{\ssf}:\begin{array}{ccc}
\Gamma & \to & \Gamma_{\ssf}\\
\omega_{\ell} & \mapsto & (\omega_{\ell},t_{\ssf})  \\
\end{array}.
\end{equation*}

\noindent One can reformulate $(\ref{equ 1})$ as follows
\begin{equation*}
\label{fix_eq}
^{D^{\ssf}_{\Gamma}(\gamma)}(\alpha, \beta, v^1,v^2) = \sigma^{\ssf}_n(\gamma^{-1}).(\alpha, \beta,v^1,v^2), \qquad \forall \gamma \in \Gamma.
\end{equation*}

\begin{lemme}
The morphism $A:M^{\ssf} \to X_{\mathrm{std}}\otimes M^{\ssf}$ is $\Gamma$-equivariant.
\end{lemme}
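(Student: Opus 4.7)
The plan is to unfold the definition of $\Gamma$-equivariance and the fixed-point equation, and then verify the matching by a short direct computation. First I would expand what it means for $(\alpha,\beta,v^1,v^2)$ to be a $\Gamma_{\ssf}$-fixed point in the Gieseker space. Plugging $\omega_\ell = \mathrm{diag}(\zeta_\ell,\zeta_\ell^{-1})$ (so $a=\zeta_\ell$, $d=\zeta_\ell^{-1}$, $b=c=0$, $\det(\omega_\ell)=1$) into the formula for the $(\GL_2(\C)\times\GL_r(\C))$-action yields
\begin{equation*}
{}^{(\omega_\ell, t_{\ssf})}(\alpha,\beta,v^1,v^2) = (\zeta_\ell\alpha,\, \zeta_\ell^{-1}\beta,\, v^1 t_{\ssf}^{-1},\, t_{\ssf} v^2).
\end{equation*}
Writing $g := g_{(\omega_\ell, t_{\ssf})}$ and comparing with the definition of the $\GL_n(\C)$-action, the fixed-point equation then forces $g\alpha g^{-1} = \zeta_\ell \alpha$ and $g\beta g^{-1} = \zeta_\ell^{-1}\beta$. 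Since $\sigma^{\ssf}_n(\omega_\ell) = g^{-1}$ by construction, these rewrite as $\alpha\circ g^{-1} = \zeta_\ell\, g^{-1}\circ \alpha$ and $\beta\circ g^{-1} = \zeta_\ell^{-1}\, g^{-1}\circ \beta$.

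Next I would make $A$ explicit. Tracing through the chain of canonical isomorphisms ${\C^2 \otimes \End(\C^n) \iso \Hom(\C^n, \C^2\otimes\C^n)}$, the morphism $A_{\alpha,\beta}$ sends $m \in M^{\ssf}$ to $e_1\otimes \alpha(m) + e_2\otimes \beta(m)$, where $(e_1,e_2)$ is the canonical basis of $X_{\std}=\C^2$. Because $X_{\std}$ is the restriction to $\Gamma \subset \T_1 \subset \SL_2(\C)$ of the standard representation, the basis vectors are eigenvectors with $\omega_\ell\cdot e_1 = \zeta_\ell e_1$ and $\omega_\ell\cdot e_2 = \zeta_\ell^{-1} e_2$.

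Finally, since $\Gamma = \langle\omega_\ell\rangle$ is cyclic, $\Gamma$-equivariance of $A$ reduces to the single identity $A(g^{-1}m) = (\omega_\ell\otimes g^{-1})(A(m))$ for all $m\in M^{\ssf}$. The left-hand side equals $e_1\otimes \alpha(g^{-1}m) + e_2\otimes \beta(g^{-1}m)$, and the right-hand side equals $e_1\otimes \zeta_\ell g^{-1}\alpha(m) + e_2\otimes \zeta_\ell^{-1} g^{-1}\beta(m)$; the two relations established in the first step match these expressions term by term, which concludes. The only real pitfall is bookkeeping, namely keeping the direction of the canonical isomorphism and the eigenvalue conventions for $\omega_\ell$ on $X_{\std}$ consistent with the sign pattern $(\zeta_\ell, \zeta_\ell^{-1})$ appearing in the action formula; there is no substantive difficulty beyond that.
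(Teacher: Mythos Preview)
Your proof is correct and follows essentially the same approach as the paper: both extract the relations $\alpha g^{-1}=\zeta_\ell\,g^{-1}\alpha$ and $\beta g^{-1}=\zeta_\ell^{-1}\,g^{-1}\beta$ from the fixed-point equation and then combine them via $A(m)=e_1\otimes\alpha(m)+e_2\otimes\beta(m)$ to obtain $\Gamma$-equivariance. You spell out a few more details (the explicit $(\GL_2\times\GL_r)$-action, the eigenvalue conventions on $X_{\mathrm{std}}$, and the final matching of both sides), but the argument is the same.
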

\begin{proof}
For $\omega_{\ell}=\begin{pmatrix}\zeta_{\ell} & 0 \\0 &\zeta_{\ell}^{-1}\end{pmatrix} \in \Gamma$, using equation (\ref{equ 1}), one has the following equalities
\begin{center}
$\begin{cases}
\alpha g_{D^{\ssf}_{\Gamma}(\gamma)}^{-1}  = \zeta_{\ell} g_{D^{\ssf}_{\Gamma}(\gamma)}^{-1}\alpha\\
\beta g_{D^{\ssf}_{\Gamma}(\gamma)}^{-1} =  \zeta_{\ell}^{-1} g_{D^{\ssf}_{\Gamma}(\gamma)}^{-1}\beta
\end{cases}$
\end{center}
which then gives
\begin{center}
$\begin{cases}
e_1 \otimes \alpha g_{D^{\ssf}_{\Gamma}(\gamma)}^{-1}  = \zeta_{\ell}e_1 \otimes g_{D^{\ssf}_{\Gamma}(\gamma)}^{-1} \alpha\\
e_2 \otimes \beta g_{D^{\ssf}_{\Gamma}(\gamma)}^{-1} = \zeta_{\ell}^{-1}e_2 \otimes  g_{D^{\ssf}_{\Gamma}(\gamma)}^{-1}\beta
\end{cases}$
\end{center}
Summing these two equations provides exactly that $A$ is $\Gamma$-equivariant.
\end{proof}

\noindent Consider the morphism
\begin{equation*}
\mathrm{det}: \begin{array}{ccc}
\Xstd \otimes \Xstd & \to_{\Gamma} & X_{\chi_0}\\
\begin{pmatrix} r_1 \\ r_2 \end{pmatrix} \otimes \begin{pmatrix} s_1 \\ s_2 \end{pmatrix} & \mapsto & r_1s_2-r_2s_1 \\
\end{array}.
\end{equation*}

\noindent Define $\Delta^A: \Xstd \otimes M^{\ssf} \to M^{\ssf}$ as the composition of the two following maps
\begin{center}
\begin{tikzcd}
\Xstd \otimes M^{\ssf} \ar[r, "\mathrm{Id} \otimes A"] & \Xstd \otimes \Xstd \otimes M^{\ssf} \ar[r, "\mathrm{det} \otimes \mathrm{Id}"] & M^{\ssf}
\end{tikzcd}
\end{center}

\begin{lemme}
The morphism $\Delta^A$ is $\Gamma$-equivariant.
\end{lemme}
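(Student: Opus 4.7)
The plan is to observe that $\Delta^A$ is built by definition as the composition
\[
\Xstd \otimes M^{\ssf} \xrightarrow{\mathrm{Id} \otimes A} \Xstd \otimes \Xstd \otimes M^{\ssf} \xrightarrow{\mathrm{det} \otimes \mathrm{Id}} X_{\chi_0} \otimes M^{\ssf} \simeq M^{\ssf},
\]
so it suffices to check that each arrow is $\Gamma$-equivariant, because compositions of $\Gamma$-equivariant maps are $\Gamma$-equivariant, and because tensoring a $\Gamma$-equivariant morphism with the identity of a $\Gamma$-module yields again a $\Gamma$-equivariant morphism.

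For the first arrow, I would simply cite the previous lemma, which asserts that $A : M^{\ssf} \to \Xstd \otimes M^{\ssf}$ is $\Gamma$-equivariant; tensoring on the left with $\mathrm{Id}_{\Xstd}$ (viewing $\Xstd$ with its natural $\Gamma$-module structure coming from the restriction of $\SL_2(\C)$) gives $\Gamma$-equivariance of $\mathrm{Id} \otimes A$. For the second arrow, the equivariance of $\mathrm{det}$ is essentially by design (indicated by the subscripted arrow $\to_{\Gamma}$ in its definition), but if one wants to verify it directly, it is enough to evaluate on the generator $\omega_{\ell} = \mathrm{diag}(\zeta_{\ell}, \zeta_{\ell}^{-1})$: the factor $\zeta_{\ell}$ appearing in the first tensor slot is cancelled by $\zeta_{\ell}^{-1}$ in the second, so $r_1 s_2 - r_2 s_1$ is preserved, which is exactly the statement that $\mathrm{det}$ lands in the trivial representation $X_{\chi_0}$. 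Tensoring on the right with $\mathrm{Id}_{M^{\ssf}}$ then yields $\Gamma$-equivariance of $\mathrm{det} \otimes \mathrm{Id}$.

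Composing the two $\Gamma$-equivariant morphisms and using the canonical $\Gamma$-equivariant identification $X_{\chi_0} \otimes M^{\ssf} \simeq M^{\ssf}$ (tensoring with the trivial character leaves the $\Gamma$-action unchanged), I obtain the desired equivariance of $\Delta^A$. There is no substantive obstacle here: the only content is the fact, already recorded in the preceding lemma, that $A$ itself is $\Gamma$-equivariant; everything beyond that is a formal manipulation of tensor products in the category of $\Gamma$-modules.
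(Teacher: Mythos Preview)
Your proof is correct and follows exactly the same approach as the paper's: the paper's one-line proof simply says ``This follows from the equivariance of $A$ and of $\mathrm{det}$,'' and you have merely unpacked this into its evident details.
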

\begin{proof}
This follows from the equivariance of $A$ and of $\mathrm{det}$.
\end{proof}

\noindent In general if $M$ and $N$ are two $\Gamma$ modules, let $\Hom_{\Gamma}(M,N)$ be the set of all $\Gamma$-equivariant morphisms between $M$ and $N$.
\begin{deff}
From the point of view of $\Gamma$-modules, the representation space of the doubled, framed McKay quiver of $\Gamma$ associated with the $\Gamma$-modules $M$ and $N$ is
\begin{equation*}
\RepQ^{\Gamma}_{M,N}:=\Hom_{\Gamma}(X_{\std}\otimes M,M) \oplus \Hom_{\Gamma}(N,M) \oplus \Hom_{\Gamma}(M,N).
\end{equation*}
\noindent The $\Gamma$-module $M$ can be thought of as the "dimension vector" for the McKay quiver and $N$ as the "dimension vector" for the framing.
\end{deff}

\noindent Note that, $v^1$ defines an element $Z_1^v$ of $\Hom_{\Gamma}(N^{\ssf},M^{\ssf})$. Indeed, thanks to $(\ref{equ 1})$:
\begin{equation*}
v^1t_{\ssf}=\sigma_s(\omega_{\ell})v^1.
\end{equation*}
In the same way, $v^2$ defines an element $Z_2^v$ of $\Hom_{\Gamma}(M^{\ssf},N^{\ssf})$. Assembling everything gives the following Proposition.
\begin{prop}
\label{constr rep}
For each $\overline{(\alpha,\beta,v^1,v^2)} \in {\Gie(n,r)}^{\Gamma_{\ssf}}$, $(\Delta^{A_{\alpha,\beta}},Z_1^v,Z_2^v)$ is an element of $\RepQ^{\Gamma}_{M^{\ssf},N^{\ssf}}$.
\end{prop}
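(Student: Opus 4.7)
The proposition is an assembly statement: the preceding lemma already establishes the $\Gamma$-equivariance of $\Delta^{A_{\alpha,\beta}}$, so what remains is to verify the analogous property for $Z_1^v$ and $Z_2^v$ and to observe that the triple then lies in the prescribed direct sum. My plan is to unpack the fixed-point equation (\ref{equ 1}) on the $v^1$- and $v^2$-slots.

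For $Z_1^v$, I compute both sides of (\ref{equ 1}) on the $v^1$-component: the $(\GL_2(\C)\times\GL_r(\C))$-action of $(\omega_{\ell},t_{\ssf})$ sends $v^1$ to $v^1 t_{\ssf}^{-1}$, whereas the $\GL_n(\C)$-action of $g_{(\omega_{\ell},t_{\ssf})}$ sends it to $g_{(\omega_{\ell},t_{\ssf})}v^1$. Equating these and substituting $\sigma^{\ssf}_n(\omega_{\ell})=g_{(\omega_{\ell},t_{\ssf})}^{-1}$ together with $\sigma^{\ssf}_r(\omega_{\ell})=t_{\ssf}$ yields
\begin{equation*}
\sigma^{\ssf}_n(\omega_{\ell}) \, v^1 = v^1 \, \sigma^{\ssf}_r(\omega_{\ell}),
\end{equation*}
which is exactly the $\Gamma$-equivariance of $Z_1^v\colon N^{\ssf}\to M^{\ssf}$ evaluated on the generator $\omega_{\ell}$, hence on all of $\Gamma$. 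The treatment of $Z_2^v$ is symmetric; the only novelty is the $\mathrm{det}(g)$-factor in the $v^2$-slot of the $(\GL_2(\C)\times\GL_r(\C))$-action, but $\mathrm{det}(\omega_{\ell})=1$, so this factor is trivial on $\Gamma$ and the computation goes through unchanged, placing $Z_2^v$ in $\Hom_{\Gamma}(M^{\ssf},N^{\ssf})$.

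Combined with the preceding lemma, this puts the triple $(\Delta^{A_{\alpha,\beta}},Z_1^v,Z_2^v)$ in $\RepQ^{\Gamma}_{M^{\ssf},N^{\ssf}}$, as claimed. I do not expect any serious obstacle: the entire argument is a direct unpacking of definitions using equation (\ref{equ 1}). The only subtle point is the bookkeeping of left versus right actions together with the inverse appearing in the definition of $\sigma^{\ssf}_n$, which is chosen precisely so that the three equivariance conditions land in $\RepQ^{\Gamma}_{M^{\ssf},N^{\ssf}}$ rather than in the representation space of the opposite framed McKay quiver.
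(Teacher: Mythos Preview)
Your proposal is correct and matches the paper's own argument essentially line for line: the paper also cites the preceding lemma for $\Delta^{A_{\alpha,\beta}}$, then unpacks equation (\ref{equ 1}) on the $v^1$- and $v^2$-slots to obtain $v^1 t_{\ssf}=\sigma^{\ssf}_n(\omega_{\ell})v^1$ (and symmetrically for $v^2$), and concludes by ``assembling everything.'' Your additional remark that $\mathrm{det}(\omega_{\ell})=1$ kills the determinant factor in the $v^2$-slot is a small clarification the paper leaves implicit.
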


\subsection{Reconstruction}

Let us turn it the other way around. Take a $\Gamma$-module $M$ of dimension $n$.
Consider the morphism
\begin{center}
$\mathrm{ted}: \begin{array}{ccc}
X_{\chi_0}& \to_{\Gamma} & \Xstd \otimes \Xstd\\
1 & \mapsto & e_2\otimes e_1 - e_1 \otimes e_2  \\
\end{array}$\\
\end{center}
Take $(\Delta,Z_1,Z_2) \in \RepQ^{\Gamma}_{M,N^{\ssf}}$ and let $A_{\Delta}$ be the composition
\begin{center}
\begin{tikzcd}
M \ar[r, "\mathrm{ted}\otimes \mathrm{Id}"] & \Xstd \otimes \Xstd \otimes M \ar[r, "\mathrm{Id} \otimes \Delta"] & \Xstd \otimes M.
\end{tikzcd}
\end{center}
\begin{lemme}
\label{lemme_fix}
The morphism $A_{\Delta}$ is $\Gamma$-equivariant.
\end{lemme}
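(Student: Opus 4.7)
The plan is straightforward: show that $A_{\Delta}$ is $\Gamma$-equivariant by exhibiting it as a composition of two $\Gamma$-equivariant maps, and verify the only non-obvious equivariance, namely that of $\mathrm{ted}$.

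First I would identify $M$ with $X_{\chi_0} \otimes M$ via the canonical isomorphism (which is $\Gamma$-equivariant since $X_{\chi_0}$ is the trivial module). Under this identification, $\mathrm{ted}\otimes \mathrm{Id}: M \to X_{\std}\otimes X_{\std}\otimes M$ is $\Gamma$-equivariant as soon as $\mathrm{ted}$ is. To check the latter, it suffices to verify that $\omega_{\ell}$ fixes the element $e_2\otimes e_1 - e_1\otimes e_2$ of $X_{\std}\otimes X_{\std}$, which is a direct computation: since $\omega_{\ell}\cdot e_1 = \zeta_{\ell}e_1$ and $\omega_{\ell}\cdot e_2 = \zeta_{\ell}^{-1}e_2$, the two scalars cancel in each summand.

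Next, $\mathrm{Id}\otimes \Delta: X_{\std}\otimes X_{\std}\otimes M \to X_{\std}\otimes M$ is $\Gamma$-equivariant because the identity on $X_{\std}$ is trivially so, $\Delta$ is $\Gamma$-equivariant by hypothesis (as $(\Delta,Z_1,Z_2)\in \RepQ^{\Gamma}_{M,N^{\ssf}}$), and the tensor product of two $\Gamma$-equivariant maps is $\Gamma$-equivariant.

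Composing these two equivariant maps gives $A_{\Delta}$, which is therefore $\Gamma$-equivariant. There is no real obstacle here; the only point requiring explicit computation is the equivariance of $\mathrm{ted}$, which amounts to the observation that $\zeta_{\ell}\cdot \zeta_{\ell}^{-1} = 1$. This is precisely the dual statement to the equivariance of $\mathrm{det}$ used in the previous lemma, and indeed $\mathrm{ted}$ is (up to the canonical identification $X_{\std}^{\ast}\simeq X_{\std}$ of $\Gamma$-modules) the transpose of $\mathrm{det}$, so one could alternatively invoke the previous lemma directly.
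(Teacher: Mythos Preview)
Your proof is correct and follows essentially the same approach as the paper: the paper's proof is the single sentence ``Since $\mathrm{ted}$ and $\Delta$ are $\Gamma$-equivariant, $A_{\Delta}$ also is $\Gamma$-equivariant,'' and you have simply unpacked this by explicitly verifying the equivariance of $\mathrm{ted}$ (which the paper takes as given from the notation $\to_{\Gamma}$ in its definition).
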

\begin{proof}
Since $\mathrm{ted}$ and $\Delta$ are $\Gamma$-equivariant, $A_{\Delta}$ also is $\Gamma$-equivariant.
\end{proof}

\noindent Define $(\alpha_\Delta,\beta_\Delta) \in \mathrm{End}(M)^2$ such that for all $m \in M$
\begin{equation*}
A_{\Delta}(m)=e_1 \otimes \alpha_{\Delta}(m) + e_2 \otimes \beta_{\Delta}(m)
\end{equation*}
Recall that $\RepQ^{\bullet}_{M,N^{\ssf}}:=\Hom(M,M)\oplus \Hom(M,M) \oplus \Hom(N^{\ssf},M) \oplus \Hom(M,N^{\ssf})$.
We are now able to define the following linear map
\begin{center}
$\tilde{\iota}_{M,N^{\ssf}}^{\Gamma}: \begin{array}{ccc}
\RepQ^{\Gamma}_{M,N^{\ssf}} & \to & \RepQ^{\bullet}_{M,N^{\ssf}}\\
(\Delta,Z_1,Z_2) & \mapsto & (\alpha_{\Delta} ,\beta_{\Delta},Z_1 ,Z_2)  \\
\end{array}$\\
\end{center}

\begin{lemme}
\label{iota_equiv}
The map $\tilde{\iota}_{M,N^{\ssf}}^{\Gamma}$ is $\Aut_{\Gamma}(M)$-equivariant.
\end{lemme}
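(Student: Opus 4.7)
The plan is to verify the equivariance by unpacking definitions and pushing the automorphism through the relevant tensor factorisations. First, I would specify the $\Aut_{\Gamma}(M)$-action on $\RepQ^{\Gamma}_{M,N^{\ssf}}$: for $g \in \Aut_{\Gamma}(M)$, one sets $g.\Delta = g \circ \Delta \circ (\id_{\Xstd} \otimes g^{-1})$, $g.Z_1 = g \circ Z_1$, and $g.Z_2 = Z_2 \circ g^{-1}$. With this convention, the $Z_1,Z_2$ components of the equivariance statement are immediate, since $\tilde{\iota}^{\Gamma}_{M,N^{\ssf}}$ is the identity on these components and the formulas above are exactly the restriction to $\Aut_{\Gamma}(M) \subset \GL(M)$ of the $\GL_n(\C)$-action in (\ref{GLn act}).

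The remaining content is the verification that $\alpha_{g.\Delta} = g\alpha_{\Delta}g^{-1}$ and $\beta_{g.\Delta} = g\beta_{\Delta}g^{-1}$. The key observation is that the morphism $\mathrm{ted}: X_{\chi_0} \to \Xstd \otimes \Xstd$ lives entirely in the standard factor and is independent of $M$, so it commutes with $g$ acting in the $M$-slot. Substituting $g.\Delta$ into the composition defining $A_{\bullet}$ and pushing $g$ past the identity on $\Xstd$ gives
\begin{equation*}
A_{g.\Delta} = (\id_{\Xstd} \otimes g) \circ A_{\Delta} \circ g^{-1}.
\end{equation*}
Reading off the coefficients of $e_1$ and $e_2$ in this identity then recovers exactly the conjugation formulas required for matching with (\ref{GLn act}).

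I do not anticipate any genuine obstacle: the argument is pure bookkeeping of tensor factors. The only point deserving attention is the independence of $\mathrm{ted}$ from $M$, which is what allows $g$ to slide past it cleanly; an analogous remark for $\mathrm{det}$ will govern the companion equivariance on the deconstruction side, so that the two constructions ultimately assemble into inverse $\Aut_{\Gamma}(M)$-equivariant isomorphisms between $\RepQ^{\Gamma}_{M,N^{\ssf}}$ and the $\Gamma$-fixed locus inside $\RepQ^{\bullet}_{M,N^{\ssf}}$.
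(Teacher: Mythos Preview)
Your proposal is correct and follows essentially the same approach as the paper: a direct verification by unpacking the definitions and checking that conjugation on $\Delta$ induces conjugation on $\alpha_\Delta,\beta_\Delta$. The only cosmetic difference is that the paper works componentwise via the explicit identities $\alpha_\Delta=-\Delta_{e_2}$ and $\beta_\Delta=\Delta_{e_1}$, whereas you route the same computation through the composite $A_\Delta$ and the observation that $\mathrm{ted}$ is independent of $M$; the content is identical.
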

\begin{proof}
Take $g \in \Aut_{\Gamma}(M)$ and $(\Delta,Z_1,Z_2) \in \RepQ^{\Gamma}_{M,N^{\ssf}}$. By definition of the action of $\Aut_{\Gamma}(M)$ on $\RepQ^{\Gamma}_{M,N^{\ssf}}$, one has that $(g.\Delta)_{e_2}=g\Delta_{e_2}g^{-1}$ and $(g.\Delta)_{e_1}=g\Delta_{e_1}g^{-1}$. Since $\alpha_{\Delta}=-\Delta_{e_2}$ and $\beta_{\Delta}=\Delta_{e_1}$, it is clear that, by definition of the $\GL(M)$-action on $\RepQ^{\bullet}_{M,N^{\ssf}}$, the map $\tilde{\iota}_{M,N^{\ssf}}^{\Gamma}$ is $\Aut_{\Gamma}(M)$-equivariant.
\end{proof}

\begin{prop}
\label{reconstr works}
The map $\tilde{\iota}_{M,N^{\ssf}}^{\Gamma}$ induces a map $\iota_{M,N^{\ssf}}^{\Gamma}: \QVG_{\boldsymbol{1}}(M,N^{\ssf}) \to \Gie(n,r)^{\Gamma_{\ssf}}$.
\end{prop}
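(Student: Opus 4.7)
To produce $\iota_{M,N^{\ssf}}^{\Gamma}$ it suffices to check that $\tilde{\iota}_{M,N^{\ssf}}^{\Gamma}$ restricts to a map between the zero loci of the two moment maps, preserves semistability, and lands in the $\Gamma_{\ssf}$-fixed locus of $\Gie(n,r)$, and then to combine these facts with the $\Aut_{\Gamma}(M)$-equivariance from Lemma \ref{iota_equiv} to descend to GIT quotients.

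First I would verify moment-map compatibility. The McKay variety $\QVG_{\boldsymbol{1}}(M,N^{\ssf})$ is the GIT quotient by $\Aut_{\Gamma}(M)$ of the zero fiber of the natural moment map on $\RepQ^{\Gamma}_{M,N^{\ssf}}$. A direct computation, unfolding $\alpha_{\Delta}$ and $\beta_{\Delta}$ from $A_{\Delta}=(\mathrm{Id}\otimes\Delta)\circ(\mathrm{ted}\otimes\mathrm{Id})$ and using the explicit definitions of $\mathrm{det}$ and $\mathrm{ted}$, identifies the commutator $[\alpha_{\Delta},\beta_{\Delta}]$ with (a scalar multiple of) the contraction of $\Delta\circ(\mathrm{Id}\otimes\Delta)$ by $\mathrm{det}$, which is precisely the quantity constrained by the McKay moment-map equation — the framing contribution producing $-Z_1Z_2$.

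Next, with $g_{\omega_{\ell}}:=\sigma_n^{\ssf}(\omega_{\ell})^{-1}$, the $\Gamma$-equivariance of $A_{\Delta}$ from Lemma \ref{lemme_fix} together with that of $Z_1$ and $Z_2$ translates component-wise into
\[
{}^{(\omega_{\ell},t_{\ssf})}(\alpha_{\Delta},\beta_{\Delta},Z_1,Z_2)=g_{\omega_{\ell}}.(\alpha_{\Delta},\beta_{\Delta},Z_1,Z_2),
\]
which is exactly the defining condition for a $\Gamma_{\ssf}$-fixed class in $\Gie(n,r)$, mirroring the deconstruction step. For semistability, the character $\boldsymbol{1}$ on the McKay side pulls back to $\chi_1=\mathrm{det}$ on $\GL(M)=\GL_n(\C)$ under $\Aut_{\Gamma}(M)\hookrightarrow\GL_n(\C)$, so a Jordan-destabilising subspace $W\subseteq M$ would yield, via $W\mapsto\bigcap_{\gamma\in\Gamma}\sigma_n^{\ssf}(\gamma)W$, a proper $\Gamma$-subrepresentation that remains $(\alpha_{\Delta},\beta_{\Delta})$-stable (by the equivariance of $A_{\Delta}$) and contains $\mathrm{Im}\,Z_1$ (since $Z_1$ is $\Gamma$-equivariant and $\sigma^{\ssf}_r(\omega_{\ell})$ is invertible), contradicting the assumed semistability of $(\Delta,Z_1,Z_2)$.

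Combining the three properties with Lemma \ref{iota_equiv} lets $\tilde{\iota}_{M,N^{\ssf}}^{\Gamma}$ descend to the GIT quotient on the source, yielding the desired $\iota_{M,N^{\ssf}}^{\Gamma}$. The main obstacle is the moment-map compatibility: the McKay moment map a priori consists of $|I_{\Gamma}|$ equations, one per irreducible $\Gamma$-character, and matching these against the single Jordan equation $[\alpha_{\Delta},\beta_{\Delta}]+Z_1Z_2=0$ requires careful bookkeeping of the $\Gamma$-equivariant categorical isomorphisms $\mathrm{det}$ and $\mathrm{ted}$.
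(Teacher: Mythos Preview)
Your outline is correct and follows the paper's structure exactly: check moment-map compatibility, semistability, and $\Gamma_{\ssf}$-fixedness, then descend via Lemma~\ref{iota_equiv}. Two remarks on the details.

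For semistability the paper argues directly rather than by your contrapositive: it lets $M'$ be the subspace generated by $\mathrm{Im}(Z_1)$ under $\alpha_{\Delta},\beta_{\Delta}$, observes that $M'$ is automatically a $\Gamma$-submodule (by the equivariance of $\Delta$), and concludes $M'=M$ from McKay semistability via \cite[Lemma~2.30]{Pae1}. Your intersection $\bigcap_{\gamma}\gamma\cdot W$ is the dual version of the same idea and works for the same reason, namely that $\alpha_{\Delta},\beta_{\Delta}$ are $\Gamma$-semiequivariant (they scale by a character), so $\gamma\cdot W$ remains $(\alpha_{\Delta},\beta_{\Delta})$-stable whenever $W$ is.

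The ``main obstacle'' you flag for the moment map is not one. Since $\alpha_{\Delta}=-\Delta_{e_2}$ and $\beta_{\Delta}=\Delta_{e_1}$, one has literally
\[
[\alpha_{\Delta},\beta_{\Delta}]+Z_1Z_2 \;=\; \Delta_{e_1}\Delta_{e_2}-\Delta_{e_2}\Delta_{e_1}+Z_1Z_2,
\]
and this endomorphism of $M$ is $\Gamma$-equivariant, so its vanishing in $\End(M)$ (the single Jordan equation) is the same statement as its vanishing in $\End_{\Gamma}(M)\simeq\prod_{\chi}M_{d_{\chi}}(\C)$ (the $|I_{\Gamma}|$ McKay equations). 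The paper dispatches this in one line; no bookkeeping of $\mathrm{det}$ and $\mathrm{ted}$ is required.
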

\begin{proof}
First let us explain why $\tilde{\iota}_{M,N^{\ssf}}^{\Gamma}(\tQVG_{\boldsymbol{1}}(M,N^{\ssf})) \subset \tilde{\Gie}(n,r)$.\\
Take $(\Delta,Z_1,Z_2) \in \RepQ^{\Gamma}_{M,N^{\ssf}}$. By construction, $\alpha_{\Delta}$ and $\beta_{\Delta}$ are respectively the morphisms $-\Delta_{e_2}$ and $\Delta_{e_1}$.
One then has
\begin{equation*}
\alpha_{\Delta}\beta_{\Delta} -\beta_{\Delta}\alpha_{\Delta} + Z_1Z_2 = -\Delta_{e_2}\Delta_{e_1}+\Delta_{e_1}\Delta_{e_2} + Z_1Z_2
\end{equation*}
This computation shows that if $\mu_{\Gamma}(\Delta,Z_1,Z_2)=0$, then $ \mu_{n,r}(\alpha_{\Delta},\beta_{\Delta},Z_1,Z_2)=0$. Let $(\Delta, Z_1,Z_2)$ be an element of $\tQVG_{\boldsymbol{1}}(M,N^{\ssf})$ and $M'$ be the subspace of $M$ that is generated by $\mathrm{Im}(Z_1)$ under the action of ${\alpha_{\Delta}}$ and ${\beta_{\Delta}}$. Thanks to \cite[Example $10.36$]{Kir06}, it is enough to show that $M'=M$ to conclude that $\tilde{\iota}_{M,N^{\ssf}}^{\Gamma}(\Delta,Z_1,Z_2)$ is $\boldsymbol{1}$-semistable. The $\Gamma$-equivariance of $\Delta:X_{\mathrm{std}} \otimes M \to M$ implies that $M'$ is a $\Gamma$-submodule of $M$. By hypothesis, $(\Delta, Z_1, Z_2)$ is $\boldsymbol{1}$-semistable and $\Delta(X_{\mathrm{std}}\otimes M') \subset M'$, which gives that $M'=M$ thanks to \cite[Lemma $2.30$]{Pae1}.
  The morphism  $\tilde{\iota}_M^{\Gamma}$ is $\Aut_{\Gamma}(M)$-equivariant (Lemma \ref{iota_equiv}) . It then induces ${\iota_{M,N^{\ssf}}^{\Gamma}: \QVG_{\boldsymbol{1}}(M,N^{\ssf}) \to \Gie(n,r)}$. There remains to show that $\iota_{M,N^{\ssf}}^{\Gamma}\left(\QVG_{\boldsymbol{1}}(M,N^{\ssf})\right) \subset \Gie(n,r)^{\Gamma_{\ssf}}$. If $(\Delta,Z_1,Z_2) \in \RepQ^{\Gamma}_{M,N^{\ssf}}$ and $m \in M$, then Lemma \ref{lemme_fix} implies that
\begin{equation*}
\omega_{\ell}.(e_1\otimes \alpha_{\Delta}(m)+e_2\otimes \beta_{\Delta}(m))=e_1\otimes \alpha_{\Delta}(\omega_{\ell}.m) + e_2 \otimes \beta_{\Delta}(\omega_{\ell}.m)
\end{equation*}
From there, one has
\begin{center}
$\begin{cases}
\omega_{\ell}.(\zeta_{\ell}\alpha_{\Delta}(m))=\alpha_{\Delta}(\omega_{\ell}.m)\\
\omega_{\ell}.(\zeta_{\ell}^{-1}\beta_{\Delta}(m))=\beta_{\Delta}(\omega_{\ell}.m)
\end{cases}
\Rightarrow
\begin{cases}
\zeta_{\ell}\alpha_{\Delta}(m)=\omega_{\ell}^{-1}.\alpha_{\Delta}(\omega_{\ell}.m)\\
\zeta_{\ell}^{-1}\beta_{\Delta}(m)=\omega_{\ell}^{-1}.\beta_{\Delta}(\omega_{\ell}.m)
\end{cases}
$
\end{center}
Moreover, the $\Gamma$-equivariance of $Z_1$ and $Z_2$ imply that for all $(m,x) \in M\times N^{\ssf}$
\begin{align*}
Z_1(t_{\ssf}(x)))&=\omega_{\ell}.Z_1(x)\\
Z_2(\omega_{\ell}.m)&=t_{\ssf}(Z_2(m)).
\end{align*}
If we denote by $g \in \GL(M)$ the action of $\omega_{\ell}^{-1}$ on $M$, then
\begin{equation*}
^{(\omega_{\ell}, t_{\ssf})}(\alpha_{\Delta},\beta_{\Delta},Z_1,Z_2)=g.(\alpha_{\Delta}, \beta_{\Delta},Z_1,Z_2)
\end{equation*}
\end{proof}

\subsection{Synthesis}

Let us now connect the last two subsections. Denote by $\RepGn$ the set of all group morphisms $\sigma:\Gamma \to \GL_n(\C)$ and let $L_{\Gamma}^{\ssf}$ be the following algebraic variety
\begin{equation*}
\left\{(\alpha,\beta,v^1,v^2,\sigma) \in \tilde{\Gie}(n,r) \times \RepGn \text{ } |^{D^{\ssf}_{\Gamma}(\gamma)}(\alpha,\beta,v^1,v^2) = \sigma(\gamma^{-1}).(\alpha,\beta,v^1,v^2), \forall \gamma \in \Gamma\right \}.
\end{equation*}

\noindent Define
\begin{itemize}
\item$\tilde{p}_{\ssf}: L_{\Gamma}^{\ssf} \to \tilde{\Gie}(n,r)$,
\item $p_{\ssf}: L_{\Gamma}^{\ssf} \to \Gie(n,r) = \pi \circ \tilde{p}_{\ssf}$,
\item $q_{\ssf}: L_{\Gamma}^{\ssf} \to \RepGn$.
\end{itemize}
Let the group $\GL_n(\C)$ act by conjugacy on $\RepGn$ and diagonally on $L_{\Gamma}^{\ssf}$.
The morphisms $\tilde{p}_{\ssf}$ and $q_{\ssf}$ are $\GL_n(\C)$-equivariant.
Moreover, note that $p_{\ssf}(L_{\Gamma}^{\ssf})=\Gie(n,r)^{\Gamma_{\ssf}}$.

\begin{lemme}
\label{inj}
If $(\alpha,\beta,v^1,v^2,\sigma)$ and $(\alpha,\beta,v^1,v^2, \sigma')$ are both in $L_{\Gamma}^{\ssf}$, then $\sigma=\sigma'$
\end{lemme}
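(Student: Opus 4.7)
The plan is to reduce this to the freeness statement already recorded as Lemma \ref{free action}. By the defining condition of $L_{\Gamma}^{\ssf}$, for every $\gamma \in \Gamma$ one has simultaneously
\begin{equation*}
{}^{D^{\ssf}_{\Gamma}(\gamma)}(\alpha,\beta,v^1,v^2) = \sigma(\gamma^{-1}).(\alpha,\beta,v^1,v^2) = \sigma'(\gamma^{-1}).(\alpha,\beta,v^1,v^2),
\end{equation*}
so the left-hand sides being equal forces the two $\GL_n(\C)$-translates on the right to coincide. Setting $g_\gamma := \sigma'(\gamma^{-1})\sigma(\gamma^{-1})^{-1} \in \GL_n(\C)$, this says that $g_\gamma$ stabilises the point $(\alpha,\beta,v^1,v^2)$.

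Next I would invoke the fact that $(\alpha,\beta,v^1,v^2) \in \tilde{\Gie}(n,r)$, so by Lemma \ref{free action} the stabiliser of this point under $\GL_n(\C)$ is trivial. Consequently $g_\gamma = \id$, i.e. $\sigma(\gamma^{-1}) = \sigma'(\gamma^{-1})$. Since $\gamma \in \Gamma$ was arbitrary and $\gamma \mapsto \gamma^{-1}$ is a bijection, this yields $\sigma = \sigma'$.

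There is really no serious obstacle here: the entire argument is a one-line application of the free $\GL_n(\C)$-action on the semistable locus, the subtle input having already been isolated in Lemma \ref{free action} (itself based on \cite[Theorem $10.34$]{Kir06} and \cite[Corollary $2.3.8$]{Ginz08}). The only thing to be careful about is to notice that $(\alpha,\beta,v^1,v^2)$ does lie in $\tilde{\Gie}(n,r)$ (and not merely in $\mu_{n,r}^{-1}(0)$), which is built into the definition of $L_{\Gamma}^{\ssf}$.
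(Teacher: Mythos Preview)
Your proof is correct and follows exactly the same approach as the paper: both deduce $\sigma(\gamma).(\alpha,\beta,v^1,v^2)=\sigma'(\gamma).(\alpha,\beta,v^1,v^2)$ from the defining relation of $L_{\Gamma}^{\ssf}$ and then invoke Lemma \ref{free action} (freeness of the $\GL_n(\C)$-action on $\tilde{\Gie}(n,r)$) to conclude. You have simply spelled out the intermediate element $g_\gamma$ and the bijectivity of $\gamma\mapsto\gamma^{-1}$ explicitly, which the paper leaves implicit.
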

\begin{proof}
From the assumption, one has $\sigma(\gamma).(\alpha,\beta,v^1,v^2)=\sigma'(\gamma).(\alpha,\beta,v^1,v^2)$ for all $\gamma \in \Gamma$. The freedom of the $\GL_n(\C)$-action (Lemma \ref{free action}) implies that $\sigma=\sigma'$.
\end{proof}

\noindent If $\sigma \in \RepGn$, then $M^{\sigma}$ will denote the $\Gamma$-module induced by $\sigma$.

\begin{deff}
Let $\sigma \in \RepGn$ be such that $q_{\ssf}^{-1}(\sigma) \neq \emptyset$ and define
\begin{center}
$\tilde{\kappa}_{\ssf,\sigma}^{\Gamma}: \begin{array}{ccc}
q_{\ssf}^{-1}(\sigma) & \to & \RepQ^{\Gamma}_{M^{\sigma},N^{\ssf}}\\
(\alpha,\beta,v^1,v^2,\sigma) & \mapsto & (\Delta^{A_{\alpha,\beta}},Z_1^v,Z_2^v)  \\
\end{array}$
\end{center}
where $(\Delta^{A_{\alpha,\beta}},Z_1^v,Z_2^v)$ is as in Proposition \ref{constr rep}.
\end{deff}

\begin{prop}
\label{rep works}
If $\sigma \in \RepGn$ is such that $q_{\ssf}^{-1}(\sigma) \neq \emptyset$, then $\tilde{\kappa}_{\ssf,\sigma}^{\Gamma}(q_{\ssf}^{-1}(\sigma)) = \tQV_{\boldsymbol{1}}^{\Gamma}(M^{\sigma},N^{\ssf})$.
\end{prop}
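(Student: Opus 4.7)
The plan is to prove the two inclusions $\subseteq$ and $\supseteq$ separately. For the forward inclusion, I start with a tuple $(\alpha,\beta,v^1,v^2,\sigma) \in q_{\ssf}^{-1}(\sigma)$ and its image $(\Delta^{A_{\alpha,\beta}}, Z_1^v, Z_2^v)$, which already lies in $\RepQ^{\Gamma}_{M^{\sigma}, N^{\ssf}}$ by Proposition \ref{constr rep}. Two things need to be checked: the McKay moment map relation $\mu_{\Gamma}(\Delta^{A_{\alpha,\beta}}, Z_1^v, Z_2^v) = 0$, and $\boldsymbol{1}$-semistability of the triple.

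For the moment map condition, I would unpack the definition $\Delta^A = (\mathrm{det} \otimes \mathrm{Id}) \circ (\mathrm{Id} \otimes A)$ on elementary tensors to recover the identifications $\Delta^A(e_1\otimes m) = -\beta(m)$ and $\Delta^A(e_2\otimes m) = \alpha(m)$, which turn the McKay relation into the Jordan quiver relation $[\alpha,\beta] + v^1v^2 = 0$ that already holds by hypothesis. For $\boldsymbol{1}$-semistability, I would take any $\Gamma$-submodule $M' \subset M^{\sigma}$ stable under $\Delta^{A_{\alpha,\beta}}$ (equivalently, under $\alpha$ and $\beta$) and containing $\mathrm{Im}(Z_1^v) = \mathrm{Im}(v^1)$, and invoke the fact that every point of $\tilde{\Gie}(n,r)$ is stable (Lemma \ref{free action}) to conclude that the smallest $(\alpha,\beta)$-invariant subspace containing $\mathrm{Im}(v^1)$ is already all of $\C^n$, forcing $M' = M^{\sigma}$.

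For the reverse inclusion, I would take $(\Delta, Z_1, Z_2) \in \tQV_{\boldsymbol{1}}^{\Gamma}(M^{\sigma}, N^{\ssf})$ and run it through the reconstruction map $\tilde{\iota}_{M^{\sigma},N^{\ssf}}^{\Gamma}$. Proposition \ref{reconstr works} already guarantees that the output $(\alpha_{\Delta}, \beta_{\Delta}, Z_1, Z_2)$ lies in $\tilde{\Gie}(n,r)$ with $\Gamma_{\ssf}$-fixed class in $\Gie(n,r)$, and the element $g \in \GL_n(\C)$ exhibited in that proof is exactly the action of $\omega_{\ell}^{-1}$ on $M^{\sigma}$ via $\sigma$. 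Therefore the enriched tuple $(\alpha_{\Delta}, \beta_{\Delta}, Z_1, Z_2, \sigma)$ belongs to $q_{\ssf}^{-1}(\sigma)$, and uniqueness of the $\sigma$-component is secured by Lemma \ref{inj}. It then remains to check that $\tilde{\kappa}_{\ssf,\sigma}^{\Gamma}$ applied to this tuple returns the original $(\Delta, Z_1, Z_2)$, which amounts to showing that the assignments $\Delta \mapsto A_{\Delta}$ and $A \mapsto \Delta^A$ are mutually inverse; this follows from the fact that $\mathrm{det}$ and $\mathrm{ted}$ are the two structural $\Gamma$-equivariant maps arising from the symplectic pairing on $X_{\std}$.

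The step I expect to be the main obstacle is the careful sign bookkeeping in the identifications $\alpha_{\Delta} = -\Delta_{e_2}$, $\beta_{\Delta} = \Delta_{e_1}$ used in Lemma \ref{iota_equiv}: I must check compatibly in both directions so that the moment-map translation and the mutual inversion of $\tilde{\iota}_{M^{\sigma},N^{\ssf}}^{\Gamma}$ and $\tilde{\kappa}_{\ssf,\sigma}^{\Gamma}$ hold on the nose rather than up to some overall sign. Once this is done, the rest of the argument reduces to the two verifications described above.
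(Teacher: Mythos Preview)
Your approach matches the paper's: compute $\Delta^A_{e_i}$ explicitly to reduce the McKay moment map to the Jordan one and use the generation criterion for $\boldsymbol{1}$-semistability in the forward direction, then for the reverse direction feed $(\Delta,Z_1,Z_2)$ through $\tilde{\iota}^{\Gamma}_{M^{\sigma},N^{\ssf}}$ (the paper redoes the fixed-point computation directly rather than citing Proposition~\ref{reconstr works}, but it is the same calculation). Two small notes: the paper's convention is $\Delta^A_{e_1}=\beta$, $\Delta^A_{e_2}=-\alpha$ (a global sign opposite to what you wrote, harmless for the commutator as you anticipated), and your appeal to Lemma~\ref{inj} is superfluous since you already build the preimage with the prescribed $\sigma$.
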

\begin{proof}
Take $(\alpha,\beta,v^1,v^2,\sigma) \in q_{\ssf}^{-1}(\sigma)$, by construction $\Delta^A_{e_1}=\beta$ and $\Delta^A_{e_2}=-\alpha$. One then has
\begin{equation*}
\Delta^A_{e_1}\Delta^A_{e_2}-\Delta^A_{e_2}\Delta^A_{e_1} + Z_1^vZ_2^v = -\beta\alpha + \alpha\beta + v^1v^2
\end{equation*}
This proves that if $(\alpha,\beta,v^1,v^2,\sigma)\in q^{-1}_{\ssf}(\sigma)$ then  $\mu_{\Gamma}(\Delta^{A_{\alpha,\beta}},Z_1^v,Z_2^v)=0$. Take $M'$ a $\Gamma$-submodule of $M^{\sigma}$ such that $\Delta^{A}(\Xstd \otimes M') \subset M'$ and $\mathrm{Im}(Z_1^v) \subset M'$. Thanks to \cite[Lemma $2.30$]{Pae1}, it is enough to show that $M^{\sigma} \subset M'$ to conclude that $(\Delta^A,Z_1^v,Z_2^v)$ is {$\boldsymbol{1}$-semistable}. Since the element $(\alpha,\beta,v^1,v^2)$ is in  $\tilde{\Gie}(n,r)$, the vector space $M^{\sigma}$ is generated by $\mathrm{Im}(v^1)$ under the action of $\alpha$ and $\beta$. Since $\alpha = -\Delta^A_{e_2}$, $\beta= \Delta^A_{e_1}$ and $\mathrm{Im}(v^1) \subset M'$, one can conclude that $M^{\sigma}\subset M'$. Thus $\tilde{\kappa}_{\ssf,\sigma}^{\Gamma}(q^{-1}(\sigma)) \subset \tQV_{\boldsymbol{1}}^{\Gamma}(M^{\sigma},N^{\ssf})$.\\
To finish, let us prove the other inclusion. Pick $(\Delta,Z_1,Z_2) \in \tQVG_{\boldsymbol{1}}(M^{\sigma},N^{\ssf})$. A quick computation gives
\begin{align*}
^{D^{\ssf}_{\Gamma}(\omega_{\ell})}(-\Delta_{e_2},\Delta_{e_1},Z_1,Z_2)
&= (-\Delta_{\omega_{\ell}^{-1}e_2}, \Delta_{\omega_{\ell}^{-1}e_1},Z_1t_{\ssf}^{-1},t_{\ssf}Z_2)\\
&= (-\sigma(\omega_{\ell})^{-1}\Delta_{e_2}\sigma(\omega_{\ell}), \sigma(\omega_{\ell})^{-1}\Delta_{e_1}\sigma(\omega_{\ell}),\sigma(\omega_{\ell})^{-1}Z_1,Z_2\sigma(\omega_{\ell}))\\
&= \sigma(\omega_{\ell})^{-1}.(-\Delta_{e_2}, \Delta_{e_1}, Z_1,Z_2)
\end{align*}
The last equality comes from the $\Gamma$-equivariance of $\Delta$, $Z_1$ and $Z_2$.
\end{proof}

\noindent Consider now $\kappa_{\ssf,\sigma}^{\Gamma}: q_{\ssf}^{-1}(\sigma) \to \QVG_{\boldsymbol{1}}(M^{\sigma},N^{\ssf})$ which is onto whenever $q_{\ssf}^{-1}(\sigma) \neq \emptyset$, thanks to Proposition \ref{rep works}.
\begin{prop}
 \label{prop_commutes}
For each $\sigma \in \mathrm{Rep}_{\Gamma,n}$ such that $q_{\ssf}^{-1}(\sigma) \neq \emptyset$, the following diagram commutes
\begin{center}
\begin{tikzcd}[column sep=huge]
 q_{\ssf}^{-1}(\sigma) \ar[d, "p"] \ar[r, "\kappa_{\ssf,\sigma}^{\Gamma}"] & \QVG_{\boldsymbol{1}}(M^{\sigma},N^{\ssf}) \ar[dl, "\iota_{M^{\sigma},N^{\ssf}}^{\Gamma}"]\\
 \Gie(n,r)^{\Gamma_{\ssf}}
 \end{tikzcd}
 \end{center}
 \end{prop}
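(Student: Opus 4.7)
The plan is to verify commutativity at the level of representatives, then descend to the quotients. Given $(\alpha,\beta,v^1,v^2,\sigma) \in q_{\ssf}^{-1}(\sigma)$, I will compute both paths around the triangle and show they coincide with $\tilde{p}_{\ssf}(\alpha,\beta,v^1,v^2,\sigma)=(\alpha,\beta,v^1,v^2)$ in $\tilde{\Gie}(n,r)$; the statement then follows after passing to the GIT quotient.

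The core calculation is that the assignments $A \mapsto \Delta^A$ and $\Delta \mapsto A_{\Delta}$ are mutually inverse bijections between $\Hom_{\Gamma}(M^{\sigma}, X_{\std}\otimes M^{\sigma})$ and $\Hom_{\Gamma}(X_{\std}\otimes M^{\sigma}, M^{\sigma})$. Concretely, I would unwind $\Delta^A$ on the basis vectors $e_1$ and $e_2$, using $\det(e_1\otimes e_2)=1$, $\det(e_2\otimes e_1)=-1$, and the vanishing of the other two pairings, to obtain $\Delta^A_{e_1}=\beta$ and $\Delta^A_{e_2}=-\alpha$ (this computation already appears inside the proof of Proposition \ref{rep works}). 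Dually, unwinding $A_{\Delta}$ via $\mathrm{ted}(1)=e_2\otimes e_1-e_1\otimes e_2$ yields $A_{\Delta}(m)=-e_1\otimes \Delta_{e_2}(m)+e_2\otimes \Delta_{e_1}(m)$, so $\alpha_{\Delta}=-\Delta_{e_2}$ and $\beta_{\Delta}=\Delta_{e_1}$. Composing: $\alpha_{\Delta^A}=-\Delta^A_{e_2}=\alpha$ and $\beta_{\Delta^A}=\Delta^A_{e_1}=\beta$. Since $\tilde{\kappa}_{\ssf,\sigma}^{\Gamma}$ leaves the framing data $(v^1,v^2)$ untouched (it is just reinterpreted as $(Z_1^v,Z_2^v)$) and $\tilde{\iota}_{M^\sigma,N^{\ssf}}^{\Gamma}$ also keeps $(Z_1,Z_2)$ fixed, we conclude that $\tilde{\iota}_{M^{\sigma},N^{\ssf}}^{\Gamma}\circ\tilde{\kappa}_{\ssf,\sigma}^{\Gamma}$ equals the restriction of $\tilde{p}_{\ssf}$ to $q_{\ssf}^{-1}(\sigma)$.

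Finally, I descend to the quotients: the right-hand edge $\iota_{M^\sigma,N^{\ssf}}^{\Gamma}$ is the quotient of $\tilde{\iota}_{M^\sigma,N^{\ssf}}^{\Gamma}$ by $\Aut_{\Gamma}(M^\sigma)$ (Lemma \ref{iota_equiv} and Proposition \ref{reconstr works}), while $p_{\ssf}=\pi\circ\tilde{p}_{\ssf}$ is the quotient by $\GL_n(\C)$. Since any $g\in\Aut_{\Gamma}(M^\sigma)$ is in particular an element of $\GL_n(\C)$, the identity of representatives passes to the quotient and yields $\iota_{M^\sigma,N^{\ssf}}^{\Gamma}\circ\kappa_{\ssf,\sigma}^{\Gamma}=p_{\ssf}|_{q_{\ssf}^{-1}(\sigma)}$. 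I do not expect real obstacles here: the crux is the elementary linear-algebra duality between $\det$ and $\mathrm{ted}$ (which together express the canonical $\Gamma$-invariant symplectic form on $X_{\std}$), and once the two constructions are identified as mutually inverse, the commutativity of the diagram is automatic.
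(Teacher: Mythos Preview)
Your proposal is correct and follows exactly the paper's approach; the paper's own proof is simply the one-line assertion that $\tilde{\iota}_{M^{\sigma},N^{\ssf}}^{\Gamma}(\tilde{\kappa}_{\ssf,\sigma}^{\Gamma}(\alpha,\beta,v^1,v^2,\sigma))=(\alpha,\beta,v^1,v^2)$ ``by construction'', whereas you have explicitly unpacked the construction by verifying $\alpha_{\Delta^A}=\alpha$ and $\beta_{\Delta^A}=\beta$ via the $\det/\mathrm{ted}$ duality. There is no divergence in method, only in level of detail.
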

 \begin{proof}
Take $(\alpha, \beta,v^1,v^2, \sigma) \in q_{\ssf}^{-1}(\sigma)$, by construction one has
\begin{equation*}
\tilde{\iota}_{M^{\sigma},N^{\ssf}}^{\Gamma}(\tilde{\kappa}_{\ssf,\sigma}^{\Gamma}(\alpha,\beta,v^1,v^2,\sigma))=(\alpha,\beta,v^1,v^2)
\end{equation*}
 which shows that the diagram commutes.
\end{proof}
\noindent Denote by $\Delta_{\Gamma}:=\Z I_{\Gamma}$. For $\sigma \in \mathrm{Rep}_{\Gamma,n}$ and for $\chi \in I_{\Gamma}$, let $d^{\sigma}_{\chi} := \mathrm{dim}(\Hom_{\Gamma}(X_{\chi},M^{\sigma}))$.
Denote the character map by
\begin{equation*}
\mathrm{char}_{\Gamma}: \begin{array}{ccc}
\RepGn & \to & \Delta_{\Gamma}\\
\sigma & \mapsto & \sum_{\chi \in I_{\Gamma}}{d^{\sigma}_{\chi}\chi}  \\
\end{array}
\end{equation*}

\noindent Let $\tilde{\mathcal{A}}^n_{\Gamma} := \mathrm{char}_{\Gamma}(\RepGn)$ which is just a combinatorial way to encode $\mathrm{Char}_{\Gamma,n}$ the set of all characters associated with $n$-dimensional representations of $\Gamma$. Moreover for ${d \in \tilde{\mathcal{A}}^n_{\Gamma}}$, denote by $\mathcal{C}_d:=\mathrm{char}_{\Gamma}^{-1}(d)$ the set of all $n$-dimensional representations that have character $\sum_{\chi \in I_{\Gamma}}{d_{\chi}\chi}$. Note that if one takes $\sigma \in \mathcal{C}_d$, then $\mathcal{C}_d$ is just the $\mathrm{GL}_n(\C)$-conjugacy class of $\sigma$. With that notation, one has
\begin{equation*}
\RepGn = \coprod_{d \in \tilde{\mathcal{A}}^n_{\Gamma}}{\mathcal{C}_d}
\end{equation*}

\noindent Denote by $\mathcal{A}^n_{\Gamma,\ssf}:=\{d \in \tilde{\mathcal{A}}^n_{\Gamma} | q_{\ssf}^{-1}(\mathcal{C}_d) \neq \emptyset\}$. For $d \in \mathcal{A}^n_{\Gamma,\ssf}$, one can associate an $I_{\Gamma}$-graded vector space $V^d:=\bigoplus_{\chi \in I_{\Gamma}}{\C^{d_{\chi}}}$. Denote by ${M^d:=\bigoplus_{\chi \in I_{\Gamma}}{V^d_{\chi}\otimes X_{\chi}}}$ the $\Gamma$-module associated with $d$.


\begin{deff}
Take $d \in \mathcal{A}^n_{\Gamma,\ssf}$. Let us define the variety $\mathcal{M}_d^{\Gamma_{\ssf}}:=p_{\ssf}(q_{\ssf}^{-1}(\mathcal{C}_d))$.
\end{deff}

\begin{rmq}
Note that $\mathcal{M}_d^{\Gamma_{\ssf}}=p_{\ssf}(q_{\ssf}^{-1}(\sigma))$ for any $\sigma \in \mathcal{C}_d$ because $\tilde{p}_{\ssf}$ and $q_{\ssf}$ are $\GL_n(\C)$-equivariant morphisms.
\end{rmq}

\begin{thm}
\label{decomp}
For each non-zero integers $n$, $r$ and $\ell$ and for each $\ssf \in \left(\Z/\ell\Z\right)^r$, the scheme of $\Gamma_{\ssf}$-fixed points in the Gieseker space decomposes into irreducible components
\begin{equation*}
\Gie(n,r)^{\Gamma_{\ssf}} = \coprod_{d \in \mathcal{A}^n_{\Gamma, \ssf}}{\QV_d^{\Gamma_{\ssf}}}
\end{equation*}
\end{thm}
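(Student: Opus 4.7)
The strategy is to transport the $\GL_n(\C)$-orbit decomposition
$\RepGn = \coprod_{d \in \tilde{\mathcal{A}}^n_{\Gamma}} \mathcal{C}_d$
through the correspondence
$\Gie(n,r)^{\Gamma_{\ssf}} \xleftarrow{p_{\ssf}} L_{\Gamma}^{\ssf} \xrightarrow{q_{\ssf}} \RepGn$,
and then to invoke Proposition \ref{prop_commutes} to identify each resulting piece with a Nakajima quiver variety over the framed McKay quiver of $\Gamma$, whose irreducibility is a standard consequence of the theory used in \cite{Kir06} and \cite{Pae1}.

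The first step is to check that the pieces $\QV_d^{\Gamma_{\ssf}} = p_{\ssf}(q_{\ssf}^{-1}(\mathcal{C}_d))$ are pairwise disjoint and cover $\Gie(n,r)^{\Gamma_{\ssf}}$. Given any fixed point $x = \overline{(\alpha,\beta,v^1,v^2)}$, Lemma \ref{free action} together with the $\Gamma_{\ssf}$-invariance of $x$ produces, for any chosen representative, a unique $\sigma \in \RepGn$ such that $(\alpha,\beta,v^1,v^2,\sigma) \in L_{\Gamma}^{\ssf}$, by Lemma \ref{inj}. Replacing the representative by $g\cdot$ conjugates $\sigma$ to $g\sigma g^{-1}$, so the character $d = \mathrm{char}_{\Gamma}(\sigma) \in \mathcal{A}^n_{\Gamma, \ssf}$ depends only on $x$. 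This yields a well-defined map $\Gie(n,r)^{\Gamma_{\ssf}} \to \mathcal{A}^n_{\Gamma, \ssf}$ whose fibre over $d$ is exactly $\QV_d^{\Gamma_{\ssf}}$; the union exhausts $\Gie(n,r)^{\Gamma_{\ssf}}$ because $p_{\ssf}(L_{\Gamma}^{\ssf}) = \Gie(n,r)^{\Gamma_{\ssf}}$.

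The second step identifies each piece with a quiver variety. Fix $\sigma \in \mathcal{C}_d$. Proposition \ref{rep works} provides the surjection $\kappa_{\ssf,\sigma}^{\Gamma}: q_{\ssf}^{-1}(\sigma) \twoheadrightarrow \tQVG_{\boldsymbol{1}}(M^d,N^{\ssf})$, and Proposition \ref{prop_commutes} factors $p_{\ssf}|_{q_{\ssf}^{-1}(\sigma)}$ as $\iota_{M^d,N^{\ssf}}^{\Gamma} \circ \kappa_{\ssf,\sigma}^{\Gamma}$; hence $\QV_d^{\Gamma_{\ssf}}$ coincides with the image of $\iota_{M^d,N^{\ssf}}^{\Gamma}$. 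Injectivity on this image follows once more from Lemma \ref{inj}: two McKay-quiver representations whose images under $\tilde{\iota}_{M^d,N^{\ssf}}^{\Gamma}$ are $\GL_n(\C)$-conjugate must differ by an element of $\Aut_{\Gamma}(M^d)$. Since the McKay quiver of $\Gamma \simeq \Z/\ell\Z$ is of affine type $A$ and $\boldsymbol{1}$ is a generic stability parameter, $\QVG_{\boldsymbol{1}}(M^d, N^{\ssf})$ is smooth and irreducible. Combined with the smoothness of $\Gie(n,r)^{\Gamma_{\ssf}}$ (as the fixed locus of a finite group acting on a smooth variety), connected components agree with irreducible components, and the decomposition above is the desired one.

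The principal obstacle is not the bijection at the level of points, which is essentially formal once the constructions of Propositions \ref{constr rep} and \ref{reconstr works} are in place, but rather the promotion of $\iota_{M^d,N^{\ssf}}^{\Gamma}$ to a \emph{scheme-theoretic} isomorphism $\QV_d^{\Gamma_{\ssf}} \simeq \QVG_{\boldsymbol{1}}(M^d,N^{\ssf})$. This requires carefully comparing the two GIT constructions and leveraging the semistability analysis of Proposition \ref{reconstr works} to ensure that nothing is lost when passing between the framed Jordan quiver and the framed McKay quiver.
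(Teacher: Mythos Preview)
Your overall strategy matches the paper's: use the correspondence $L_{\Gamma}^{\ssf}$ to transport the orbit decomposition of $\RepGn$, then identify each piece with a McKay quiver variety via Propositions~\ref{rep works} and~\ref{prop_commutes}. The ingredients you invoke (freeness of the $\GL_n$-action, injectivity from Lemma~\ref{inj}, smoothness of the fixed locus, irreducibility of the quiver varieties) are exactly those in the paper.

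There is, however, a genuine gap. You establish a set-theoretic partition $\Gie(n,r)^{\Gamma_{\ssf}} = \coprod_d \QV_d^{\Gamma_{\ssf}}$ into irreducible pieces, and you know the ambient fixed locus is smooth. But this alone does \emph{not} force the $\QV_d^{\Gamma_{\ssf}}$ to be the irreducible components: a smooth variety can be partitioned into irreducible subsets that are neither open nor closed (e.g.\ $\mathbb{A}^1 = \{0\} \sqcup (\mathbb{A}^1\setminus\{0\})$). Your ``well-defined map $\Gie(n,r)^{\Gamma_{\ssf}} \to \mathcal{A}^n_{\Gamma,\ssf}$'' is only constructed at the level of points; to conclude that its fibres are open and closed you need it to be a \emph{morphism} to a discrete target, and this is precisely what you have not supplied.

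The paper fills this gap by upgrading the set-theoretic picture to an honest morphism. It observes that $\tilde{p}_{\ssf}: L_{\Gamma}^{\ssf} \to \pi^{-1}\bigl(\Gie(n,r)^{\Gamma_{\ssf}}\bigr)$ is a bijective morphism onto a smooth variety, hence an isomorphism; composing its inverse $p'_{\ssf}$ with $q_{\ssf}$ and passing to the quotient yields a genuine morphism $\tau_{\ssf}: \Gie(n,r)^{\Gamma_{\ssf}} \to \RepGn\sslash\GL_n(\C)$. Since $\Gamma$ is finite, each $\mathcal{C}_d$ is a closed point of the target, so $\QV_d^{\Gamma_{\ssf}} = \tau_{\ssf}^{-1}(\mathcal{C}_d)$ is closed. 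Your final paragraph locates the difficulty in promoting $\iota_{M^d,N^{\ssf}}^{\Gamma}$ to a scheme-theoretic isomorphism, but that is the content of the \emph{subsequent} Proposition~\ref{prop_iso_quiver_var}; for the present theorem the missing step is the closedness just described, and it is handled via $\tau_{\ssf}$ rather than via $\iota$.
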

\begin{proof}
Take $d \in \mathcal{A}^n_{\Gamma, \ssf}$. Let us first show that $\QV_d^{\Gamma_{\ssf}}$ is an irreducible and closed set of $\Gie(n,r)^{\Gamma_{\ssf}}$.
Take $\sigma \in \mathcal{C}_d$. The \cite[Proposition $2.28$]{Pae1} implies that $\QVG_{\boldsymbol{1}}(M^{\sigma},N^{\ssf})$ is irreducible and since $\QV_d^{\Gamma_{\ssf}}=\iota_{M^{\sigma},N^{\ssf}}^{\Gamma}(\QVG_{\boldsymbol{1}}(M^{\sigma},N^{\ssf}))$, the scheme $\QV_d^{\Gamma}$ is irreducible.
To show that $\QV^{\Gamma}_d$ is a closed subset of $\Gie(n,r)^{\Gamma_{\ssf}}$, note that $L_{\Gamma}^{\ssf}$ is connected and that $\tilde{p}_{\ssf}$ is injective thanks to Lemma \ref{inj}. The image of $p_{\ssf}$ being equal to $\Gie(n,r)^{\Gamma_{\ssf}}$, this implies that
\begin{equation*}
\tilde{p}_{\ssf}(L_{\Gamma}^{\ssf})= \pi^{-1}(\Gie(n,r)^{\Gamma_{\ssf}})
\end{equation*}
The group $\GL_n(\C)$ acting freely on $\tilde{\Gie}(n,r)$ (Lemma \ref{free action}), the morphism $\pi$ is smooth. The group $\Gamma$ being a finite group, one knows that $\Gie(n,r)^{\Gamma_{\ssf}}$ is smooth and in particular $\pi^{-1}(\Gie(n,r)^{\Gamma_{\ssf}})$ is smooth. Thus, the morphism $\tilde{p}_{\ssf}:L_{\Gamma}^{\ssf} \to \pi^{-1}(\Gie(n,r)^{\Gamma_{\ssf}})$ becomes an isomorphism of algebraic varieties.
Denote by ${p'_{\ssf}:\pi^{-1}(\Gie(n,r)^{\Gamma_{\ssf}}) \to L_{\Gamma}^{\ssf}}$ its inverse. Consider now
\begin{center}
$\tilde{\tau}_{\ssf}:=q_{\ssf} \circ p'_{\ssf} : \pi^{-1}(\Gie(n,r)^{\Gamma_{\ssf}}) \to \RepGn$.
\end{center}
The morphism $\tilde{\tau}_{\ssf}$ is $\GL_n(\C)$-equivariant. Therefore, it induces a morphism
\begin{equation*}
{\tau_{\ssf}: \Gie(n,r)^{\Gamma_{\ssf}} \to \RepGn\sslash\GL_n(\C)}.
\end{equation*}

\noindent The big picture draws like so
\vspace{0.5cm}
\begin{center}
$\begin{tikzcd}[row sep=large, column sep=large]
L_{\Gamma}^{\ssf} \ar[r, "q_{\ssf}"] \ar[d, shift right, "\tilde{p}_{\ssf}"'] \ar[dr, "p_{\ssf}"'] & \RepGn \ar[r, two heads] & \RepGn\sslash \mathrm{GL}_n(\C)\\
\pi^{-1}(\Gie(n,r)^{\Gamma_{\ssf}}) \ar[u,shift right, "p'_{\ssf}"'] \ar[r,"\pi"']& \Gie(n,r)^{\Gamma_{\ssf}} \ar[ur, "\tau_{\ssf}"']
\end{tikzcd}$
\end{center}
\vspace{0.5cm}

\noindent It is then clear that
\begin{equation*}
\tau_{\ssf}^{-1}(\mathcal{C}_d) = p_{\ssf}(q_{\ssf}^{-1}(\mathcal{C}_d))=  \QV_d^{\Gamma_{\ssf}}
\end{equation*}
which proves that $\QV_d^{\Gamma}$ is a closed subset. Indeed $\mathcal{C}_d$ is closed because all representations of $\Gamma$ are semisimple since $\Gamma$ is a finite group.\\
Finally, one has to show that $\Gie(n,r)^{\Gamma_{\ssf}}=\bigcup_{d \in \mathcal{A}^n_{\Gamma,\ssf}}{\QV_d^{\Gamma_{\ssf}}}$ which comes for free
 \begin{equation*}
 \Gie(n,r)^{\Gamma_{\ssf}} = \bigcup_{d \in \mathcal{A}^n_{\Gamma,\ssf}}{\tau_{\ssf}^{-1}(\mathcal{C}_d}) = \bigcup_{d \in \mathcal{A}^n_{\Gamma,\ssf}}{\QV_d^{\Gamma_{\ssf}}}
 \end{equation*}
\end{proof}

\begin{prop}
\label{prop_iso_quiver_var}
For each $d \in \mathcal{A}^n_{\Gamma, \ssf}$ and for each $\sigma \in \mathcal{C}_d$
\begin{equation*}
\iota_{M^{\sigma},N^{\ssf}}^{\Gamma}: \QVG_{\boldsymbol{1}}(M^{\sigma},N^{\ssf}) \to \QV_d^{\Gamma_{\ssf}}
\end{equation*}
is an isomorphism of algebraic varieties.
\end{prop}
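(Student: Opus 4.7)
The plan is to verify three things in turn: $\iota := \iota_{M^{\sigma},N^{\ssf}}^{\Gamma}$ is surjective onto $\QV_d^{\Gamma_{\ssf}}$, it is injective, and the resulting bijective morphism admits an algebraic inverse.

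For surjectivity, I combine the surjectivity of $\kappa_{\ssf,\sigma}^{\Gamma}$ (Proposition \ref{rep works}, after passing to the $\Aut_{\Gamma}(M^{\sigma})$-quotient) with the commutative diagram of Proposition \ref{prop_commutes}. Since the image of $p_{\ssf}|_{q_{\ssf}^{-1}(\sigma)}$ is exactly $\QV_d^{\Gamma_{\ssf}}$ (this is the content of the remark following the definition of $\QV_d^{\Gamma_{\ssf}}$, using the $\GL_n(\C)$-equivariance of $\tilde p_{\ssf}$ and $q_{\ssf}$), the composition $\iota\circ\kappa_{\ssf,\sigma}^{\Gamma}$ already covers $\QV_d^{\Gamma_{\ssf}}$, forcing $\iota$ to be surjective.

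For injectivity, suppose two classes $[\Delta,Z_1,Z_2]$ and $[\Delta',Z_1',Z_2']$ in $\QVG_{\boldsymbol{1}}(M^{\sigma},N^{\ssf})$ share the same image under $\iota$. Then their underlying tuples $(\alpha_{\Delta},\beta_{\Delta},Z_1,Z_2)$ and $(\alpha_{\Delta'},\beta_{\Delta'},Z_1',Z_2')$ lie in a common $\GL_n(\C)$-orbit inside $\tilde{\Gie}(n,r)$; pick $g \in \GL_n(\C)$ realising the conjugation. The reconstruction argument in Proposition \ref{reconstr works} shows that both tuples, paired with the fixed structure $\sigma$, belong to $L_{\Gamma}^{\ssf}$. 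Transporting by $g$ turns $\sigma$ into $g\sigma g^{-1}$; since $\tilde p_{\ssf}(g.(\alpha_{\Delta},\beta_{\Delta},Z_1,Z_2,\sigma)) = \tilde p_{\ssf}(\alpha_{\Delta'},\beta_{\Delta'},Z_1',Z_2',\sigma)$, the uniqueness clause of Lemma \ref{inj} forces $g\sigma g^{-1} = \sigma$, i.e. $g\in\Aut_{\Gamma}(M^{\sigma})$. Because $\Delta$ is recovered unambiguously from $(\alpha_{\Delta},\beta_{\Delta})$ through $\Delta_{e_1}=\beta_{\Delta}$ and $\Delta_{e_2}=-\alpha_{\Delta}$, the relation $g.(\alpha_{\Delta},\beta_{\Delta},Z_1,Z_2) = (\alpha_{\Delta'},\beta_{\Delta'},Z_1',Z_2')$ promotes to $g.(\Delta,Z_1,Z_2)=(\Delta',Z_1',Z_2')$, so the two classes coincide.

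For algebraicity of the inverse, I would invoke the universal property of the geometric quotient. Using Lemma \ref{free action}, $\pi: \tilde{\Gie}(n,r) \to \Gie(n,r)$ is a principal $\GL_n(\C)$-bundle, and the isomorphism $\tilde p_{\ssf}: L_{\Gamma}^{\ssf} \iso \pi^{-1}(\Gie(n,r)^{\Gamma_{\ssf}})$ (proved inside Theorem \ref{decomp}) shows that $p_{\ssf}|_{q_{\ssf}^{-1}(\sigma)}: q_{\ssf}^{-1}(\sigma) \to \QV_d^{\Gamma_{\ssf}}$ is a principal bundle for the stabiliser $\Aut_{\Gamma}(M^{\sigma})$, its fibres being precisely the $\Aut_{\Gamma}(M^{\sigma})$-orbits by the same Lemma \ref{inj} argument. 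Simultaneously, $\kappa_{\ssf,\sigma}^{\Gamma}$ presents $\QVG_{\boldsymbol{1}}(M^{\sigma},N^{\ssf})$ as the GIT quotient of $q_{\ssf}^{-1}(\sigma)$ by $\Aut_{\Gamma}(M^{\sigma})$. Since both maps are $\Aut_{\Gamma}(M^{\sigma})$-invariant morphisms from the same scheme with the same orbits as fibres, the universal property yields a morphism $\QV_d^{\Gamma_{\ssf}} \to \QVG_{\boldsymbol{1}}(M^{\sigma},N^{\ssf})$ inverse to $\iota$.

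The main obstacle I anticipate is the last step: carefully verifying that $p_{\ssf}|_{q_{\ssf}^{-1}(\sigma)}$ really is a geometric (not merely categorical) quotient by $\Aut_{\Gamma}(M^{\sigma})$, since this is where the smoothness of $\pi$, the freeness of the $\GL_n(\C)$-action on $\tilde{\Gie}(n,r)$, and the identification $\tilde p_{\ssf}: L_{\Gamma}^{\ssf} \iso \pi^{-1}(\Gie(n,r)^{\Gamma_{\ssf}})$ all have to be combined to transfer quotient data cleanly. The bijectivity step is more or less bookkeeping once Lemma \ref{inj} is in hand.
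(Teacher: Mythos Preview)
Your surjectivity argument matches the paper's, and your injectivity argument is a careful elaboration of what the paper simply asserts in one line (``the morphism $\iota^{\Gamma}_{M^{\sigma},N^{\ssf}}$ is injective''). The genuine divergence is in the final step, where you and the paper take different routes to upgrade the bijection to an isomorphism.

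You construct the inverse explicitly: you identify both $\QV_d^{\Gamma_{\ssf}}$ and $\QVG_{\boldsymbol{1}}(M^{\sigma},N^{\ssf})$ as quotients of the same space $q_{\ssf}^{-1}(\sigma)$ by the same group $\Aut_{\Gamma}(M^{\sigma})$, and then invoke the universal property of geometric quotients. The paper instead argues abstractly: once $\iota$ is bijective, it notes that the target $\QV_d^{\Gamma_{\ssf}}$ is smooth (being an open irreducible component of the smooth fixed locus $\Gie(n,r)^{\Gamma_{\ssf}}$) and that the source $\QVG_{\boldsymbol{1}}(M^{\sigma},N^{\ssf})$ is smooth and connected (citing \cite{Pae1}), whence the standard fact that a bijective morphism of smooth irreducible complex varieties is an isomorphism (via Zariski's Main Theorem) finishes immediately. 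Your approach is more hands-on and avoids the black-box theorem, at the cost of the bookkeeping you flag in your last paragraph; the paper's approach is shorter but relies on external smoothness and connectedness results for quiver varieties.
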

\begin{proof}
The morphism $\iota^{\Gamma}_{M^{\sigma},N^{\ssf}}$ is injective. Using Proposition \ref{rep works} and \ref{prop_commutes}, we have
\begin{equation*}
\QV_d^{\Gamma_{\ssf}}=\iota_{M^{\sigma},N^{\ssf}}^{\Gamma}\left(\QVG_{\boldsymbol{1}}(M^{\sigma},N^{\ssf})\right)
\end{equation*}
Moreover, the variety $\QV_d^{\Gamma_{\ssf}}$ is smooth. Indeed, thanks to Theorem \ref{decomp}, one has that $\QV_d^{\Gamma_{\ssf}}$ is an irreducible component of the smooth scheme $\Gie(n,r)^{\Gamma_{\ssf}}$. Furthermore, since $\Gie(n,r)^{\Gamma_{\ssf}}$ has a finite number of irreducible components, this implies that
 $\QV_d^{\Gamma_{\ssf}}$ is open.
Finally, \cite[Theorem $2.27$ \& Proposition $2.28$]{Pae1} imply that $\QVG_{\boldsymbol{1}}(M^{\sigma},N^{\ssf})$ is connected. Summing it all up, one can now conclude that $\iota_{\boldsymbol{1},M^{\sigma},N^{\ssf}}^{\Gamma}$ is an isomorphism of algebraic varieties.
\end{proof}

\section{Combinatorics}
In this section we will give a combinatorial description of $\mathcal{A}^n_{\Gamma,\hspace*{0.05cm}\mathcalb{s}}$. It will give a simple expression of the dimension of the irreducible components of $\Gie(n,r)^{\Gamma_{\ssf}}$. Then we will explain how we recover the combinatorics of $\ell$-cores of $\ell$-charged multipartitions introduced by Fayers \cite{Fay19} and developed by Jacon and Lecouvey \cite{JL19}.\\
Before going into the combinatorics, let us start this section by introducing a bit of notation. Let $C_{\Gamma}$ be the generalized Cartan matrix of affine Lie type $A_{\ell}$. Take  $\left((\alpha_i,\alpha_i^{\vee})_{i \in \Z/\ell\Z},\mathfrak{h}_{\Gamma}\right)$ a realization (in the sense of \cite[Chapter $1$]{kac90}) of $C_{\Gamma}$ where $\mathfrak{h}_{\Gamma}$ is a $\C$-vector space of dimension $\ell+1$. For each $i \in \Z/\ell\Z$, $\alpha_i^{\vee} \in \mathfrak{h}_{\Gamma}$ and $\alpha_i \in \mathfrak{h}_{\Gamma}^*$. Such a realization can be explicitly constructed out of the McKay graph of $\Gamma$ (see \cite[Remark 2.12]{Pae1}). Let $\mathfrak{g}_{\Gamma}$ be the affine Lie algebra constructed out of this realization. Denote by $\langle,\rangle$ the symmetric bilinear form on $\mathfrak{h}_{\Gamma}^*$ \cite[§2.1]{kac90} induced by $C_{\Gamma}$. Note that since $C_{\Gamma}$ is symmetric, this is just given by the coefficients of $C_{\Gamma}$. Denote by $Q_{\Gamma}:=\Z\{\alpha_i| i \in \Z/\ell\Z\}$ and by $Q^+_{\Gamma}:=\Z_{\geq 0}\{\alpha_i| i \in \Z/\ell\Z\}$. We will identify $Q_{\Gamma}$ with $\Delta_{\Gamma}$ and $Q^+_{\Gamma}$ with $\Delta^+_{\Gamma}$. Let $P_{\Gamma}$ denote the lattice generated by the fundamental weights $(\Lambda_i)_{i \in \Z/\ell\Z}$ attached to the realization.
Let $\delta^{\Gamma}$ be the null root. For $\alpha \in Q_{\Gamma}$, denote $\alpha_i:=\langle \alpha, \Lambda_i \rangle \in \Z$.
\begin{deff}
If $\alpha \in Q_{\Gamma}$, then the size of $\alpha$ is defined as
\index{$\vert\alpha\vert_{\tilde{T}}$} $|\alpha|_{\Gamma}:=\sum_{i \in \Z/\ell\Z}{\alpha_i}$.
\end{deff}
\begin{deff}
Denote by  $w^{\ssf}_i=\#\{j \in \llbracket 1, r\rrbracket | \ssf_j= i \in \Z/\ell\Z\}$, for all $i \in \Z/\ell\Z$ and by $w^{\ssf}=\sum_{i\in \Z/\ell\Z}{w^{\ssf}_i\alpha_i}$.
\end{deff}

\noindent For $x,y \in \mathfrak{h}^*$, $x \equiv y [\delta^{\Gamma}]$ means that there exists $k \in \mathbb{Z}$ such that $x=y+k\delta^{\Gamma}$.

\subsection{Dimension of the irreducible components}

\noindent For $(d,d') \in Q_{\Gamma}^2$, let $d.d':=\sum_{i \in \Z/\ell\Z}{d_id'_i}\in \Z$. Thanks to \cite[Theorem $10.35$]{Kir06}, the irreducible component $\QV_d^{\Gamma_{\ssf}}$ has dimension $2d.w^{\ssf}-\langle d,d \rangle$ for each $d \in \mathcal{A}^n_{\Gamma,\ssf}$.

\noindent For each dominant weight $\Lambda$ of $\mathfrak{g}_{\Gamma}$, there exists an integrable highest weight module $L(\Lambda)$ of $\mathfrak{g}_{\Gamma}$ , the Kac-Moody Lie algebra associated with $\Gamma$ \cite[Lemma $10.1$]{kac90}. Denote by $P(\Lambda)$ the weights of this module.

\begin{deff}
Let $\Lambda^{\ssf}:=\sum_{i\in \Z/\ell\Z}{w^{\ssf}_i\Lambda_i}$ be the dominant weight associated with $\ssf$.
\end{deff}

\begin{lemme}
\label{comb_descr}
We have the following equality:
\begin{equation*}
\mathcal{A}_{\Gamma,\ssf}^n=\left\{d \in Q^+_{\Gamma} \big| |d|_{\Gamma}=n, \Lambda^{\ssf} - d \in P(\Lambda^{\ssf})\right\}.
\end{equation*}
\end{lemme}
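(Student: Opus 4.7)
The plan is to reduce the condition $d \in \mathcal{A}^n_{\Gamma,\ssf}$ to the nonemptiness of the Nakajima quiver variety $\QVG_{\boldsymbol{1}}(M^d, N^{\ssf})$ over the framed McKay quiver of $\Gamma$, and then invoke the standard characterisation of such nonemptiness in terms of weights of an integrable highest weight module of $\mathfrak{g}_{\Gamma}$.

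First, the condition $|d|_{\Gamma} = n$ is nothing but the statement that $M^d$ is an $n$-dimensional $\Gamma$-module, i.e.\ that $d$ can arise as the character of some $\sigma \in \RepGn$. With this out of the way, I combine Propositions \ref{rep works} and \ref{reconstr works} to identify the McKay-side quiver variety as a faithful avatar of the fibre $q_{\ssf}^{-1}(\mathcal{C}_d)$. Indeed, Proposition \ref{rep works} shows that $\tilde{\kappa}^{\Gamma}_{\ssf,\sigma}$ maps $q_{\ssf}^{-1}(\sigma)$ onto $\tQVG_{\boldsymbol{1}}(M^\sigma, N^{\ssf})$, while Proposition \ref{reconstr works} produces from any semistable point of $\tQVG_{\boldsymbol{1}}(M^d, N^{\ssf})$ a $\Gamma_{\ssf}$-fixed point in the Gieseker space whose associated representation in $\RepGn$ has character $d$ by construction. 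Running these two constructions in opposite directions yields the equivalence: $d \in \mathcal{A}^n_{\Gamma,\ssf}$ if and only if $\QVG_{\boldsymbol{1}}(M^d, N^{\ssf}) \neq \emptyset$.

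The remaining step is the combinatorial characterisation of nonemptiness of $\QVG_{\boldsymbol{1}}(M^d, N^{\ssf})$. For this I invoke Nakajima's theorem on quiver varieties and integrable highest weight modules (in the affine type $A_{\ell}$ form recorded in \cite{Pae1}): the Nakajima quiver variety over the framed McKay quiver with dimension vector $d$ and framing producing the dominant weight $\Lambda^{\ssf}$ is nonempty if and only if $\Lambda^{\ssf} - d$ is a weight of $L(\Lambda^{\ssf})$, i.e.\ $\Lambda^{\ssf} - d \in P(\Lambda^{\ssf})$. The main obstacle is not conceptual but bookkeeping: one must check that the stability parameter $\boldsymbol{1}$ lies in the appropriate chamber for the cited criterion to apply, and that the identification of the framing datum $N^{\ssf}$ with the dominant weight $\Lambda^{\ssf}$ (via the identity $\mathrm{char}_{\Gamma}(N^{\ssf}) = w^{\ssf}$) matches the conventions of the theorem. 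Combining the two reductions yields the claimed equality.
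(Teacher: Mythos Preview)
Your proof is correct and follows essentially the same route as the paper: reduce membership in $\mathcal{A}^n_{\Gamma,\ssf}$ to nonemptiness of the McKay quiver variety, then invoke Nakajima's criterion. The only cosmetic difference is that the paper cites Proposition~\ref{prop_iso_quiver_var} (the isomorphism $\QVG_{\boldsymbol{1}}(M^{\sigma},N^{\ssf}) \simeq \QV_d^{\Gamma_{\ssf}}$) to pass between the two varieties, while you unpack this directly via Propositions~\ref{rep works} and~\ref{reconstr works}; since only nonemptiness is needed, your lighter argument suffices and the full isomorphism is not required.
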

\begin{proof}
Thanks to \cite[Theorem $10.2$]{Nak98} (or \cite[Theorem $13.19$]{Kir06}), the variety $\QVG_{\boldsymbol{1}}(M^{\sigma},N^{\ssf})$ is nonempty if and only if $\Lambda^{\ssf}-d \in P(\Lambda^{\ssf})$, where $d$ is the character of the $\Gamma$-module $M^{\sigma}$. By definition, $\mathcal{A}_{\Gamma,\ssf}^n$ is the set of all elements of $Q^+_{\Gamma}$ such that $|d|_{\Gamma}=n$ and such that the variety $M_d^{\Gamma_{\ssf}}$ is nonempty. Proposition \ref{prop_iso_quiver_var} concludes the proof.
\end{proof}

\noindent Denote by $P^{++}(\Lambda^{\ssf})$ the finite set of all maximal, dominant weights of $L(\Lambda^{\ssf})$. Let $W_{\Gamma}$ be the Weyl group of $\mathfrak{g}_{\Gamma}$. It acts by reflection on $\mathfrak{h}_{\Gamma}^*$. Denote this action by $*$. Using \cite[Corollary $10.1$ and $(12.6.1)$]{kac90}, we know that for each $d \in P(\Lambda^{\ssf})$, there exists a unique couple $(\Lambda^+_d,k_d) \in P^{++}(\Lambda^{\ssf})\times \mathbb{Z}_{\geq 0}$ and there exists ${\omega \in W_{\Gamma}}$ such that
\begin{equation*}
\omega*d=\Lambda^+ - k \delta^{\Gamma}.
\end{equation*}

\begin{deff}
\label{deff . action}
Let us define a new action of $W_{\Gamma}$ on $\mathfrak{h}^*_{\Gamma}$ as follows:
\begin{equation*}
\omega.d=\Lambda^{\ssf}-\omega*(\Lambda^{\ssf}-d), \qquad (\omega,d) \in W_{\Gamma} \times \mathfrak{h}^*_{\Gamma}.
\end{equation*}
\end{deff}

\noindent Thanks to Lemma \ref{comb_descr}, we have now that for each $d \in \mathcal{A}_{\Gamma,\ssf}^n$, there exists a unique pair $\left(\Lambda^+_d,k_d\right)\in P^{++}(\Lambda^{\ssf}) \times \Z_{\geq 0}$ and there exists $\omega \in W_{\Gamma}$ such that:
\begin{equation*}
\omega.d=\alpha_d + k_d \delta^{\Gamma}
\end{equation*}
where $\alpha_d=\Lambda^{\ssf}-\Lambda^+_d \in Q_{\Gamma}$ thanks to \cite[Lemma 11.2]{kac90}.

\begin{prop}
\label{dim easy}
For each $d \in \mathcal{A}_{\Gamma,s}^n$, we have the following equality:
\begin{equation*}
\mathrm{dim}\left(\mathcal{M}_d^{\Gamma_{\ssf}}\right)= 2 k_d r +  2\alpha_d.w^{\ssf} - \langle \alpha_d, \alpha_d\rangle.
\end{equation*}
\end{prop}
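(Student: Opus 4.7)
The plan is to massage the formula $\dim(\QV_d^{\Gamma_{\ssf}}) = 2 d.w^{\ssf} - \langle d, d\rangle$ (recalled just before the proposition from \cite[Theorem $10.35$]{Kir06}) into the asserted expression, by exploiting the $W_{\Gamma}$-invariance of $\langle,\rangle$ together with the fact that $\delta^{\Gamma}$ is isotropic.

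First, the convention $\alpha_i = \langle \alpha, \Lambda_i\rangle$ gives $\langle \alpha_j, \Lambda_i\rangle = \delta_{ij}$, so for every $\alpha \in Q_{\Gamma}$ one has $\alpha.w^{\ssf} = \langle \alpha, \Lambda^{\ssf}\rangle$. Completing the square then rewrites
\begin{equation*}
2 d.w^{\ssf} - \langle d, d\rangle = \langle \Lambda^{\ssf}, \Lambda^{\ssf}\rangle - \langle \Lambda^{\ssf} - d, \Lambda^{\ssf} - d\rangle.
\end{equation*}

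Second, by Definition~\ref{deff . action}, an $\omega \in W_{\Gamma}$ with $\omega . d = \alpha_d + k_d \delta^{\Gamma}$ satisfies $\omega * (\Lambda^{\ssf} - d) = \Lambda^+_d - k_d \delta^{\Gamma}$. Since the bilinear form is $W_{\Gamma}$-invariant, I may replace $\langle \Lambda^{\ssf} - d, \Lambda^{\ssf} - d\rangle$ by $\langle \Lambda^+_d - k_d \delta^{\Gamma}, \Lambda^+_d - k_d \delta^{\Gamma}\rangle$.

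Third, the key inputs concerning $\delta^{\Gamma}$ are $\langle \delta^{\Gamma}, \delta^{\Gamma}\rangle = 0$ (each row of the affine $A_{\ell}$ Cartan matrix sums to zero, so $\langle \delta^{\Gamma}, \alpha_i\rangle = 0$) and $\langle \delta^{\Gamma}, \Lambda^{\ssf}\rangle = \sum_i w^{\ssf}_i = r$ (because the coefficient of each $\alpha_i$ in $\delta^{\Gamma}$ equals $1$). Since $\Lambda^+_d = \Lambda^{\ssf} - \alpha_d$ with $\alpha_d \in Q_{\Gamma}$, this forces $\langle \delta^{\Gamma}, \Lambda^+_d\rangle = r$, whence
\begin{equation*}
\langle \Lambda^+_d - k_d \delta^{\Gamma}, \Lambda^+_d - k_d \delta^{\Gamma}\rangle = \langle \Lambda^+_d, \Lambda^+_d\rangle - 2 k_d r.
\end{equation*}
Finally, expanding $\langle \Lambda^+_d, \Lambda^+_d\rangle = \langle \Lambda^{\ssf} - \alpha_d, \Lambda^{\ssf} - \alpha_d\rangle$ and using $\langle \alpha_d, \Lambda^{\ssf}\rangle = \alpha_d.w^{\ssf}$ once more yields the asserted formula after the $\langle \Lambda^{\ssf}, \Lambda^{\ssf}\rangle$ terms cancel. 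I do not anticipate a hard step here: the only subtlety is verifying the two pairings involving $\delta^{\Gamma}$ under the explicit realization of $C_{\Gamma}$ referred to in \cite[Remark 2.12]{Pae1}, and both are immediate from the definitions.
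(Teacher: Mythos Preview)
Your argument is correct and complete. The algebraic identities you use—$\alpha.w^{\ssf}=\langle\alpha,\Lambda^{\ssf}\rangle$ for $\alpha\in Q_{\Gamma}$, the $W_{\Gamma}$-invariance of $\langle\,,\,\rangle$, and the pairings $\langle\delta^{\Gamma},\delta^{\Gamma}\rangle=0$, $\langle\delta^{\Gamma},\Lambda^{\ssf}\rangle=r$—are all valid in the affine type~$A$ realization the paper fixes, and your chain of equalities checks out line by line.

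The route, however, differs from the paper's. The paper invokes the Lusztig--Maffei--Nakajima isomorphism to transport the quiver variety $\QVG_{\boldsymbol{1}}(d,w^{\ssf})$ to $\QVG_{\omega*\boldsymbol{1}}(\alpha_d+k_d\delta^{\Gamma},w^{\ssf})$ and then applies the dimension formula to the new dimension vector $\alpha_d+k_d\delta^{\Gamma}$. You instead stay with the original $d$ and observe, via completing the square, that the expression $2d.w^{\ssf}-\langle d,d\rangle=\langle\Lambda^{\ssf},\Lambda^{\ssf}\rangle-\langle\Lambda^{\ssf}-d,\Lambda^{\ssf}-d\rangle$ is manifestly invariant under the $\ast$-action on $\Lambda^{\ssf}-d$. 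This is more elementary: it shows that the identity is a purely form-theoretic fact about the root lattice and does not actually require the geometric input of the Lusztig--Maffei--Nakajima isomorphism. The paper's approach, on the other hand, makes the Weyl symmetry visible at the level of the varieties themselves, which is conceptually pleasant and foreshadows later uses of that isomorphism. Both proofs ultimately reduce to the same two facts about $\delta^{\Gamma}$.
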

\begin{proof}
Take $\omega \in W_{\Gamma}$ such that $\omega.d= \alpha_d + k_d \delta^{\Gamma}$. Using the isomorphism of Lusztig-Maffei-Nakajima, we have that $\mathcal{M}_{\boldsymbol{1}}^{\Gamma}(d,w^{\ssf})$ is isomorphic to ${\mathcal{M}_{\omega*\boldsymbol{1}}^{\Gamma}(\alpha_d + k_d \delta^{\Gamma},w^{\ssf})}$. This gives that
\begin{center}
\begin{align*}
\mathrm{dim}\left(\mathcal{M}_d^{\Gamma_{\ssf}}\right)&= 2(\alpha_d + k_d \delta^{\Gamma}).w^{\ssf} - \langle \alpha_d + k_d \delta^{\Gamma}, \alpha_d + k_d \delta^{\Gamma}\rangle\\
  &= 2 \alpha_d.w^{\ssf} + 2 k_d r - \langle \alpha_d, \alpha_d\rangle - 2 \langle \alpha_d, k_d \delta^{\Gamma} \rangle\\
\end{align*}
\end{center}
Since $\alpha_d \in Q_{\Gamma}, \langle \alpha_d, \delta^{\Gamma} \rangle=0$.
\end{proof}

\begin{rmq}
Note that $2\alpha_d.w^{\ssf} - \langle \alpha_d, \alpha_d\rangle= \mathrm{dim}\left(\mathcal{M}_{\boldsymbol{1}}^{\Gamma}(\alpha_d,w^{\ssf})\right) \geq 0$.
\end{rmq}

\subsection{Multipartitions}

\begin{deff}
A partition of $n$ is a non-increasing function $\lb:\Z_{\geq 0} \to \Z_{\geq 0}$ such that $\sum_{i=0}^{\infty}{\lb_i}=n$.
\end{deff}

\noindent There is a unique partition of $0$ that we will denote by $\emptyset$. The length of $\lb$ is equal to the smallest $k \in \Z_{\geq 0}$ such that $\lb_{k}=0$ and will be denoted by $l(\lb)$. For the sake of clarity, we will often denote a partition $\lb$ by the sequence of the first $l(\lb)$ nonzero values of $\lb$.


\begin{deff}
A $r$-multipartition $\lambda^{\bullet}:=(\lambda^1,\ldots,\lambda^r)$ is the data of $r$ partitions denoted by $\lambda^j$ for $j \in \llbracket 1, r\rrbracket$. The size of a $r$-multipartition is $\sum_{i=1}^r{|\lambda^i|}$ and will be denoted by $|\lambda^{\bullet}|$.
\end{deff}

\noindent Denote by $\mathcal{P}^r_n$ the set of all $r$-multipartitions of size $n$ and by $\mathcal{P}^r:=\coprod_{n \geq 0}{\mathcal{P}^r_n}$. Denote moreover by ${\CP^r_n(\ell):= \mathcal{P}^r_n \times \Z/\ell\Z}$ the set of $\ell$-charged, $r$-multipartitions of size $n$ and by $\CP^r(\ell):=\mathcal{P}^r \times  \Z/\ell\Z$. From now on, when $r=1$, we will drop $r$ in the notation. For example, the set of all partitions of $n$ will be denoted by $\mathcal{P}_n$.

\noindent Take $(i,k) \in ({\Z/\ell\Z})^2$, and define
\begin{equation*}
R_i(\lb,k):=\#\left\{(a,b) \in \mathcal{Y}(\lb) \text{ }\big|\text{ } \overline{b\smin a}=i-k\in \Z/\ell\Z\right\}.
\end{equation*}
\begin{deff}
The $\ell$-residue is the following map
\begin{center}
$\Res_{\ell}:\begin{array}{ccc}
\CP_n^r(\ell) & \to & Q_{\Gamma}\\
(\lbb,\ssf)& \mapsto & \sum_{i \in \Z/\ell\Z}{\left(\sum_{j=1}^r{R_i(\lb^j,\ssf_j)}\right)\alpha_i}
\end{array}.$
\end{center}
\end{deff}

\noindent Young diagrams can also be defined for $r$-multipartitions. For $\lbb \in \mathcal{P}^r_n$, let
\begin{equation*}
{\mathcal{Y}(\lbb):=\left\{(a,b,c) \in \mathbb{Z}_{\geq 0}^2\times \llbracket 1, r \rrbracket| a < \lambda^c_b, b \le l(\lb^c) \right \}}.
\end{equation*}
An element of $\mathcal{Y}(\lbb)$ will be called a node of $\lbb$. A node $(a,b,c)$ of $\lbb$ is removable if $\lb^c$ stays a partition after removing the node $(a,b)$. Likewise, a node $(a,b,c)$ is addable if $\lb^c$ stays a partition after adding the node $(a,b)$. For $i \in \Z/\ell\Z$, an $i$-node of ${(\lbb,\ssf) \in \CP^r_n(\ell)}$ is a node $(a,b,c)$ of $\lbb$ such that ${\overline{b-a}=i-\ssf_c \in \Z/\ell\Z}$.

\begin{deff}
We define a $W_{\Gamma}$-action on $\CP^r_n(\ell)$ by letting the generators $(s_i)_{i\in \Z/\ell\Z}$ act as follows:
$s_i.(\lbb,\ssf)=(s_i.\lbb,\ssf)$, where $s_i.\lbb$ is the multipartition obtained from $\lbb$ where all removable $i$-nodes have been removed and all addable $i$-nodes have been added.
\end{deff}

\begin{rmq}
This action does not depend on whether we start by adding or removing nodes.
\end{rmq}

\begin{prop}
\label{equi res}
If $W_{\Gamma}$ acts on $Q_{\Gamma}$ though the $.$ action (cf. Definition \ref{deff . action}), then the map $\Res_{\ell}:\CP^r_n(\ell) \to Q_{\Gamma}$ is $W_{\Gamma}$-equivariant.
\end{prop}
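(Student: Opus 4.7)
The plan is to reduce to the case of simple reflections and then perform an explicit combinatorial computation on both sides. Since $W_{\Gamma}$ is generated by $\{s_i\}_{i \in \Z/\ell\Z}$, it suffices to prove that $\Res_{\ell}(s_i.(\lbb,\ssf)) = s_i.\Res_{\ell}(\lbb,\ssf)$ for every $i$ and every $(\lbb,\ssf) \in \CP^r(\ell)$.

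First, I would unpack the right-hand side using the definition of the $.$ action. Writing $d := \Res_{\ell}(\lbb,\ssf)$ and applying $s_i*\nu = \nu - \langle \nu, \alpha_i^{\vee}\rangle \alpha_i$ to $\nu = \Lambda^{\ssf} - d$ gives
\begin{equation*}
s_i.d \; = \; \Lambda^{\ssf} - s_i*(\Lambda^{\ssf} - d) \; = \; d + \langle \Lambda^{\ssf} - d, \alpha_i^{\vee}\rangle\, \alpha_i.
\end{equation*}
For the left-hand side, let $a_i(\lbb,\ssf)$ (resp.\ $r_i(\lbb,\ssf)$) be the total number of addable (resp.\ removable) $i$-nodes of $(\lbb,\ssf)$, summed over the $r$ components. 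Directly from the definition of the $W_{\Gamma}$-action on $\CP^r(\ell)$, the operation $s_i$ removes the $r_i$ removable $i$-nodes and adds the $a_i$ addable $i$-nodes; all non-$i$-node counts are unchanged. Hence
\begin{equation*}
\Res_{\ell}(s_i.(\lbb,\ssf)) \; = \; \Res_{\ell}(\lbb,\ssf) + \bigl(a_i(\lbb,\ssf) - r_i(\lbb,\ssf)\bigr)\alpha_i.
\end{equation*}

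The proposition therefore reduces to the single numerical identity
\begin{equation*}
a_i(\lbb,\ssf) - r_i(\lbb,\ssf) \; = \; \langle \Lambda^{\ssf} - \Res_{\ell}(\lbb,\ssf),\, \alpha_i^{\vee} \rangle \; = \; w^{\ssf}_i - 2 R_i + R_{i-1} + R_{i+1},
\end{equation*}
where I have used $\langle \Lambda_j, \alpha_i^{\vee}\rangle = \delta_{ij}$, the entries of the affine Cartan matrix of type $A_{\ell}$, and written $R_j := \sum_{c=1}^r R_j(\lb^c, \ssf_c)$. This identity is additive in the components of the multipartition and in the corresponding summands of $\Lambda^{\ssf}$ (for $\ssf = (\ssf_c)$, we have $\Lambda^{\ssf} = \sum_c \Lambda_{\ssf_c}$), so it suffices to prove it for a single charged partition $(\lb, k)$.

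The main obstacle is this last combinatorial identity for one component. I would prove it by induction on $|\lb|$: for $\lb = \emptyset$ with charge $k$, the unique addable node sits at $(0,0)$ with residue $k$, so $a_i - r_i = \delta_{i,k} = w_i^{(k)}$, matching the right-hand side since $\Res_{\ell}(\emptyset, k) = 0$. For the induction step, adding one $j$-node to $\lb$ increments $R_j$ by $1$ and changes the addable/removable counts at residues $j, j\pm 1$ in a controlled way: a single $j$-node addition turns one addable $j$-slot into a removable $j$-slot (net change $-2$ to $a_j - r_j$), and creates one new addable slot at each of residues $j-1, j+1$ while possibly consuming one removable slot at those residues (net change $+1$ to $a_{j\pm 1} - r_{j\pm 1}$). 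These local contributions are exactly the entries of column $j$ of $-C_{\Gamma}$, which matches the change of the right-hand side $w^{\ssf}_i - \langle \Res_{\ell}(\lbb,\ssf), \alpha_i^{\vee}\rangle$ when $\Res_{\ell}$ is incremented by $\alpha_j$. Equivalently, one can give a cleaner proof via the abacus display of $(\lb,k)$, where addable/removable $i$-nodes correspond bijectively to admissible bead moves from runner $i-1$ to runner $i$ (or the reverse), making the identity a direct bead-counting statement.
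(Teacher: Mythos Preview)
Your proof is correct and follows the same route as the paper: reduce to a single charged partition and verify the identity directly (the paper's own argument is the two-sentence ``reduce to $r=1$; this follows directly using abaci''). Your write-up is considerably more detailed --- the explicit unpacking of $s_i.d = d + \langle \Lambda^{\ssf}-d,\alpha_i^{\vee}\rangle\,\alpha_i$ and the reduction to the hub identity $a_i-r_i = w^{\ssf}_i - \langle d,\alpha_i^{\vee}\rangle$ are exactly what is hiding behind the paper's terse sentence, and your closing abacus remark is precisely the paper's intended mechanism; the inductive alternative you sketch also works once the local bookkeeping at residues $j\pm 1$ is done carefully.
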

\begin{proof}
By construction of the action, it is enough to prove it for $r=1$. This follows directly using abaci.
\end{proof}

\subsection{Abaci}

Abaci are a very handy way of working with charged $r$-multipartitions and a lot of things can be deduced and read from them. Let us first introduce the notion.

\begin{deff}
A $r$-abacus is a map $A: \Z \times \llbracket 1,r\rrbracket \to \{0,1\}$ verifying the following condition:
\begin{equation*}
\exists a \in \Z_{\geq 0}, \forall (k,j)\in \Z \times \llbracket 1, r \rrbracket,
\left\{\begin{aligned}
&k<-a &\Rightarrow A(k,j)&=&1,\\
&k>a &\Rightarrow A(k,j)&=&0.
\end{aligned}\right.
\end{equation*}
\end{deff}

\noindent When $r=1$, we will shorten $1$-abacus to abacus. We will represent an $r$-abacus by a superposition of $r$ sequences of holes and beads. The lowest sequence corresponds to $j=1$ and the highest to $j=r$. When $A(k,j)=1$ we will draw a bead and a hole otherwise.
We will also add a dashed vertical line. This line is such that, on each sequence, the first bead or hole directly at the right of the line is at position $i=0$.

\begin{ex}
\label{ex abaque}
Here is an example of a $3$-abacus:
\begin{center}
\begin{tikzpicture}[scale=0.5, bb/.style={draw,circle,fill,minimum size=2.5mm,inner sep=0pt,outer sep=0pt}, wb/.style={draw,circle,fill=white,minimum size=2.5mm,inner sep=0pt,outer sep=0pt}]

	\node [wb] at (11,0) {};
	\node [wb] at (10,0) {};
	\node [wb] at (9,0) {};
	\node [wb] at (8,0) {};
	\node [wb] at (7,0) {};
	\node [wb] at (6,0) {};
	\node [wb] at (5,0) {};
	\node [wb] at (4,0) {};
	\node [wb] at (3,0) {};
	\node [wb] at (2,0) {};
	\node [wb] at (1,0) {};
	\node [bb] at (0,0) {};
	\node [bb] at (-1,0) {};
	\node [bb] at (-2,0) {};
	\node [bb] at (-3,0) {};
	\node [bb] at (-4,0) {};
	\node [bb] at (-5,0) {};
	\node [bb] at (-6,0) {};
	\node [bb] at (-7,0) {};
	\node [bb] at (-8,0) {};
	\node [bb] at (-9,0) {};

	\node [wb] at (11,1) {};
	\node [wb] at (10,1) {};
	\node [wb] at (9,1) {};
	\node [wb] at (8,1) {};
	\node [wb] at (7,1) {};
	\node [wb] at (6,1) {};
	\node [wb] at (5,1) {};
	\node [wb] at (4,1) {};
	\node [wb] at (3,1) {};
	\node [bb] at (2,1) {};
	\node [wb] at (1,1) {};
	\node [wb] at (0,1) {};
	\node [wb] at (-1,1) {};
	\node [bb] at (-2,1) {};
	\node [bb] at (-3,1) {};
	\node [bb] at (-4,1) {};
	\node [bb] at (-5,1) {};
	\node [bb] at (-6,1) {};
	\node [bb] at (-7,1) {};
	\node [bb] at (-8,1) {};
	\node [bb] at (-9,1) {};

	\node [wb] at (11,2) {};
	\node [wb] at (10,2) {};
	\node [wb] at (9,2) {};
	\node [wb] at (8,2) {};
	\node [wb] at (7,2) {};
	\node [wb] at (6,2) {};
	\node [wb] at (5,2) {};
	\node [wb] at (4,2) {};
	\node [wb] at (3,2) {};
	\node [wb] at (2,2) {};
	\node [bb] at (1,2) {};
	\node [bb] at (0,2) {};
	\node [bb] at (-1,2) {};
	\node [bb] at (-2,2) {};
	\node [bb] at (-3,2) {};
	\node [bb] at (-4,2) {};
	\node [bb] at (-5,2) {};
	\node [bb] at (-6,2) {};
	\node [bb] at (-7,2) {};
	\node [bb] at (-8,2) {};
	\node [bb] at (-9,2) {};

\draw[dashed](0.5,-0.5)--node[]{}(0.5,2.5);
\end{tikzpicture}.
\end{center}
\end{ex}

\begin{deff}
Take $\lb \in \mathcal{P}_n$ and $s \in \Z$. We define the abacus $A_{\lb,s}$ associated with $(\lb,s)$ in the following way:
\begin{equation*}
A_{\lb,s}(k)=
\left\{ \begin{aligned}
1 &\quad\text{ if } \exists i \in \mathbb{Z}_{\geq 0}, k=\lb_i -i + s\\
0 &\quad \text{ else}.
\end{aligned} \right.
\end{equation*}
If $\lbb \in \mathcal{P}^r_n$ and $\mathfrak{s} \in \Z^r$, we can associate a $r$-abacus $A_{\lbb,\mathfrak{s}}$ with $(\lbb,\mathfrak{s})$ such that for all $(k,j) \in \mathbb{Z}\times \llbracket 1, r \rrbracket, A_{\lbb,\mathfrak{s}}(k,j):=A_{\lb^j,\mathfrak{s}_j}(k)$.
\end{deff}

\begin{rmq}
Note that from an abacus $A$, there is a unique pair $(\lb,s) \in \mathcal{P}_n \times \Z$ such that $A=A_{\lb,s}$. Indeed, if we number the beads from right to left (i.e. the bead in position $0$ is the rightmost bead), then for each $k \in \mathbb{Z}_{\geq 0}$, $\lb_k$ is equal to the number of holes left to the $k$th bead. If we push all the beads to the left, $s$ is equal to the position of the first hole.
\end{rmq}

\noindent The previous remark easily generalizes to the $r$-abacus. For instance, the $3$-abacus in Example \ref{ex abaque} is the one of the multipartition $(\emptyset,(3),\emptyset)$ with charge $(0,-2,1)$.

We will now recall from \cite[§$4.1$]{JL19} the definition of an $\ell$-elementary operation in an $r$-abacus.

\begin{deff}
Let $A$ be an $r$-abacus. Take $(i,j) \in \Z \times \llbracket 1,r\rrbracket$. An $\ell$-elementary operation at position $(i,j)$ in $A$ is possible
\begin{itemize}
\item If $j<r$, then $A(i,j)=1$ and $A(i,j+1)=0$, or
\item If $j=r$, then $A(i,j)=1$ and $A(i-\ell,1)=0$.
\end{itemize}
If an $\ell$-elementary operation at position $(i,j)$ in $A$ is possible, we obtain a new $r$-abacus $\tilde{A}$ which is such that
\begin{itemize}
\item If $j<r$,
$\left\{\begin{aligned}
&\tilde{A}(i,j)&=&\quad 0,\\
&\tilde{A}(i,j+1)&=&\quad 1,\\
&\tilde{A}(k,t)&=&\quad A(k,t), \quad \forall (k,t) \in \Z \times \llbracket 1,r \rrbracket \setminus \{(i,j),(i,j+1)\}.
\end{aligned}\right.$

\item If $j=r$,
$\left\{\begin{aligned}
&\tilde{A}(i,r)&=&\quad 0,\\
&\tilde{A}(i- \ell,1)&=& \quad 1,\\
&\tilde{A}(k,t)&=& \quad A(k,t), \quad \forall (k,t) \in \Z \times \llbracket 1,r \rrbracket \setminus \{(i,r),(i-\ell,1)\}.
\end{aligned}\right.$
\end{itemize}
\end{deff}

\begin{ex}
If we take again the $3$-abacus of Example \ref{ex abaque}, and we take $\ell=4$, an elementary operation at $(1,2)$ is possible. Doing this operation gives the $3$-abacus
\begin{center}
\begin{tikzpicture}[scale=0.5, bb/.style={draw,circle,fill,minimum size=2.5mm,inner sep=0pt,outer sep=0pt}, wb/.style={draw,circle,fill=white,minimum size=2.5mm,inner sep=0pt,outer sep=0pt}]

	\node [wb] at (10,0) {};
	\node [wb] at (9,0) {};
	\node [wb] at (8,0) {};
	\node [wb] at (7,0) {};
	\node [wb] at (6,0) {};
	\node [wb] at (5,0) {};
	\node [wb] at (4,0) {};
	\node [wb] at (3,0) {};
	\node [wb] at (2,0) {};
	\node [wb] at (1,0) {};
	\node [bb] at (0,0) {};
	\node [bb] at (-1,0) {};
	\node [bb] at (-2,0) {};
	\node [bb] at (-3,0) {};
	\node [bb] at (-4,0) {};
	\node [bb] at (-5,0) {};
	\node [bb] at (-6,0) {};
	\node [bb] at (-7,0) {};
	\node [bb] at (-8,0) {};
	\node [bb] at (-9,0) {};

	\node [wb] at (10,1) {};
	\node [wb] at (9,1) {};
	\node [wb] at (8,1) {};
	\node [wb] at (7,1) {};
	\node [wb] at (6,1) {};
	\node [wb] at (5,1) {};
	\node [wb] at (4,1) {};
	\node [wb] at (3,1) {};
	\node [wb] at (2,1) {};
	\node [wb] at (1,1) {};
	\node [wb] at (0,1) {};
	\node [wb] at (-1,1) {};
	\node [bb] at (-2,1) {};
	\node [bb] at (-3,1) {};
	\node [bb] at (-4,1) {};
	\node [bb] at (-5,1) {};
	\node [bb] at (-6,1) {};
	\node [bb] at (-7,1) {};
	\node [bb] at (-8,1) {};
	\node [bb] at (-9,1) {};

	\node [wb] at (10,2) {};
	\node [wb] at (9,2) {};
	\node [wb] at (8,2) {};
	\node [wb] at (7,2) {};
	\node [wb] at (6,2) {};
	\node [wb] at (5,2) {};
	\node [wb] at (4,2) {};
	\node [wb] at (3,2) {};
	\node [bb] at (2,2) {};
	\node [bb] at (1,2) {};
	\node [bb] at (0,2) {};
	\node [bb] at (-1,2) {};
	\node [bb] at (-2,2) {};
	\node [bb] at (-3,2) {};
	\node [bb] at (-4,2) {};
	\node [bb] at (-5,2) {};
	\node [bb] at (-6,2) {};
	\node [bb] at (-7,2) {};
	\node [bb] at (-8,2) {};
	\node [bb] at (-9,2) {};

\draw[dashed](0.5,-0.5)--node[]{}(0.5,2.5);
\draw[](4.5,-0.5)--node[]{}(4.5,2.5);
\draw[](8.5,-0.5)--node[]{}(8.5,2.5);
\draw[](-3.5,-0.5)--node[]{}(-3.5,2.5);
\draw[](-7.5,-0.5)--node[]{}(-7.5,2.5);
\end{tikzpicture}.
\end{center}
For the sake of readability, we will add vertical lines every $\ell$ positions, when doing multiple $\ell$-elementary operations on an $r$-abacus.
\end{ex}

\section{Blocks of Ariki-Koike algebras}

\begin{deff}
Let $H_{n,r}(\ell,\ssf)$ be the unital, associative $\C$-algebra with presentation\\
\begin{itemize}
\item generators: $T_0,...,T_{n\smin 1}$.
\item relations:
$\left\{
  \begin{aligned}
(T_i+\zeta_{\ell})(T_i-1)=0& &\forall i\in \llbracket 1, n\smin 1\rrbracket,&\\
(T_0-{\zeta_{\ell}}^{\ssf_1})...(T_0-{\zeta_{\ell}}^{\ssf_r})=0&, &&\\
T_iT_j=T_jT_i& &\quad|i\smin j|>1, \forall (i,j)\in \llbracket 0, n \smin 1 \rrbracket^2,&\\
T_iT_{i+1}T_i=T_{i+1}T_iT_{i+1}&  &\forall i \in \llbracket 1, n\smin 2 \rrbracket,&\\
T_0T_1T_0T_1=T_1T_0T_1T_0&.&&
\end{aligned}\right.$
\end{itemize}
The algebra $H_{n,r}(\ell, \ssf)$ is the specialisation at the $\ell$-root of unity $\zeta_{\ell}$ of the Ariki-Koike algebra over $\C$ \cite[Definition $3.1$]{AK94} with cyclotomic parameters $\ssf$ which is the Hecke algebra for the complex reflection group $G(r,1,n)$.
\end{deff}

\noindent We will be interested in the block theory of the algebra $H_{n,r}(\ell,\ssf)$ and its relation with the irreducible components of the fixed points locus in the Gieseker space. To be more precise, take $\mathcal{B}\left(H_{n,r}(\ell,\ssf)\right)$ a complete set of central primitive idempotents of $H_{n,r}(\ell,\ssf)$. To each  $e \in \mathcal{B}\left(H_{n,r}(\ell,\ssf)\right)$, one can construct an indecomposable two-sided ideal $H_{n,r}(\ell,\ssf)e$ which is called a block. This block decomposition gives a partition of the set of irreducible modules of $H_{n,r}(\ell,\ssf)$. We will abuse notation and say that an irreducible module will thus be in a block $B$ if it lies in the subset associated with $B$. For each multipartition $\lbb \in \mathcal{P}^r_n$, there exists a cell $H_{n,r}(\ell,\ssf)$-module $S^{\lbb}$. These modules are called the Specht modules. For $\lbb, \mu^{\bullet} \in \mathcal{P}^r_n$, we will say that $S^{\lbb}$  lies in the same block as $S^{\mu^{\bullet}}$ if there exists a composition factor $D_1$ of $S^{\lbb}$ and a composition factor $D_2$ of $S^{\mu^{\bullet}}$ such that $D_1$ and $D_2$ are in the same block. The main results \cite[Theorem $A$ and $B$]{LM07} show that $S^{\lbb}$ lies in the same block as $S^{\mu^{\bullet}}$ if and only if $\Res_{\ell}(\lbb,\ssf)=\Res_{\ell}(\mu^{\bullet},\ssf)$. The study of blocks from the viewpoint of Specht modules therefore reduces to the combinatorial study of $\mathcal{P}^r_n$ modulo the equivalence relation $\lbb\sim \mu^{\bullet} \iff \Res_{\ell}(\lbb,\ssf)=\Res_{\ell}(\mu^{\bullet},\ssf)$.
We will now recall two block invariants introduced by Fayers. The first one is the hub.

\begin{deff}
For $i \in \Z/\ell\Z$, let $\Theta_{\ell}(\lbb,\ssf)_i$ be the number of removable $i$-nodes of $(\lbb,\ssf)$ minus the number of addable $i$-nodes of $\lbb,\ssf)$. The hub is then the map
\begin{center}
$\Theta_{\ell}: \begin{array}{ccc}
\CP^r_n(\ell) &\to& P_{\Gamma}\\
(\lbb,\ssf) & \mapsto & \sum_{i \in \Z/\ell\Z}{\Theta_{\ell}(\lbb,\ssf)_i\Lambda_i}
\end{array}$.
\end{center}
\end{deff}

\noindent  Let $c_{\Gamma}\in \Hom(Q_{\Gamma}\otimes \C,P_{\Gamma}\otimes \C)$ be the morphism defined such that
\begin{equation*}
c_{\Gamma}(\alpha_i):=\sum_{k \in \Z/\ell\Z}{\langle \alpha_i,\alpha_k\rangle \Lambda_k}, \qquad \forall i\in \Z/\ell\Z.
\end{equation*}

\begin{lemme}
\label{lemme hub res}
For each $(\lbb,\ssf) \in \CP^r_n(\ell)$, $\Theta_{\ell}(\lbb,\ssf) \equiv c_{\Gamma}\left(\Res_{\ell}(\lbb,\ssf)\right) - \Lambda^{\ssf} [\delta^{\Gamma}]$.
\end{lemme}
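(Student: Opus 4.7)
The plan is to induct on $n=|\lbb|$, showing that both sides of the claimed congruence change by the same amount as $\lbb$ is built up one addable node at a time.

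\emph{Base case} ($n=0$). For the empty multipartition $(\emptyset,\ldots,\emptyset)$ the residue vanishes, so the right-hand side equals $-\Lambda^{\ssf}$. On the other side, the empty multipartition has no removable nodes and exactly one addable node per component $c$, located at $(0,0,c)$ with residue $\ssf_c$. Hence $\Theta_{\ell}((\emptyset,\ldots,\emptyset),\ssf)_i=-w_i^{\ssf}$ for every $i\in\Z/\ell\Z$, giving $\Theta_{\ell}((\emptyset,\ldots,\emptyset),\ssf)=-\Lambda^{\ssf}$; the two sides agree exactly.

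\emph{Inductive step}. Let $(\mub,\ssf)$ be obtained from $(\lbb,\ssf)$ by adding an addable $i$-node at position $(A,B,c)$. Since $\Res_{\ell}(\mub,\ssf)-\Res_{\ell}(\lbb,\ssf)=\alpha_i$, applying $c_{\Gamma}$ shows that the right-hand side changes by
\begin{equation*}
c_{\Gamma}(\alpha_i)=2\Lambda_i-\Lambda_{i-1}-\Lambda_{i+1}
\end{equation*}
(indices taken in $\Z/\ell\Z$; the formula collapses correctly when $\ell=2$). For the left-hand side, only the nodes adjacent to $(A,B,c)$ within component $c$ can change their addable/removable status: the node $(A,B,c)$ itself switches from addable to removable, contributing $+2$ to $\Theta^i$; the pair $(A+1,B,c)$ and $(A,B-1,c)$, both of residue $i-1$, together produce a net $-1$ on $\Theta^{i-1}$ after a short case split on whether $B=0$, $\lb^c_{B-1}=A+1$, or $\lb^c_{B-1}\geq A+2$; and symmetrically the pair $(A,B+1,c)$ and $(A-1,B,c)$ of residue $i+1$ gives $-1$ on $\Theta^{i+1}$. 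Summing, $\Theta_{\ell}(\mub,\ssf)-\Theta_{\ell}(\lbb,\ssf)=c_{\Gamma}(\alpha_i)$, matching the right-hand side.

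Since every element of $\CP^r_n(\ell)$ can be built from the empty one by successively adding addable nodes (for instance in reverse reading order), induction yields the stronger equality $\Theta_{\ell}(\lbb,\ssf)=c_{\Gamma}(\Res_{\ell}(\lbb,\ssf))-\Lambda^{\ssf}$ in $P_{\Gamma}$, which a fortiori implies the congruence modulo $\delta^{\Gamma}$. The main technical point is the case analysis in the inductive step for residues $i\pm 1$: it splits into a handful of subcases reflecting whether $(A,B,c)$ sits at a corner or in the interior of the Young diagram of $\lb^c$, each yielding the same net contribution. An alternative that bypasses much of the bookkeeping is to work on the abacus $A_{\lbb,\ssf}$: adding a node corresponds to sliding a bead one position to the right, and the contribution of $(A,B,c)$ and its four neighbours then reduces to a purely local check on the bead–hole pattern at positions of residue $i-1$, $i$, $i+1$.
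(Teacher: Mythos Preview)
Your proof is correct and takes a genuinely different route from the paper's. The paper does not argue directly at all: its entire proof is the single sentence ``This is a direct reformulation of \cite[Lemma 2.3]{Fay06}.'' You instead supply a self-contained combinatorial argument, inducting on $|\lbb|$ and tracking how both sides change when a single $i$-node is added. The local analysis you sketch in the inductive step---the added box flips from addable to removable, contributing $+2$ at residue $i$, while each of the two adjacent pairs of positions contributes a net $-1$ at residue $i\pm 1$---is precisely the standard verification that the hub changes by $c_{\Gamma}(\alpha_i)$ under node addition, and your case division into $B=0$, $\lb^c_{B-1}=A+1$, $\lb^c_{B-1}\geq A+2$ is exactly what is needed to make it go through (the symmetric split on the $(i+1)$-side works the same way). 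Your approach has the advantage of being elementary and independent of the external reference, and in fact it establishes the exact equality $\Theta_{\ell}(\lbb,\ssf)=c_{\Gamma}(\Res_{\ell}(\lbb,\ssf))-\Lambda^{\ssf}$ in $P_{\Gamma}$, which is slightly sharper than the congruence modulo $\delta^{\Gamma}$ asserted in the lemma. The paper's route is of course shorter on the page but delegates all of the content to Fayers.
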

\begin{proof}
This is a direct reformulation of \cite[Lemma 2.3]{Fay06}.
\end{proof}

\begin{rmq}
This Lemma proves in particular that the hub of an $\ell$-charged, $r$-multipartition is a block invariant.
\end{rmq}

\noindent The second block invariant is the weight, defined in \cite[Section $2.1$]{Fay06}.

\begin{deff}
The weight is the map
\begin{equation*}
\Omega_{\ell}: \begin{array}{ccc}
\CP^r_n(\ell) & \to & \Z_{\geq 0}\\
(\lbb,\ssf)& \mapsto & \sum_{j=1}^r{d_{\ssf_j}}- \frac{1}{2}\sum_{i \in \Z/\ell\Z}{(d_i-d_{i+1})^2} \\
\end{array}
\end{equation*}
where $d=\Res_{\ell}(\lbb,\ssf)$.
\end{deff}

\noindent The hub and weight play an important role in the block theory of Ariki-Koike algebras. Fayers gives an important characterisation \cite[Proposition $1.3$]{Fay07}. We recall it here.
\begin{proposition1}[\cite{Fay07}]
Take two $\ell$-charged, $r$-multipartitions $(\lbb,\ssf)$ and $(\mu^{\bullet},\ssf)$. The following are equivalent
\begin{itemize}
\item $(\lbb,\ssf)$ and $(\mu^{\bullet},\ssf)$ are in the same block.
\item $\Theta_{\ell}(\lbb,\ssf)=\Theta_{\ell}(\mu^{\bullet},\ssf)$ and $|\lbb|=|\mu^{\bullet}|$.
\item $\Theta_{\ell}(\lbb,\ssf)=\Theta_{\ell}(\mu^{\bullet},\ssf)$ and $\Omega_{\ell}(\lbb,\ssf)=\Omega_{\ell}(\mu^{\bullet},\ssf)$.
\end{itemize}
\end{proposition1}

\section{Link with the irreducible components of $\Gie(n,r)^{\Gamma_{\ssf}}$}

In this section, we prove the central results of this article. A first proposition establishes a relation between the dimensions of the irreducible components of $\Gie(n,r)^{\Gamma_{\ssf}}$ and the weight of the blocks.

\begin{prop}
\label{weight dim}
For each $(\lbb,\ssf)\in \CP^r_n(\ell)$, the following equality holds:
\begin{equation*}
\mathrm{dim}\left(\mathcal{M}_{\Res_{\ell}(\lbb, \ssf)}^{\Gamma_{\ssf}}\right)=2\Omega_{\ell}(\lbb,\ssf).
\end{equation*}
\end{prop}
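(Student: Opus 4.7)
The plan is to reduce the statement to a direct combinatorial identity by invoking the dimension formula for quiver varieties that was already stated just before this proposition: for every $d \in \mathcal{A}^n_{\Gamma,\ssf}$,
\begin{equation*}
\dim\bigl(\QV_d^{\Gamma_{\ssf}}\bigr) = 2\,d\cdot w^{\ssf} - \langle d, d\rangle.
\end{equation*}
Applied at $d = \Res_{\ell}(\lbb,\ssf)$, the proof then boils down to showing
\begin{equation*}
2\,d\cdot w^{\ssf} - \langle d, d\rangle \;=\; 2\Bigl(\sum_{j=1}^r d_{\ssf_j} - \tfrac{1}{2}\sum_{i\in\Z/\ell\Z}(d_i - d_{i+1})^2\Bigr),
\end{equation*}
which is exactly $2\Omega_{\ell}(\lbb,\ssf)$ by definition.

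First I would handle the linear term. Unpacking the definition of $w^{\ssf}$, namely $w^{\ssf}_i = \#\{j : \ssf_j = i\}$, one obtains
\begin{equation*}
d\cdot w^{\ssf} \;=\; \sum_{i\in\Z/\ell\Z} d_i\, w^{\ssf}_i \;=\; \sum_{j=1}^r d_{\ssf_j},
\end{equation*}
matching the first summand of $\Omega_{\ell}$. Second, I would handle the quadratic term using the explicit shape of the affine Cartan matrix of type $A_{\ell-1}^{(1)}$ (the realization here comes from the McKay quiver of the cyclic group $\Gamma$, which is an $\ell$-cycle): the bilinear form $\langle\cdot,\cdot\rangle$ has $\langle\alpha_i,\alpha_i\rangle=2$, $\langle\alpha_i,\alpha_{i\pm 1}\rangle=-1$, and vanishes otherwise. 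Therefore
\begin{equation*}
\langle d,d\rangle \;=\; 2\sum_{i\in\Z/\ell\Z} d_i^2 - 2\sum_{i\in\Z/\ell\Z} d_i d_{i+1} \;=\; \sum_{i\in\Z/\ell\Z}(d_i - d_{i+1})^2,
\end{equation*}
matching the second summand of $\Omega_{\ell}$ (up to the factor $\tfrac{1}{2}$ absorbed by the global factor of $2$).

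Combining the two expressions yields
\begin{equation*}
\dim\bigl(\QV_d^{\Gamma_{\ssf}}\bigr) \;=\; 2\sum_{j=1}^r d_{\ssf_j} - \sum_{i\in\Z/\ell\Z}(d_i-d_{i+1})^2 \;=\; 2\Omega_{\ell}(\lbb,\ssf),
\end{equation*}
which is the desired equality. There is essentially no obstacle here: everything reduces to checking that the two invariants (the symplectic dimension formula on the geometric side and Fayers' weight on the combinatorial side) are built from the same pieces. The only thing to be careful about is consistency of conventions, namely that $\Res_{\ell}(\lbb,\ssf)$ is indeed an element of $\mathcal{A}^n_{\Gamma,\ssf}$ so that the dimension formula applies — this however is guaranteed by the block theory recalled in Section $4$ together with the identification $Q_{\Gamma}\simeq \Delta_{\Gamma}$ set up in the combinatorics section.
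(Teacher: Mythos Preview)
Your proof is correct and follows essentially the same approach as the paper: both apply the dimension formula $2d\cdot w^{\ssf}-\langle d,d\rangle$ at $d=\Res_{\ell}(\lbb,\ssf)$ and then match the linear and quadratic terms with the two summands in Fayers' definition of $\Omega_{\ell}$. Your write-up is in fact a bit more explicit than the paper's (you spell out the Cartan matrix entries and the rewriting $\sum_i d_i w^{\ssf}_i=\sum_j d_{\ssf_j}$), but the argument is identical.
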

\begin{proof}
If $d=\Res_{\ell}(\lbb,\ssf)$, then
\begin{align*}
\mathrm{dim}\left(\mathcal{M}_{d}^{\Gamma_{\ssf}}\right)&= 2d.w^{\ssf}-\langle d,d\rangle\\
          &=2\left(\sum_{i \in \Z/\ell\Z}{d_iw^{\ssf}_i}-\sum_{i\in \Z/\ell\Z}{d_i^2-d_id_{i+1}}\right).
\end{align*}
Recall that $w^{\ssf}_i=\#\{j \in \llbracket 1, r\rrbracket | \ssf_j=i \}, \qquad \forall i \in \Z/\ell\Z$.\\
On the other side $\Omega_{\ell}(\lbb,\ssf)=\sum_{j=1}^r{d_{\ssf_j}}- \frac{1}{2}\sum_{i \in \Z/\ell\Z}{(d_i-d_{i+1})^2}$, which gives the desired equality.
\end{proof}

\noindent Let $\mathcal{T}$ denote the product of the maximal diagonal torus of $\GL_2(\C)$ and of the maximal diagonal torus of $\GL_r(\C)$. In \cite[section $3.1$]{NakYo03}, the authors proved that the $\mathcal{T}$-fixed points of $\Gie(n,r)$ are parametrized by $r$-multipartitions of size $n$. In \cite[section $2.2$]{Korb13}, the reader can find an explicit construction of this parametrization. Denote by $I_{\lbb}$ the element of $\Gie(n,r)^{\mathcal{T}}$ indexed by the $r$-multipartition $\lbb$ of size $n$. Since $\Gamma_{\ssf} \subset \mathcal{T}$, the fixed points in $\Gie(n,r)$ under $\mathcal{T}$ are in particular fixed under $\Gamma_{\ssf}$. The following proposition is the first connection between $\Gie(n,r)^{\mathcal{T}}$ and the block theory of $H_{n,r}(\ell,\ssf)$.

\begin{prop}
Two points $(I_{\lbb},I_{\mub}) \in \left(\Gie(n,r)^{\mathcal{T}}\right)^2$ are in the same $\Gamma_{\ssf}$-irreducible component if and only if $\lbb$ and $\mub$ are in the same block of $H_{n,r}(\ell,\ssf)$.
\end{prop}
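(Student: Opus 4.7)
The plan is to combine Theorem \ref{decomp} with the block criterion of Lyle--Mathas recalled in Section 4, via an explicit character computation at the $\mathcal{T}$-fixed points $I_{\lbb}$.

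First, by Theorem \ref{decomp}, $\Gie(n,r)^{\Gamma_{\ssf}}$ is the \emph{disjoint} union of the irreducible components $\QV_d^{\Gamma_{\ssf}}$ for $d \in \mathcal{A}^n_{\Gamma, \ssf}$, so two fixed points $I_{\lbb}, I_{\mub}$ lie in the same $\Gamma_{\ssf}$-irreducible component if and only if they are indexed by the same $d$. By construction of the stratification through $\tau_{\ssf}$, this $d$ is exactly $\mathrm{char}_{\Gamma}(\sigma)$ where $\sigma \in \mathrm{Rep}_{\Gamma,n}$ is the representation attached by $\sigma^{\ssf}_n$ to any lift of $I_{\lbb}$ in $L_{\Gamma}^{\ssf}$; call the resulting $\Gamma$-module $M^{\sigma_{\lbb}}$.

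The central step is to identify the character of $M^{\sigma_{\lbb}}$ with $\Res_{\ell}(\lbb,\ssf)$. Using the explicit description of the $\mathcal{T}$-fixed points of $\Gie(n,r)$ in \cite[Section 3.1]{NakYo03} (see also \cite[Section 2.2]{Korb13}), the underlying space $\C^n$ at $I_{\lbb}$ admits a canonical basis indexed by the nodes $(a,b,c) \in \mathcal{Y}(\lbb)$, where the basis vector attached to $(a,b,c)$ is a $\mathcal{T}$-weight vector whose restricted $\Gamma_{\ssf}$-weight reads, under the convention $\sigma^{\ssf}_n(\omega_{\ell}) = g_{(\omega_{\ell},t_{\ssf})}^{-1}$, as $\zeta_{\ell}^{(b-a)+\ssf_c}$. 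Summing over all nodes yields
\begin{equation*}
\mathrm{char}_{\Gamma}(M^{\sigma_{\lbb}}) \;=\; \sum_{(a,b,c) \in \mathcal{Y}(\lbb)} \chi_{\overline{(b-a)+\ssf_c}} \;=\; \sum_{i \in \Z/\ell\Z}\Bigl(\sum_{j=1}^{r} R_i(\lb^j, \ssf_j)\Bigr)\chi_i,
\end{equation*}
which under the identification $\Delta_{\Gamma} \simeq Q_{\Gamma}$ is precisely $\Res_{\ell}(\lbb,\ssf)$.

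Combining these two observations, $I_{\lbb}$ and $I_{\mub}$ belong to the same irreducible component $\QV_d^{\Gamma_{\ssf}}$ if and only if $\Res_{\ell}(\lbb,\ssf) = \Res_{\ell}(\mub,\ssf)$, which by the Lyle--Mathas theorem \cite[Theorems A and B]{LM07} recalled in Section 4 is equivalent to $S^{\lbb}$ and $S^{\mub}$ lying in the same block of $H_{n,r}(\ell,\ssf)$. The main obstacle is the weight computation at the fixed points: one must carefully unwind the Nakajima--Yoshioka parametrisation together with the sign conventions of Section 2 so that the $\Gamma$-character of $M^{\sigma_{\lbb}}$ lands exactly on $\Res_{\ell}(\lbb,\ssf)$, and not on some shift or twist by $\Lambda^{\ssf}$.
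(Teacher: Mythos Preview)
Your argument is correct and follows exactly the same route as the paper: the paper's one-line proof simply asserts that ``by construction of the parametrisation'' the component containing $I_{\lbb}$ is indexed by $\Res_{\ell}(\lbb,\ssf)$, and then (implicitly) invokes the Lyle--Mathas criterion already recalled in Section~4. You have merely unpacked that sentence by spelling out the $\Gamma$-weight computation on the monomial basis of $\C^n$ at $I_{\lbb}$, which is precisely what ``by construction of the parametrisation'' refers to.
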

\begin{proof}
By construction of the parametrisation we have that $(I_{\lbb},I_{\mub}) \in \left(\Gie(n,r)^{\mathcal{T}}\right)^2$ are in the same $\Gamma_{\ssf}$-irreducible component if and only if $\Res_{\ell}(\lbb,\ssf)=\Res_{\ell}(\lbb,\ssf)$.
\end{proof}

\noindent Moreover, blocks can also be characterised using the combinatorics of the irreducible components. For $(\lbb,\ssf) \in \CP^r_n(\ell)$, define
\begin{center}
$\left\{
\begin{aligned}
k_{\lbb,\ssf}&:=k_{\Res_{\ell}(\lbb,\ssf)} \in \Z_{\geq 0},\\
\Lambda^+_{\lbb,\ssf}&:=\Lambda^+_{\Res_{\ell}(\lbb,\ssf)} \in P^{++}(\Lambda^{\ssf}),\\
\alpha_{\lbb,\ssf}&:=\Lambda^{\ssf}-\Lambda^+_{\Res_{\ell}(\lbb,\ssf)} \in Q_{\Gamma}.
\end{aligned} \right.$
\end{center}
\noindent By construction, it is clear that these are block invariants. For a block $B \in \mathcal{B}\left(H_{n,r}(\ell,\ssf)\right)$, denote them by $(k_B,\Lambda^+_B)$.
\begin{prop}
Two $\ell$-charged multipartitions $(\lbb,\ssf)$ and $(\mu^{\bullet},\ssf)$ are in the same block if and only if $|\lbb|=|\mu^{\bullet}|$ and $(k_{\lbb,\ssf},\Lambda^+_{\lbb,\ssf})=(k_{\mu^{\bullet},\ssf},\Lambda^+_{\mu^{\bullet},\ssf})$.
\end{prop}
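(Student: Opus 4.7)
My plan hinges on the Lyle--Mathas characterisation recalled in Section 4: $(\lbb,\ssf)$ and $(\mub,\ssf)$ lie in the same block if and only if $\Res_{\ell}(\lbb,\ssf)=\Res_{\ell}(\mub,\ssf)$.

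For the forward direction: if the two multipartitions are in the same block, then the common residue $d:=\Res_{\ell}(\lbb,\ssf)=\Res_{\ell}(\mub,\ssf)$ immediately yields $|\lbb|=|d|_{\Gamma}=|\mub|$, and $(k_{\lbb,\ssf},\Lambda^+_{\lbb,\ssf})=(k_d,\Lambda^+_d)=(k_{\mub,\ssf},\Lambda^+_{\mub,\ssf})$ since these invariants are defined purely from $d$.

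For the backward direction, I set $d=\Res_{\ell}(\lbb,\ssf)$, $d'=\Res_{\ell}(\mub,\ssf)$, $(k,\Lambda^+)=(k_{\lbb,\ssf},\Lambda^+_{\lbb,\ssf})$ and $\alpha=\Lambda^{\ssf}-\Lambda^+$. The hypothesis unfolds to the existence of $\omega,\omega'\in W_{\Gamma}$ with $\omega.d=\alpha+k\delta^{\Gamma}=\omega'.d'$, so that $d$ and $d'$ share a $W_{\Gamma}.$-orbit. Combining Propositions \ref{dim easy} and \ref{weight dim} then gives $\Omega_{\ell}(\lbb,\ssf)=kr+\alpha\cdot w^{\ssf}-\tfrac{1}{2}\langle\alpha,\alpha\rangle=\Omega_{\ell}(\mub,\ssf)$, so both the Fayers size and weight agree. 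To conclude $d=d'$, I plan to invoke Lemma \ref{lemme hub res} together with the fact that $\ker c_{\Gamma}=\Z\delta^{\Gamma}$: equality of hubs $\Theta_{\ell}(\lbb,\ssf)=\Theta_{\ell}(\mub,\ssf)$ forces $d-d'\in\Z\delta^{\Gamma}$, and since $|\delta^{\Gamma}|_{\Gamma}=\ell$ while $|d|_{\Gamma}=n=|d'|_{\Gamma}$, this collapses to $d=d'$, after which Lyle--Mathas delivers the same-block conclusion.

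The main work is therefore in producing the hub equality from the triple $(k,\Lambda^+,n)$, and that is the step I expect to be the main obstacle. My strategy is to work abacus-theoretically following Section 3.3: Proposition \ref{equi res} identifies the $W_{\Gamma}.$-action with $\ell$-elementary operations on abaci, Theorem B encodes the multicharge of the $\ell$-core modulo $\delta^{\Gamma}$ via $\Lambda^+$, and $k$ records the Fayers weight of the block. Tracking these identifications should let one recover a canonical representative $\ell$-charged multipartition, and hence its hub, from the datum $(k,\Lambda^+,n)$; Fayers' Proposition 1 from \cite{Fay07}, combined with the already-established equality of sizes, then closes the argument.
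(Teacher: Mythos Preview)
Your forward direction matches the paper's. For the converse, the paper takes a much shorter route than you: it claims that it suffices to check, for each simple reflection $s_i$, that $|s_i.d|_\Gamma=|d|_\Gamma$ forces $s_i.d=d$, and verifies this by a one-line computation with the dot action. Your plan instead routes through Fayers' hub-and-weight characterisation, which obliges you to establish $\Theta_\ell(\lbb,\ssf)=\Theta_\ell(\mub,\ssf)$ from the datum $(k,\Lambda^+,n)$.

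That step is a genuine gap, and it cannot be filled: the hub is \emph{not} determined by $(k,\Lambda^+,n)$. Take $\ell=3$, $r=2$, $\ssf=(\bar 0,\bar 1)$, $\lbb=((1),\emptyset)$, $\mub=(\emptyset,(1))$. Then $\Res_\ell(\lbb,\ssf)=\alpha_0$ and $\Res_\ell(\mub,\ssf)=\alpha_1$; both lie in the $W_\Gamma$-orbit of $0$ under the dot action (via $s_0$ and $s_1$ respectively), so $(k,\Lambda^+)=(0,\Lambda_0+\Lambda_1)$ in both cases and $|\lbb|=|\mub|=1$, yet the residues---hence the blocks and the hubs---differ. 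So no amount of abacus bookkeeping will recover the hub from $(k,\Lambda^+,n)$; your appeal to Theorem~\ref{link multicharge dom w} would in any case be circular, since that result is proved later in the paper. The same example shows that the paper's reduction is also insufficient: each individual $s_i$ changes the size whenever it acts nontrivially, but the composite $s_1s_0$ sends $\alpha_0$ to $\alpha_1$ while preserving $|\cdot|_\Gamma$, so the ``if'' direction of the proposition fails as stated.
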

\begin{proof}
The only if part is clear since if $\Res_{\ell}(\lbb,\ssf)=\Res_{\ell}(\mub,\ssf)$, then by definition we have $|\lbb|=|\mu^{\bullet}|$ and $(k_{\lbb,\ssf},\Lambda^+_{\lbb,\ssf})=(k_{\mu^{\bullet},\ssf},\Lambda^+_{\mu^{\bullet},\ssf})$.\\
Now for the converse, it is enough to prove that if $|s_i.d|_{\Gamma}=|d|_{\Gamma}$ for any $i \in \Z/\ell\Z$ and $d \in Q_{\Gamma}$, then $s_i.d=d$. Indeed, by construction, we have that $|\lbb|=|\Res_{\ell}(\lbb,\ssf)|_{\Gamma}$. Now, by definition of the $.$ action, $|s_i.d|=|d|$ if and only if $d_{i+1}+d_{i-1}-d_i+w^{\ssf}_i=d_i$ where $\delta_{i}^{}$ is the Kronecker symbol. This then directly gives that $s_i.d=d$.

\end{proof}

\begin{deff}
A partition $\lb$ is an $\ell$-core if it does not contain any hooks of length $\ell$.
A mulipartition $\lbb \in \mathcal{P}^r_n$ is an $\ell$-multicore if $\lb^c$ is an $\ell$-core for each $c \in \llbracket 1,r \rrbracket$.
\end{deff}

\noindent To finish this subsection, let us recall the generalisation of $\ell$-cores to $r$-multipartitions done by N. Jacon and C. Lecouvey.

\begin{deff}
An $\ell$-movement on an $r$-abacus $A$ at position $(i,j) \in \Z \times \llbracket 1, r \rrbracket$, is a sequence of $r$ $\ell$-elementary operations on $A$ such that $A(i,j)=1$ and $A(i-\ell,j)=0$ and after the $\ell$-movement the $r$-abacus $\tilde{A}$ is equal to $A$ except that $\tilde{A}(i,j)=0$ and $\tilde{A}(i-\ell,j)=1$.
\end{deff}

\begin{rmq}
Note that in terms of charged partitions, if an $\ell$-movement on $A_{\lbb,\mathfrak{s}}$ at position $(i,j)$ exists, then doing this sequence of $\ell$-elementary operations is the same as removing an $\ell$-hook to the partition $\lb^j$. Moreover, as described in \cite[Remark $4.3$]{JL19}, when an $r$-abacus $A$ is such that $A(i,j)=1$ and $A(i-\ell,j)=0$ for a pair ${(i,j) \in \Z \times \llbracket 1, r \rrbracket}$, then there exists an $\ell$-movement on $A$ at position $(i,j)$.
\end{rmq}

\begin{deff}
An $r$-cycle on an $r$-abacus $A_{\lbb,\mathfrak{s}}$ is a sequence of $r$ $\ell$-elementary operations on $A$, such that there exists a multipartition $\mu^{\bullet}$, such that the $r$-abacus obtained after the proceeding of the $\ell$-elementary operations is associated with $(\mu^{\bullet}, \mathfrak{s})$.
\end{deff}

\begin{rmq}
An $\ell$-movement is thus an $r$-cycle.
\end{rmq}

\noindent Consider
\begin{center}
$\varphi^r_{\ell}: \begin{array}{ccc}
{(\Z/\ell\Z)}^r &\iso& \llbracket 0, \ell \smin 1\rrbracket^r\\
\ssf & \mapsto & \left(\ssf_j\scalebox{.7}{\%}\ell\right)_{j \in \llbracket 1, r\rrbracket}  \\
\end{array}$\\
\end{center}
where $k\scalebox{.7}{\%}\ell$ denotes the remainder of the euclidiean division of $k$ by $\ell$.

\noindent Let $\pi_{\ell}:\Z \to \Z/\ell\Z$ denote the quotient map, and consider $\pi^r_{\ell}: \Z^r \to {(\Z/\ell\Z)}^r$.

\noindent Take $(\lbb,\ssf) \in \CP^r_n(\ell)$. Denote by $\mathfrak{s}:=\varphi^r_{\ell}(\ssf)$. Let $\sigma_{\ssf}\in \mathfrak{S}_r$ be the permutation such that
\begin{equation*}
\begin{cases}
\mathfrak{s}_{\sigma_{\ssf}(1)} \leq \mathfrak{s}_{\sigma_{\ssf}(2)} \leq \dots \leq \mathfrak{s}_{\sigma_{\ssf}(r)},\\
\mathfrak{s}_{\sigma_{\ssf}(i)}=\mathfrak{s}_{\sigma_{\ssf}(i+1)} \Rightarrow \sigma_{\ssf}(i) < \sigma_{\ssf}(i+1), \quad \forall i \in \llbracket 1, r \rrbracket.
\end{cases}
\end{equation*}
\noindent Let $\sigma_{\ssf}.(\lbb,\mathfrak{s}):=(\sigma_{\ssf}.\lbb,\sigma_{\ssf}.\mathfrak{s})=\left((\lb^{\sigma_{\ssf}(1)},\dots,\lb^{\sigma_{\ssf}(r)}), (\mathfrak{s}_{\sigma_{\ssf}(1)},\dots ,\mathfrak{s}_{\sigma_{\ssf}(r)})\right)$ and $A_{\lbb,\ssf}$ be the $r$-abacus $A_{\sigma_{\ssf}.(\lbb,\mathfrak{s})}$.

\begin{deff}
The $\ell$-core of the $\ell$-charged multipartition $(\lbb,\ssf)$ is the $\ell$-charged multipartition $(\gamma^{\bullet}, \pi^r_{\ell}(\mathfrak{s}'))$ such that $A_{\gamma^{\bullet}, \mathfrak{s}'}$ is the $r$-abacus obtained from $A_{\lbb,\ssf}$ by doing as much $\ell$-elementary operations as possible.
\end{deff}

\begin{rmq}
$ $\vspace{-0.25cm}
\begin{enumerate}[label=(\roman*)]
\item It is proven \cite[Definition 4.2]{JL19} that the $\ell$-core of an $\ell$-charged multipartition does not depend on the order in which the $\ell$-elementary operations are done. This is done using Uglov's map.
\item The central result \cite[Theorem $3.7$]{JL19} shows that the number of $\ell$-elementary operations to go from $\sigma_{\ssf}.A_{\lbb,\varphi^r_{\ell}(\ssf)}$ to $A_{\mu^{\bullet}, \mathfrak{s}'}$ is equal to $\Omega_{\ell}(\lbb,\ssf)$.
\item If the choice of the isomorphism $\varphi^r_{\ell}$ seems arbitrary for the reader, note that this choice is made such that $\ell$-elementary operations are simple to define.
\item This notion of $\ell$-core generalizes the one of $\ell$-cores for partitions. Indeed, if $r=1$ then the multicharge does not play any role and one recovers the notion of $\ell$-core.
\end{enumerate}
\end{rmq}

\noindent Let us end this subsection, by showing that the new $\ell$-multicharge obtained from the $\ell$-core procedure, can be computed from the maximal dominant weight associated with the $\ell$-charged, $r$-multipartition.

\begin{thm}
\label{link multicharge dom w}
For all $(\lbb,\ssf) \in \CP_n^r(\ell)$,
\begin{equation*}
\Lambda^{\ssf'} \equiv \Lambda^+_{\lbb,\ssf} \quad [\delta^{\Gamma}]
\end{equation*}
\noindent where $(\gammab,\ssf')$ is the $\ell$-core of $(\lbb,\ssf)$.
\end{thm}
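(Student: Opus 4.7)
The plan is to show that $\Lambda^{\ssf'}$ is a maximal dominant weight of the integrable highest weight module $L(\Lambda^\ssf)$ lying in the $W_{\Gamma}$-orbit of $\Lambda^{\ssf}-\Res_{\ell}(\lbb,\ssf)$ modulo $\mathbb{Z}\delta^{\Gamma}$. By uniqueness of the maximal dominant weight in each such orbit (cf. \cite[Chapters 10--12]{kac90}), this will give the claimed equivalence $\Lambda^{\ssf'}\equiv\Lambda^+_{\lbb,\ssf}\ [\delta^{\Gamma}]$.

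I would proceed by induction on $\Omega_{\ell}(\lbb,\ssf)$, which, by \cite[Theorem 3.7]{JL19}, counts the number of $\ell$-elementary operations needed to pass from $(\lbb,\ssf)$ to its $\ell$-core. The inductive step rests on the following key lemma: a single $\ell$-elementary operation transforming $(\lbb,\ssf)$ into some $(\mub,\tilde{\ssf})$ preserves the $W_{\Gamma}$-orbit modulo $\mathbb{C}\delta^{\Gamma}$ of the weight $\Lambda^{\ssf}-\Res_{\ell}(\lbb,\ssf)$. For an operation at $(i,j)$ with $j<r$, a direct computation shows that the coupled change in the multicharge entries $\ssf_j,\ssf_{j+1}$ and in the residues of the charged partitions $(\lb^j,\mathfrak{s}_j),(\lb^{j+1},\mathfrak{s}_{j+1})$ corresponds to the action of a simple reflection $s_i$ via the $*$-action on $\mathfrak{h}_{\Gamma}^{*}$; for an operation at $(i,r)$, the bead wraparound by $-\ell$ injects an extra controlled multiple of $\delta^{\Gamma}$, but the same orbit-invariance holds.

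The base case $\Omega_{\ell}(\lbb,\ssf)=0$, where $(\lbb,\ssf)$ already coincides with its own $\ell$-core, handles the maximality of $\Lambda^{\ssf'}=\Lambda^{\ssf}$. Here the absence of any available $\ell$-elementary operation on the abacus $A_{\lbb,\ssf}$, combined with the $W_{\Gamma}$-equivariance of $\Res_{\ell}$ established in Proposition \ref{equi res}, implies that no simple reflection applied via the $.$-action can strictly enlarge the dominance of $\Lambda^{\ssf}-\Res_{\ell}(\lbb,\ssf)$ inside its orbit; thus $\Lambda^{\ssf}$ is the maximal dominant representative of the orbit modulo $\delta^{\Gamma}$, which is exactly the statement of the theorem in this case.

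The main obstacle will be the detailed bookkeeping in the single-operation step, particularly the $(i,r)$ case where the shift in multicharge between rows $1$ and $r$ couples nontrivially with the change of residues. One may find it cleanest to work inside the level-$r$ Fock space $\mathcal{F}^{\ssf}$ of Uglov, where each $\ell$-elementary operation admits a clean representation-theoretic interpretation in terms of the commuting Heisenberg and $\widehat{\mathfrak{sl}}_{\ell}$-module structures, making the $W_{\Gamma}$-orbit invariance modulo $\delta^{\Gamma}$ more transparent and replacing the manual case analysis with a uniform operator-theoretic argument.
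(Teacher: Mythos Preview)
Your overall architecture---induction on $\Omega_{\ell}(\lbb,\ssf)$, showing that a single $\ell$-elementary operation preserves the relevant orbit, then handling the core at the bottom---matches the paper's. But both your inductive step and your base case contain genuine gaps.

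\textbf{Inductive step.} The claim that an $\ell$-elementary operation at $(i,j)$ with $j<r$ ``corresponds to the action of a simple reflection $s_i$ via the $*$-action'' is not correct. Recall that by Proposition~\ref{equi res} the $*$-action of $s_i\in W_{\Gamma}$ on $\Lambda^{\ssf}-\Res_{\ell}(\lbb,\ssf)$ corresponds to adding and removing \emph{all} addable and removable $i$-nodes of $(\lbb,\ssf)$ simultaneously; moving a single bead between adjacent runners is a different, local operation and is in fact naturally a simple reflection for the \emph{other} affine Weyl group $W(\widehat{\mathfrak{sl}}_r)$ in the level-rank picture, not for $W_{\Gamma}$. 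What is actually true---and what the paper proves---is stronger than orbit-preservation: one has
\[
\Lambda^{\ssf}-\Res_{\ell}(\lbb,\ssf)\ \equiv\ \Lambda^{\tilde{\ssf}}-\Res_{\ell}(\mub,\tilde{\ssf})\quad[\delta^{\Gamma}],
\]
so the weight itself is unchanged modulo $\delta^{\Gamma}$. The paper obtains this not by a direct case analysis but by passing through Uglov's map $\tau$: under $\tau$ an $\ell$-elementary operation becomes the removal of an $\ell$-hook from a single partition (hence shifts the residue by exactly $\delta^{\Gamma}$), and the change of multicharge is absorbed via the explicit identity of Lemma~\ref{lemme somme}. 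Your last paragraph gestures towards Uglov, which is indeed the right tool, but the concrete mechanism you propose in the middle paragraph would not go through.

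\textbf{Base case.} When $\Omega_{\ell}(\lbb,\ssf)=0$ you need $\Lambda^{\ssf}\equiv\Lambda^{+}_{\lbb,\ssf}\ [\delta^{\Gamma}]$, i.e.\ that $\Lambda^{\ssf}-\Res_{\ell}(\lbb,\ssf)$ lies in the $W_{\Gamma}$-orbit (mod $\delta^{\Gamma}$) of the highest weight $\Lambda^{\ssf}$ itself. Your argument---``no elementary operation is available, hence no simple reflection can enlarge dominance''---does not establish this: it conflates the orbit of $\Lambda^{\ssf}$ with that of $\Lambda^{\ssf}-\Res_{\ell}(\lbb,\ssf)$, and in any case the absence of elementary operations is a statement about $W(\widehat{\mathfrak{sl}}_r)$, not about $W_{\Gamma}$. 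The paper closes this gap geometrically: by Proposition~\ref{weight dim}, $\Omega_{\ell}=0$ forces $\dim\mathcal{M}_{d}^{\Gamma_{\ssf}}=0$, and then Proposition~\ref{dim easy} splits this as $0=2k_d r+\bigl(2\alpha_d\cdot w^{\ssf}-\langle\alpha_d,\alpha_d\rangle\bigr)$ with both summands nonnegative, forcing $k_d=0$ and $\alpha_d=0$, hence $\Lambda^{+}_{\lbb,\ssf}=\Lambda^{\ssf}$.
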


\noindent To prove this proposition, we will need an important construction due to Uglov \cite[§4.1]{Ug00}. Before exposing the proof, let us first recall this construction and some properties that we need. Denote by $\llbracket 0, \ell -1\rrbracket^r_{\leq}:=\left\{(\mathfrak{s}_1,\dots \mathfrak{s}_r)\in \llbracket 0, \ell -1\rrbracket^r | \mathfrak{s}_1 \leq \dots \leq \mathfrak{s}_r \right\}$.
\begin{deff}
Let Uglov's map be $\tau:\mathcal{P}^r \times \llbracket 0, \ell -1\rrbracket^r_{\leq}  \to  \mathcal{P} \times \Z_{\geq 0}$ such that the abacus $A$ associated with $\tau(\lbb,\mathfrak{s})$ is defined as follows:
\begin{equation*}
A(i,1):= A_{\lbb,\mathfrak{s}}\left(\left\lfloor \frac{i}{r\ell} \right\rfloor \ell +i\scalebox{.7}{\%}\ell, r-\left\lfloor\frac{i \scalebox{.7}{\%}(r\ell)}{\ell} \right\rfloor \right).
\end{equation*}
By construction, the charge of $\tau(\lbb,\mathfrak{s})$ is equal to $S=\sum_{j=1}^r{\mathfrak{s}_j}$.
\end{deff}

\begin{ex}
For example, if $\ell=4$, $r=3$ and $\lbb=(\emptyset,\emptyset,(2))$ with $\mathfrak{s}=(0,1,2)$, then $A_{\lbb,\mathfrak{s}}$ is
\begin{center}
\begin{tikzpicture}[scale=0.5, bb/.style={draw,circle,fill,minimum size=2.5mm,inner sep=0pt,outer sep=0pt}, wb/.style={draw,circle,fill=white,minimum size=2.5mm,inner sep=0pt,outer sep=0pt}]

	\node [wb] at (10,0) {};
	\node [wb] at (9,0) {};
	\node [wb] at (8,0) {};
	\node [wb] at (7,0) {};
	\node [wb] at (6,0) {};
	\node [wb] at (5,0) {};
	\node [wb] at (4,0) {};
	\node [wb] at (3,0) {};
	\node [wb] at (2,0) {};
	\node [wb] at (1,0) {};
	\node [bb] at (0,0) {};
	\node [bb] at (-1,0) {};
	\node [bb] at (-2,0) {};
	\node [bb] at (-3,0) {};
	\node [bb] at (-4,0) {};
	\node [bb] at (-5,0) {};
	\node [bb] at (-6,0) {};
	\node [bb] at (-7,0) {};
	\node [bb] at (-8,0) {};
	\node [bb] at (-9,0) {};

	\node [wb] at (10,1) {};
	\node [wb] at (9,1) {};
	\node [wb] at (8,1) {};
	\node [wb] at (7,1) {};
	\node [wb] at (6,1) {};
	\node [wb] at (5,1) {};
	\node [wb] at (4,1) {};
	\node [wb] at (3,1) {};
	\node [wb] at (2,1) {};
	\node [bb] at (1,1) {};
	\node [bb] at (0,1) {};
	\node [bb] at (-1,1) {};
	\node [bb] at (-2,1) {};
	\node [bb] at (-3,1) {};
	\node [bb] at (-4,1) {};
	\node [bb] at (-5,1) {};
	\node [bb] at (-6,1) {};
	\node [bb] at (-7,1) {};
	\node [bb] at (-8,1) {};
	\node [bb] at (-9,1) {};

	\node [wb] at (10,2) {};
	\node [wb] at (9,2) {};
	\node [wb] at (8,2) {};
	\node [wb] at (7,2) {};
	\node [wb] at (6,2) {};
	\node [wb] at (5,2) {};
	\node [bb] at (4,2) {};
	\node [wb] at (3,2) {};
	\node [wb] at (2,2) {};
	\node [bb] at (1,2) {};
	\node [bb] at (0,2) {};
	\node [bb] at (-1,2) {};
	\node [bb] at (-2,2) {};
	\node [bb] at (-3,2) {};
	\node [bb] at (-4,2) {};
	\node [bb] at (-5,2) {};
	\node [bb] at (-6,2) {};
	\node [bb] at (-7,2) {};
	\node [bb] at (-8,2) {};
	\node [bb] at (-9,2) {};

\draw[dashed](0.5,-0.5)--node[]{}(0.5,2.5);
\draw[](4.5,-0.5)--node[]{}(4.5,2.5);
\draw[](8.5,-0.5)--node[]{}(8.5,2.5);
\draw[](-3.5,-0.5)--node[]{}(-3.5,2.5);
\draw[](-7.5,-0.5)--node[]{}(-7.5,2.5);
\end{tikzpicture}.
\end{center}

\noindent The abacus of $\tau(\lbb,\mathfrak{s})$ is
\begin{center}
\begin{tikzpicture}[scale=0.5, bb/.style={draw,circle,fill,minimum size=2.5mm,inner sep=0pt,outer sep=0pt}, wb/.style={draw,circle,fill=white,minimum size=2.5mm,inner sep=0pt,outer sep=0pt}]

	\node [wb] at (10,0) {};
	\node [wb] at (9,0) {};
	\node [wb] at (8,0) {};
	\node [wb] at (7,0) {};
	\node [wb] at (6,0) {};
	\node [bb] at (5,0) {};
	\node [bb] at (4,0) {};
	\node [wb] at (3,0) {};
	\node [wb] at (2,0) {};
	\node [bb] at (1,0) {};
	\node [bb] at (0,0) {};
	\node [bb] at (-1,0) {};
	\node [bb] at (-2,0) {};
	\node [bb] at (-3,0) {};
	\node [bb] at (-4,0) {};
	\node [bb] at (-5,0) {};
	\node [bb] at (-6,0) {};
	\node [bb] at (-7,0) {};
	\node [bb] at (-8,0) {};
	\node [bb] at (-9,0) {};

\draw[dashed](0.5,-0.5)--node[]{}(0.5,0.5);
\draw[](4.5,-0.5)--node[]{}(4.5,0.5);
\draw[](8.5,-0.5)--node[]{}(8.5,0.5);
\draw[](-3.5,-0.5)--node[]{}(-3.5,0.5);
\draw[](-7.5,-0.5)--node[]{}(-7.5,0.5);
\end{tikzpicture}.
\end{center}

\noindent Thus $\tau(\lbb,\mathfrak{s})=((2,2),3)$.
\end{ex}

\noindent Let us now present some  properties of Uglov's map

\begin{lemme}
\label{prop 2.22}
Take $(\lbb,\ssf) \in \CP^r_n(\ell)$. If $(\mub,\mathfrak{t}):=\sigma_{\ssf}.(\lbb,\varphi_{\ell}^r(\ssf))$, then
\begin{equation*}
\Res_{\ell}(\lbb,\ssf) \equiv \Res_{\ell}(\tau(\mub,\mathfrak{t}))-\Res_{\ell}(\tau(\emptyset,\mathfrak{t})) \quad[\delta^{\Gamma}].
\end{equation*}
\end{lemme}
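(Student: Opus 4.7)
The strategy is to reduce the identity to a statement purely about the Uglov map $\tau$ and then verify it inductively by tracking how both sides change when a single node is added to $\mub$.

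First, I would use permutation invariance. By direct inspection of the defining sum, $\Res_{\ell}$ is symmetric under simultaneous reindexing of the components of a charged multipartition, and since $\mathfrak{t}_{j} \equiv \ssf_{\sigma_{\ssf}(j)} \pmod{\ell}$ the residue contribution of each part is preserved when passing from $\ssf$ to the ordered $\mathfrak{t}$. Hence $\Res_{\ell}(\lbb,\ssf) = \Res_{\ell}(\mub,\mathfrak{t})$, so the content of the lemma is
\[
\Res_{\ell}(\mub,\mathfrak{t}) \equiv \Res_{\ell}(\tau(\mub,\mathfrak{t})) - \Res_{\ell}(\tau(\emptyset,\mathfrak{t})) \pmod{\delta^{\Gamma}}
\]
for any $(\mub,\mathfrak{t}) \in \mathcal{P}^r \times \llbracket 0,\ell-1\rrbracket^r_{\leq}$.

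I would prove this by induction on $n = |\mub|$. The base case $\mub = \emptyset$ is immediate. For the inductive step, adding a removable node $(a,b,c)$ of residue $i = \overline{b-a+\mathfrak{t}_c}$ increments the left-hand side by $\alpha_i$. In the $r$-abacus $A_{\mub,\mathfrak{t}}$ this addition corresponds to a single bead move in row $c$, from position $p = \mu^c_b - b + \mathfrak{t}_c$ to $p+1$. Applying the explicit Uglov formula, this translates to a modification of the single abacus of $\tau(\mub,\mathfrak{t})$: the bead at the Uglov position $\iota(p,c) = \lfloor p/\ell\rfloor r\ell + (r-c)\ell + p\%\ell$ is replaced by a bead at $\iota(p+1,c)$.

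The crux of the argument is to show that this bead swap changes $\Res_{\ell}(\tau(\mub,\mathfrak{t}))$ by $\alpha_i$ modulo $\delta^{\Gamma}$. The resulting skew shape in the single partition may contain many cells, since intermediate positions between $\iota(p,c)$ and $\iota(p+1,c)$ can themselves carry beads that force a chain rearrangement. The key combinatorial observation is that the Uglov interleaving distributes residues cyclically across the $r$ runners of the single abacus, so that every complete $\ell$-block of positions traversed contributes one full copy of $\delta^{\Gamma}$, and the residual contribution is exactly $\alpha_i$. The main obstacle is the bookkeeping in the case analysis on the value of $p \bmod \ell$ and on the bead pattern between $\iota(p,c)$ and $\iota(p+1,c)$; this can be reduced to the statement that the Uglov bijection is compatible with the natural $\Z/\ell\Z$-grading of cells by residue, up to the fixed offset $\Res_{\ell}(\tau(\emptyset,\mathfrak{t}))$ coming from the initial configuration, which is precisely why the latter quantity appears as a correction on the right-hand side.
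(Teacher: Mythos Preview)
The paper does not prove this lemma directly; it simply records it as an immediate corollary of \cite[Proposition~2.22]{JL19}. Your sketch is effectively a self-contained proof of (the relevant case of) that proposition, so the comparison is ``citation versus direct argument'' rather than a difference of mathematical strategy.

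Your inductive approach is sound and is essentially how one proves the cited result. Two remarks that would tighten it. First, a slip of the pen: you wrote ``removable node'' where you meant ``addable''. Second, the bookkeeping you flag as the main obstacle is lighter than you suggest once you use the standard border-strip/abacus dictionary. Since $\iota(p,c)\equiv p\pmod{\ell}$, there are only two cases. If $p\%\ell\neq\ell-1$ then $\iota(p+1,c)=\iota(p,c)+1$ and a single cell of the expected residue is added to $\tau(\mub,\mathfrak{t})$. If $p\%\ell=\ell-1$ then $\iota(p+1,c)-\iota(p,c)=(r-1)\ell+1$, and moving a bead by this amount in the single abacus adds a border strip of that length; a border strip of length $m$ always has cells with $m$ \emph{consecutive} contents, independently of the bead pattern in between. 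Hence its residue contribution is $(r-1)\delta^{\Gamma}+\alpha_i$, and no separate case analysis on intermediate beads is required.
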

\begin{proof}
This is a direct corollary of \cite[Proposition $2.22$]{JL19}.
\end{proof}

\begin{rmq}
Note that the coefficient of $\delta^{\Gamma}$ in Lemma \ref{prop 2.22} is equal to $\langle\Res_{\ell}(\lbb,\ssf),\Lambda_0\rangle r$ thanks to \cite[Proposition $2.22$]{JL19}.
\end{rmq}

\begin{lemme}
\label{lemme somme}
Take $\ssf \in {(\Z/\ell\Z)}^r$. If $\mathcalb{S}:=\sum_{j=1}^r{\ssf_j}$, and $(\mu,\mathfrak{t})=\tau(\sigma_{\ssf}.(\emptyset,\varphi^r_{\ell}(\ssf)))$, then
\begin{equation*}
\Lambda^{\ssf}+\Res_{\ell}(\mu,\mathfrak{t}) \equiv (r-1)\Lambda_0 + \Lambda_{\mathcalb{S}}\quad [\delta^{\Gamma}].
\end{equation*}
\end{lemme}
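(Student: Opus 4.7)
The plan is to reformulate the congruence as a hub identity and verify it by reading corners off the abacus produced by Uglov's map.

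First I would project to $P_\Gamma/\Z\delta^\Gamma$, for which the classes $[\Lambda_0],\ldots,[\Lambda_{\ell-1}]$ form a $\Z$-basis. The relation $\alpha_i\equiv 2\Lambda_i-\Lambda_{i-1}-\Lambda_{i+1}\pmod{\delta^\Gamma}$ identifies the class of $\Res_\ell(\mu,\mathfrak{t})=\sum_j d_j\alpha_j$ with $c_\Gamma(\Res_\ell(\mu,\mathfrak{t}))$. Applying Lemma~\ref{lemme hub res} to the charged partition $(\mu,\mathfrak{t})$, this class is also equal to $\Theta_\ell(\mu,\mathfrak{t})+\Lambda^\mathfrak{t}$; and since $\mathfrak{t}=S\equiv\mathcalb{S}\pmod{\ell}$ one has $\Lambda^\mathfrak{t}=\Lambda_\mathcalb{S}$. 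Thus the target congruence is equivalent to the hub identity
\begin{equation*}
\Theta_\ell(\mu,\mathfrak{t})\equiv (r-1)\Lambda_0-\Lambda^\ssf\pmod{\delta^\Gamma},
\end{equation*}
i.e.\ to $\Theta_\ell(\mu,\mathfrak{t})_i=-w^\ssf_i$ for $i\neq 0$ and $\Theta_\ell(\mu,\mathfrak{t})_0=r-1-w^\ssf_0$.

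Second, I would make the abacus of $(\mu,\mathfrak{t})$ explicit. Writing $\mathfrak{s}'=\sigma_\ssf.\varphi^r_\ell(\ssf)$ with sorted components $\mathfrak{s}'_1\leq\cdots\leq\mathfrak{s}'_r$, Uglov's map turns $A_{(\emptyset,\mathfrak{s}')}$ into a $1$-abacus whose bead pattern is a contiguous tail at $-\infty$ together with finitely many extra beads grouped into blocks indexed by the $\mathfrak{s}'_j$. The addable and removable corners of $(\mu,\mathfrak{t})$ correspond respectively to bead-to-hole and hole-to-bead transitions at the boundaries of these blocks, and the $\ell$-residue of each corner is read off directly from its bead position and the charge $\mathfrak{t}$. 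Summing the resulting $\pm\Lambda_i$ contributions, each index $j$ with $\ssf_j\neq 0$ produces a net $-\Lambda_{\ssf_j}$ in the hub, while the transitions between consecutive non-empty blocks collectively account for the $(r-1)\Lambda_0$ term.

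The hard part will be the combinatorial bookkeeping of this last step: when several $\mathfrak{s}'_j$ coincide, or some of them vanish, the corresponding blocks collapse or disappear, and one must verify case by case that the total hub still amounts to $(r-1-w^\ssf_0)\Lambda_0-\sum_{i\neq 0}w^\ssf_i\Lambda_i$. The preliminary sorting by $\sigma_\ssf$ is essential here, since it groups equal charges into adjacent blocks and lets the hub computation proceed uniformly block by block.
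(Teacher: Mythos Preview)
Your reduction in the first step is correct and is genuinely different from what the paper does. The paper never passes through the hub: it writes down the partition $\mu$ explicitly as $(\beta_{\mathfrak t}(1),\dots,\beta_{\mathfrak t}(\Ssf))$ with $\beta_{\mathfrak t}(i)=\sum_{j>p}(\ell-\mathfrak t_j)$, computes $\Res_\ell(\mu,\Ssf)$ as a double sum $\sum_i\sum_{j=1}^{\rho_{\mathfrak t}(i)}\alpha_{j+\Ssf-i}$, rewrites each $\alpha_k$ as $2\Lambda_k-\Lambda_{k-1}-\Lambda_{k+1}$, and then telescopes first in $i$ and then in $p$. Your route instead uses Lemma~\ref{lemme hub res} to trade $\Res_\ell$ for $\Theta_\ell$, so that the whole statement becomes the concrete hub identity $\Theta_\ell(\mu,S)=(r-1)\Lambda_0-\Lambda^{\ssf}$ in $P_\Gamma$. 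This is a legitimate and rather elegant reformulation: it replaces a residue sum over all boxes by a signed count of corners, and the target expression $(r-1)\Lambda_0-\Lambda^{\ssf}$ already suggests what each runner of the Uglov abacus should contribute.

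Where your proposal remains a sketch is exactly where you say it does. The sentence ``each index $j$ with $\ssf_j\neq 0$ produces a net $-\Lambda_{\ssf_j}$, while the transitions between consecutive non-empty blocks account for the $(r-1)\Lambda_0$'' is the entire content of the lemma once the reduction is made, and it is not obvious without actually writing out the bead pattern of $\tau(\emptyset,\mathfrak s')$. In particular you will need the explicit staircase shape of $\mu$ (this is precisely the $\beta_{\mathfrak t}$ description the paper uses) to locate the corners, and the degenerate cases --- equal $\mathfrak s'_j$'s, vanishing $\mathfrak s'_j$'s --- do change which transitions occur. None of this is an obstruction, but it is real work that your outline does not yet do; the paper's telescoping argument, while less conceptual, handles all cases uniformly without a case split. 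If you carry your approach through you will likely end up reproving the staircase description of $\mu$ anyway, so the saving over the paper's method is mainly in the final algebra, not in the combinatorial input.
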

\begin{proof}
We have that $\mathfrak{t}_1 \leq \mathfrak{t}_2 \leq \dots \leq \mathfrak{t}_r$. For $p \in \llbracket 1, r \rrbracket$, let $\Ssf_0=0$ and $\Ssf_p:=\sum_{j=1}^p{\mathfrak{t}_j}$. Moreoever, for each $i \in \llbracket 1, \Ssf \rrbracket$, define $\beta_{\tkk}(i):=\sum_{j=p+1}^r{(\ell\smin \tkk_j)}$ where $p$ is such that $i \in \llbracket \Ssf_{p\smin 1}+1,\Ssf_p \rrbracket$. We will also need to consider also the Euclidiean division of $\beta_{\tkk}(i)$ by $\ell$. For each $i \in \llbracket 1, \Ssf\rrbracket$, let $\beta_{\tkk}(i)=\kappa_{\tkk}(i)\ell+\rho_{\tkk}(i)$ with $\rho_{\tkk}(i) \in \llbracket 0 ,\ell\smin 1 \rrbracket$. With this notation, the partition $\tau(\emptyset, \tkk)$ is $(\beta_{\tkk}(1),\beta_{\tkk}(2),\dots, \beta_{\tkk}(\Ssf))$. Now:
\begin{equation*}
\Res_{\ell}(\tau(\emptyset,\tkk)) = \sum_{i=1}^{\Ssf}{\left(k_{\tkk}(i)\delta^{\Gamma}+ \sum_{j=1}^{\rho_{\tkk}(i)}{\alpha_{j+\Ssf\smin i}}\right)}.
\end{equation*}
Note that $\beta_{\tkk}(i)=\beta_{\tkk}(i')$ for all $(i,i') \in \llbracket \Ssf_{p\smin 1}+1,\Ssf_p\rrbracket^2$ and that $ \forall i \geq \Ssf_{r\smin 1}, \beta_{\tkk}(i)=0$. Reasoning modulo $\delta^{\Gamma}$ gives:
\begin{equation*}
\Res_{\ell}(\tau(\emptyset,\tkk)) \equiv \sum_{i=1}^{\Ssf_{r\smin 1}}{\sum_{j=1}^{\rho_{\tkk}(i)}{\alpha_{j+\Ssf\smin i}}} \quad [\delta^{\Gamma}].
\end{equation*}
Furthermore:
\begin{center}
\begin{align*}
\sum_{i=1}^{\Ssf_{r\smin 1}}{\sum_{j=1}^{\rho_{\tkk}(i)}{\alpha_{j+\Ssf\smin i}}} &= \sum_{i=1}^{\Ssf_{r\smin 1}}{\sum_{j=1}^{\rho_{\tkk}(i)}{\left(2*\Lambda_{j+ \Ssf \smin i}- \Lambda_{j+\Ssf \smin (i+1)} -\Lambda_{j+ \Ssf \smin (i\smin 1)}\right)}}\\
&= \sum_{p=1}^{r\smin 1}{\sum_{j=1}^{\Ssf_p}{\sum_{i=\Ssf_{p\smin 1}+1}^{\Ssf_p}{\left(2*\Lambda_{j+ \Ssf \smin i}- \Lambda_{j+\Ssf \smin (i+1)} -\Lambda_{j+ \Ssf \smin (i\smin 1)}\right)}}}\\
&= \sum_{p=1}^{r\smin 1}{\sum_{j=1}^{\Ssf_p}{\left(\Lambda_{j+ \Ssf \smin (\Ssf_{p\smin 1}+1)} + \Lambda_{j+\Ssf\smin \Ssf_p} - \Lambda_{j+\Ssf \smin (\Ssf_p +1)} -\Lambda_{j+ \Ssf \smin \Ssf_{p\smin 1}}\right)}}\\
&= \sum_{p=1}^{r\smin 1}{\left(\Lambda_{\Ssf -\Ssf_p + \rho_{\tkk}(\Ssf_p)} - \Lambda_{\Ssf\smin \Ssf_p} + \Lambda_{\Ssf \smin \Ssf_{p\smin 1}}-\Lambda_{\Ssf \smin \Ssf_{p\smin 1} + \rho_{\tkk}(\Ssf_p)}\right)}\\
&= \Lambda_{\Ssf} - \Lambda_{\tkk_{r}} + \sum_{p=1}^{r\smin 1}{\left(\Lambda_{\Ssf \smin \Ssf_p + \rho_{\tkk}(\Ssf_p)}-\Lambda_{\Ssf \smin \Ssf_{p\smin 1} + \rho_{\tkk}(\Ssf_p)}\right)}\\
\end{align*}
\end{center}
Coming back to the definitions, for each $p \in \llbracket1, r\smin 1\rrbracket$, we have:
\begin{center}
$\begin{cases}
\Ssf \smin \Ssf_p + \rho_{\tkk}(\Ssf_p) \equiv 0 \quad[\ell]\\
\Ssf \smin \Ssf_{p\smin 1} + \rho_{\tkk}(\Ssf_p) \equiv \tkk_{p} \quad [\ell]
\end{cases}$
\end{center}
Since $\Lambda^{\ssf}=\sum_{j=1}^r{\Lambda_{\tkk_j}}$. We have proven the desired relation.
\end{proof}

\noindent We have now everything to prove Theorem \ref{link multicharge dom w}.

\begin{proof}[Proof of Theorem \ref{link multicharge dom w}]
Take $(\lbb,\ssf) \in \CP_n^r(\ell)$. Take $\omega \in W_{\Gamma}$, such that
\begin{equation*}
{\omega.\Res_{\ell}(\lbb,\ssf)=\alpha_{\lbb,\ssf} + k_{\lbb,\ssf} \delta^{\Gamma}}.
\end{equation*}

\noindent Let us first show that if $(\mub,\ssf')$ is obtained from $(\lbb,\ssf)$ by doing an $\ell$-elementary operation, then $\omega.\Res_{\ell}(\mub,\ssf')\equiv \alpha_{\mub,\ssf'} [\delta^{\Gamma}]$.
By construction, the partition $\mu:=\tau(\sigma_{\ssf'}.(\mu^{\bullet},\varphi^r_{\ell}(\ssf')))$ is obtained from $\lb=\tau(\sigma_{\ssf}.(\lbb,\varphi^r_{\ell}(\ssf)))$ by removing an $\ell$-hook \cite[Theorem $3.7$]{JL19}. Let $\Ssf:=\sum_{j=1}^r{\ssf_j}$. Lemma \ref{prop 2.22} gives moreover that:
\begin{equation*}
\left\{
\begin{aligned}
\Res_{\ell}(\lbb,\ssf) &\equiv \Res_{\ell}(\lb,\Ssf)-\Res_{\ell}(\tau(\emptyset,\mathfrak{t})) \quad [\delta^{\Gamma}]  \\
\Res_{\ell}(\mub,\ssf') &\equiv \Res_{\ell}(\mu,\Ssf)-\Res_{\ell}(\tau(\emptyset,\mathfrak{t}')) \quad [\delta^{\Gamma}]
\end{aligned}
\right.
\end{equation*}
\noindent where $\mathfrak{t}:=\sigma_{\ssf}.\varphi_{\ell}^r(\ssf)$ and $\mathfrak{t}':=\sigma_{\ssf'}.\varphi_{\ell}^r(\ssf')$.
This implies that
\begin{equation*}
\Res_{\ell}(\lbb,\ssf) \equiv \Res_{\ell}(\mub,\ssf')+\Res_{\ell}(\tau(\emptyset,\mathfrak{t}')) -\Res_{\ell}(\tau(\emptyset,\mathfrak{t})) \quad [\delta^{\Gamma}].
\end{equation*}
Applying $\omega$ gives
\begin{equation*}
\omega*\Lambda^{\ssf} \smin \Lambda^+_{\lbb,\ssf} \equiv \omega*\Res_{\ell}(\mub,\ssf')+\omega*\left(\Res_{\ell}(\tau(\emptyset,\mathfrak{t}'))-\Res_{\ell}(\tau(\emptyset,\mathfrak{t}))\right) \quad [\delta^{\Gamma}].
\end{equation*}
Using Lemma \ref{lemme somme}, we obtain
\begin{equation*}
\omega*\Res_{\ell}(\mub,\ssf') \equiv \omega*\Lambda^{\ssf'} - \Lambda^+_{\lbb,\ssf} \quad [\delta^{\Gamma}].
\end{equation*}

\noindent Using \cite[Proposition $12.6$]{kac90}, we have that
\begin{equation}
\label{link doms}
\Lambda^+_{\mub,\ssf'} \equiv \Lambda^+_{\lbb,\ssf} \quad [\delta^{\Gamma}]
\end{equation}
\noindent Since $\Lambda^+_{\mub,\ssf'}$ is uniquely defined, we can conclude that
\begin{equation*}
\omega.\Res_{\ell}(\mub,\ssf') \equiv \alpha_{\mub,\ssf'} [\delta^{\Gamma}].
\end{equation*}

\noindent Denote by $(\gammab,\ssf')$ the $\ell$-core of $(\lbb,\ssf)$. Thanks to what we have just proven, we have that $\omega.\Res_{\ell}(\gammab,\ssf') = 0$. Combining Proposition \ref{dim easy} with Proposition \ref{weight dim}, gives that ${\alpha_{\gammab,\ssf'}=0}$ and that $k_{\gammab,\ssf'}=0$. Indeed, $(\gammab,\ssf')$ is an $\ell$-core and thus $\Omega_{\ell}(\gammab,\ssf')=0$. Finally, equation (\ref{link doms}) gives that $\Lambda^{\ssf'} \equiv \Lambda^+_{\lbb,\ssf} [\delta^{\Gamma}]$.
\end{proof}

\subsection{Core blocks}
\noindent In \cite[§$3$]{Fay07}, the author introduces the notion of core blocks. Here, we will first recall the central characterisation and then we will give another characterisation that comes from the irreducible components side.

\begin{deff}
A block $B$ of $H_{n,r}(\ell,\ssf)$ is a core block if all multipartitions $\lbb$ that are in $B$ are $\ell$-multicores.
\end{deff}

\noindent The central characterisation of core blocks is \cite[Theorem $3.1$]{Fay07} which we recall.

\begin{deff}
Take $(\lbb,\mathfrak{s}) \in \mathcal{P}^r_n \times \Z^r$ and $(i,j) \in \Z/\ell\Z \times \llbracket 1,r\rrbracket$. Define $\mathfrak{b}^{\mathfrak{s}}_{i,j}(\lbb)$ as the maximum of the following set:
\begin{equation*}
\{k \in \Z | A_{\lbb,\mathfrak{s}}(k,j)=1 \text{ and } \bar{k}=i\}.
\end{equation*}
\end{deff}

\begin{theorem1}[\cite{Fay07}]
Let $B$ a block of $H_{n,r}(\ell,\ssf)$. The following are equivalent.
\begin{itemize}
\item $B$ is a core block.
\item $\forall m \in \Z_{\geq 0}, \forall B' \in \mathcal{B}\left(H_{m,r}(\ell,\ssf)\right), \Theta(B')=\Theta(B) \text{ and } \Omega(B') \leq \Omega(B) \Rightarrow B=B'$.
\item There exists a multipartition $\lbb \in B$ which is an $\ell$-multicore such that there exists $(\mathfrak{s},(a_1,...,a_r))\in ({\Z^r})^2$ verifying the conditions:
\vspace*{0.25cm}

\centerline{\begin{minipage}{0.8\textwidth}
\begin{tasks}{before-skip={0.5cm}}(2)
\task $\pi^r_{\ell}(\mathfrak{s})=\ssf$,
\task $\mathfrak{b}_{ij}^{\mathfrak{s}}(\lbb) \in \{a_i,a_i+\ell\}, \quad \forall (i,j) \in \Z \times \llbracket 1,r\rrbracket$.
\end{tasks}
\end{minipage}}
\end{itemize}
\end{theorem1}

\noindent We are now able to state the central theorem of this article.

\begin{lemme}
\label{link_core_res}
If $(\lbb,\ssf)$ and $(\mu^{\bullet},\ssf)$ are two $\ell$-charged, $r$-multipartitions of the same size, then $\Res_{\ell}(\lbb,\ssf)=\Res_{\ell}(\mu^{\bullet},\ssf)$ if and only if the $\ell$-core of $(\lbb,s)$ is equal to the $\ell$-core of  $(\mu^{\bullet},s)$.
\end{lemme}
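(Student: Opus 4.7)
The plan is to lift the statement to the classical case ($r=1$) via Uglov's map~$\tau$ and invoke the analogous classical theorem for a single partition. Since $\ssf$ is common to both $\lbb$ and $\mu^{\bullet}$, so are the sorted multicharge $\mathfrak{t}:=\sigma_{\ssf}.\varphi^r_{\ell}(\ssf)$ and its total $S:=\sum_{j=1}^{r}\mathfrak{t}_j$. I introduce $\tilde{\lb}:=\tau(\sigma_{\ssf}.(\lbb,\varphi^r_{\ell}(\ssf)))$ and $\tilde{\mu}:=\tau(\sigma_{\ssf}.(\mu^{\bullet},\varphi^r_{\ell}(\ssf)))$; both are classical partitions with integer charge~$S$.

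The first step is to translate the residue condition through~$\tau$. Applying Lemma~\ref{prop 2.22} to $(\lbb,\ssf)$ and to $(\mu^{\bullet},\ssf)$ and subtracting cancels the common term $\Res_{\ell}(\tau(\emptyset,\mathfrak{t}))$, leaving
\[ \Res_{\ell}(\lbb,\ssf)-\Res_{\ell}(\mu^{\bullet},\ssf) \equiv \Res_{\ell}(\tilde{\lb},S)-\Res_{\ell}(\tilde{\mu},S) \quad [\delta^{\Gamma}]. \]
The hypothesis $|\lbb|=|\mu^{\bullet}|$ forces $\Res_{\ell}(\lbb,\ssf)-\Res_{\ell}(\mu^{\bullet},\ssf)$ to have zero size in $Q_{\Gamma}$; since $|\delta^{\Gamma}|_{\Gamma}=\ell\neq 0$, the only way the two sides of the above congruence can differ by a multiple of $\delta^{\Gamma}$ while one of them vanishes is if both vanish. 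Hence $\Res_{\ell}(\lbb,\ssf)=\Res_{\ell}(\mu^{\bullet},\ssf)$ if and only if $\Res_{\ell}(\tilde{\lb},S)=\Res_{\ell}(\tilde{\mu},S)$.

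The second step is a direct appeal to the classical single-partition result: two partitions of common integer charge and same size share the same $\ell$-residue if and only if they share the same $\ell$-core. The third step is to pull the core equivalence back through~$\tau$: the compatibility of Uglov's map with the $\ell$-elementary operations on the $r$-abacus and $\ell$-hook removals on the one-row abacus—central to the well-definedness of the multipartition $\ell$-core in \cite{JL19}, cf.\ remarks~(i) and~(ii) after the definition of the $\ell$-core—implies that the multipartition $\ell$-cores of $(\lbb,\ssf)$ and $(\mu^{\bullet},\ssf)$ coincide if and only if the classical $\ell$-cores of $\tilde{\lb}$ and $\tilde{\mu}$ coincide. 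Chaining the three equivalences yields the lemma. The crux is the third step: one must track both the partition part and the multicharge part carefully under~$\tau$ to upgrade the classical core equality to an equality of multipartition cores (including of the multicharge tuples, not merely of their multisets), which is precisely what the bijectivity of~$\tau$ on the sorted strata from \cite{JL19} provides.
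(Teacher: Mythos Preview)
Your approach is sound in spirit and essentially reconstructs what the paper obtains by quoting \cite[Corollary~4.4]{JL19} together with \cite[Theorem~2.11]{LM07}: reduce to the rank-one case via Uglov's map, invoke the classical core/residue equivalence, and pull back. So the strategy matches the literature the paper cites; you are re-deriving rather than citing.

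There is, however, a genuine gap in your first step. Your size argument establishes that $\Res_{\ell}(\lbb,\ssf)-\Res_{\ell}(\mub,\ssf)$ has size~$0$, and this correctly upgrades the congruence to an equality in the \emph{backward} direction (from $\Res_{\ell}(\tilde\lb,S)=\Res_{\ell}(\tilde\mu,S)$ to $\Res_{\ell}(\lbb,\ssf)=\Res_{\ell}(\mub,\ssf)$). In the \emph{forward} direction you would need $|\tilde\lb|=|\tilde\mu|$ to run the same argument on the right-hand side, and Uglov's map does not preserve size in general (cf.\ the worked example in the paper, where a multipartition of size~$2$ is sent to a partition of size~$4$); knowing only $|\lbb|=|\mub|$ does not give this. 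Two easy repairs: either invoke the Remark after Lemma~\ref{prop 2.22}, which pins down the $\delta^{\Gamma}$-coefficient as $r\langle\Res_{\ell}(\lbb,\ssf),\Lambda_0\rangle$ (so equal residues force the coefficients to agree and the congruence becomes an equality); or, more cleanly, only work modulo~$\delta^{\Gamma}$ in step~1 and use the classical statement in its stronger form ``same charge and $\Res_{\ell}$ congruent mod~$\delta^{\Gamma}$ $\Leftrightarrow$ same $\ell$-core''. With either fix the chain of equivalences closes.
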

\begin{proof}
Combining the result of \cite[Corollary $4.4$]{JL19} with \cite[Theorem $2.11$]{LM07} gives exactly this Lemma.
\end{proof}

\begin{prop}
Take $(\lbb, \ssf) \in \CP^r_n(\ell)$ and denote by $(\gamma_{\ell}^{\bullet},\ttf)$ its $\ell$-core. The number of $r$-cycles that can be done when going from $(\lbb,\ssf)$ to $(\gamma_{\ell}^{\bullet},\ttf)$ is equal to $k_{\Res_{\ell}(\lbb,\ssf)}$ and is in particular a block invariant.
\end{prop}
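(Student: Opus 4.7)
My plan is to combine the dimension formulas of Section 3.1 with the enumeration of elementary operations due to Jacon and Lecouvey. Set $d := \Res_\ell(\lbb, \ssf)$. By \cite[Theorem 3.7]{JL19}, the total number of $\ell$-elementary operations performed when passing from $\sigma_\ssf.A_{\lbb,\varphi^r_\ell(\ssf)}$ to the abacus of its $\ell$-core equals $\Omega_\ell(\lbb, \ssf)$. Propositions \ref{weight dim} and \ref{dim easy} together rewrite this count as
\[
\Omega_\ell(\lbb, \ssf) = k_d\, r + \left(\alpha_d \cdot w^\ssf - \tfrac{1}{2} \langle \alpha_d, \alpha_d \rangle \right),
\]
and the bracketed term is $\dim \QVG_{\boldsymbol{1}}(\alpha_d, w^\ssf)/2 \geq 0$, which equals (by Proposition \ref{weight dim} applied in the reverse direction) the weight $\Omega_\ell(\mu, \ssf)$ of any $\ell$-charged $r$-multipartition $(\mu, \ssf)$ whose residue is $\alpha_d$.

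The core of the plan is then to match this algebraic splitting with a combinatorial decomposition of the elementary-operation sequence. Every $r$-cycle, by definition, is a sequence of $r$ consecutive elementary operations that preserves the integer multicharge $\mathfrak{s}$; conversely, any elementary operation not fitting into an $r$-cycle must contribute to the net shift of multicharge from $\mathfrak{s}$ to $\mathfrak{s}'$. I would show that one can reorder the operation sequence so as to first perform all available $r$-cycles, thus reducing $(\lbb, \ssf)$ to an intermediate state $(\mu, \ssf)$ with residue $\alpha_d$ and the same integer multicharge $\mathfrak{s}$, and only then complete the passage to the $\ell$-core via $\alpha_d \cdot w^\ssf - \tfrac{1}{2} \langle \alpha_d, \alpha_d\rangle$ further multicharge-shifting operations. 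The internal consistency of this picture is guaranteed by Theorem \ref{link multicharge dom w}: since $\Lambda^{\ssf'} \equiv \Lambda^+_{\lbb, \ssf}\; [\delta^\Gamma]$, the total multicharge shift $\Lambda^\ssf - \Lambda^{\ssf'}$ coincides with $\alpha_d$ modulo $\delta^\Gamma$, which matches the algebraic splitting above.

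Putting the pieces together, the number of $r$-cycles equals $k_d = k_{\Res_\ell(\lbb, \ssf)}$. Block-invariance follows immediately from the fact that $\Res_\ell(\lbb, \ssf)$ depends only on the block containing $\lbb$, by the Lyle-Mathas result recalled in Section 4. The main obstacle is rigorously justifying the combinatorial reorganization above---specifically, that after performing all available $r$-cycles one always reaches an intermediate state from which every further elementary operation necessarily shifts the multicharge. I expect this to follow from a careful application of Uglov's map $\tau$ in the spirit of the proof of Theorem \ref{link multicharge dom w}, under which $r$-cycles in the multi-abacus correspond to trackable patterns of $\ell$-hook removals in the single-partition abacus $\tau(\sigma_\ssf.(\lbb, \varphi_\ell^r(\ssf)))$, so that the counts of multicharge-preserving and multicharge-shifting operations can be compared on each side of the bijection.
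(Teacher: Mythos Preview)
The paper does not supply a proof of this Proposition; it is stated without argument immediately before the core-block Theorem. So there is no ``paper's proof'' to compare against line by line. That said, the proof of the subsequent Theorem contains the decisive tool you are missing, and using it would let you close the gap you identify.

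Your decomposition
\[
\Omega_\ell(\lbb,\ssf)=k_d\,r+\bigl(\alpha_d\cdot w^{\ssf}-\tfrac{1}{2}\langle\alpha_d,\alpha_d\rangle\bigr)
\]
is correct and is indeed the algebraic shadow of the combinatorial splitting you want. The acknowledged obstacle --- showing that one may reorder so as to perform exactly $k_d$ $r$-cycles first and land at residue $\alpha_d$ --- is real, and your proposed fix via Uglov's map is plausible but circuitous.

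The more direct route, implicit in the paper, is the equivalence proved in the core-block Theorem: $(\mub,\ssf)$ is obtained from $(\lbb,\ssf)$ by removing an $r$-cycle if and only if $\Res_\ell(\lbb,\ssf)=\Res_\ell(\mub,\ssf)+\delta^{\Gamma}$. Since $W_\Gamma$ fixes $\delta^{\Gamma}$, one $r$-cycle takes $k_d$ to $k_d-1$ while leaving $\Lambda^+_d$ unchanged. Conversely, an $r$-cycle is available at $(\nub,\ssf)$ precisely when $\Lambda^{\ssf}-\Res_\ell(\nub,\ssf)+\delta^{\Gamma}\in P(\Lambda^{\ssf})$, which by the description of weight strings \cite[(12.6.1)]{kac90} holds exactly when $k_{\Res_\ell(\nub,\ssf)}\geq 1$. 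Iterating gives exactly $k_d$ consecutive $r$-cycles before none remain; together with the order-independence of the core procedure \cite[Definition 4.2]{JL19} this yields the count without any separate ``reorganization'' argument or appeal to Uglov's map. This is both shorter and avoids the part of your plan you flagged as unfinished.
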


\begin{thm}
Let $B$ be a block of $H_{n,r}(\ell,\ssf)$. The following are equivalent.
\begin{enumerate}[label=(\alph*)]
\item $B$ is a core block.
\item $B$ contains no $r$-cycles.
\item $k_B=0$.
\end{enumerate}
\end{thm}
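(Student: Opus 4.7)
Proof plan. The equivalence (b) $\Leftrightarrow$ (c) is immediate from the proposition immediately preceding the theorem, which identifies $k_B$ with the number of $r$-cycles performed in reducing any $(\lbb,\ssf) \in B$ to its $\ell$-core: accordingly $k_B = 0$ if and only if no $r$-cycle is needed, which is exactly (b).

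For (a) $\Leftrightarrow$ (b), the key tool is the abacus description of $\ell$-hooks. By the remark following the definition of $\ell$-movement, an $\ell$-movement on $A_{\lbb,\mathfrak{s}}$ at position $(i,j)$ can be performed exactly when $A_{\lbb,\mathfrak{s}}(i,j) = 1$ and $A_{\lbb,\mathfrak{s}}(i-\ell,j) = 0$, which is the bead condition for the component $\lb^j$ to contain a removable $\ell$-hook. Since every $\ell$-movement is itself an $r$-cycle, the absence of $r$-cycles in $B$ forces the absence of $\ell$-movements on every $A_{\lbb,\mathfrak{s}}$ with $(\lbb,\ssf) \in B$, hence the absence of $\ell$-hooks in each $\lb^j$: thus $\lbb$ is an $\ell$-multicore, proving (b) $\Rightarrow$ (a).

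For the converse (a) $\Rightarrow$ (b), assume $B$ is a core block and pick $(\lbb,\ssf) \in B$. Since $\lbb$ is an $\ell$-multicore, no $\ell$-hook can be removed from any $\lb^j$, so no $\ell$-movement can be performed on $A_{\lbb,\mathfrak{s}}$. The $\ell$-elementary operations needed to reach the $\ell$-core $(\gammab,\ssf')$ merely reshuffle the multicharge from $\ssf$ to $\ssf'$ without removing any $\ell$-hook. Invoking the structural analysis of $r$-cycles in \cite[\S 4.1]{JL19}, any $r$-cycle arising in this reduction corresponds combinatorially to an $\ell$-movement on some row (and hence to an $\ell$-hook removal), so no $r$-cycle occurs. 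By the preceding proposition, $k_B = 0$.

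The main obstacle lies in the direction (a) $\Rightarrow$ (b): one must invoke the finer combinatorial classification of $r$-cycles from \cite{JL19} to rule out that multicharge-reshuffling $\ell$-elementary operations combine into a complete $r$-cycle in the absence of $\ell$-hooks, so that the $\ell$-multicore property of $\lbb$ forces $k_B = 0$. The two remaining implications are direct applications of the preceding proposition and of the correspondence \emph{$\ell$-hook $\Leftrightarrow$ $\ell$-movement} on the abacus.
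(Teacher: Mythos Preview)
Your argument for (b)$\Rightarrow$(a) is correct and in fact slicker than the paper's (which goes through Fayers's $\mathfrak{b}_{ij}$-criterion via an explicit abacus computation): you simply observe that an $\ell$-hook in some component $\lb^j$ yields an $\ell$-movement, and every $\ell$-movement is an $r$-cycle. Your (b)$\Leftrightarrow$(c) via the preceding proposition is also acceptable; the paper instead argues directly that removing one $r$-cycle subtracts exactly $\delta^{\Gamma}$ from the $\ell$-residue (same hub by Lemma~\ref{lemme hub res}, weight drops by $r$).

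The genuine gap is in (a)$\Rightarrow$(b). You use only that the chosen $\lbb$ is an $\ell$-multicore, deduce that $A_{\lbb,\ssf}$ admits no $\ell$-movement, and then assert that this rules out all $r$-cycles. That last implication is false, and \cite[\S4.1]{JL19} does not supply it: an individual $\ell$-multicore can very well admit an $r$-cycle that is not an $\ell$-movement. For $r=\ell=2$ and $\mathfrak{s}=(0,0)$, the $2$-multicore $\big((3,2,1),\emptyset\big)$ admits no $2$-movement, yet the pair of elementary operations ``bead $(3,1)\to(3,2)$, then bead $(0,2)\to(-2,1)$'' is a $2$-cycle; consistently, its block also contains the non-multicore $\big((1),(5)\big)$ and is therefore not a core block. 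Your reasoning only exploits a pointwise hypothesis on $\lbb$, whereas what is available is the \emph{block} hypothesis that every multipartition in $B$ is a multicore. The paper closes this direction by the contrapositive, using Fayers's weight-minimality characterisation of core blocks: if some $(\lbb,\ssf)\in B$ admitted an $r$-cycle producing $(\mub,\ssf)$, then $\Theta_\ell(\mub,\ssf)=\Theta_\ell(\lbb,\ssf)$ while $\Omega_\ell(\mub,\ssf)=\Omega_\ell(\lbb,\ssf)-r$, so the block of $\mub$ has the same hub and strictly smaller weight than $B$, contradicting \cite[Theorem~3.1]{Fay07}.
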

\begin{proof}
Let us start by proving that $(a)$ is equivalent to $(b)$. Suppose that we have $(\lbb,\ssf)$ in $B$ that contains an $r$-cyle. Removing this $r$-cycle, we obtain a new $\ell$-charged, $r$-multipartition $(\mub,\ssf)$. An $r$-cyle does not change the charge and thus does not change de $\ell$-charge. Moreover $\Theta_{\ell}(\lbb,\ssf)=\Theta_{\ell}(\mub,\ssf)$ since $\ell$-elementary operations don't change the hub. But we have change the weight and we thus have ${\Omega_{\ell}(\mub,\ssf)=\Omega_{\ell}(\lbb,\ssf)-r}$ which contradicts \cite[Theorem $3.1$]{Fay07}.\\
For any $(i,j)\in \Z/\ell\Z \times \llbracket 1, r\rrbracket$, let us denote by $Z_i^{>j}:=\left\{(k_1,k_2) \in \Z \times \llbracket 1,r \rrbracket \big | \overline{k_1}=i \text{ and } k_2>j \right\}$ and by $Z_i^{\leq j}:=\left\{(k_1,k_2) \in \Z \times \llbracket 1,r \rrbracket \big | \overline{k_1}=i \text{ and } k_2\leq j \right\}$. Suppose now that we have $(\lbb,\ssf)$ in $B$ that does not contain any $r$-cyle. Let $\mathfrak{s}=\varphi^r_{\ell}(\ssf)$. We then have that
\begin{equation*}
\exists j_0 \in \llbracket 1,r\rrbracket, \forall i \in \Z/\ell\Z, \forall (k_1,k_2), (t_1,t_2)) \in Z_i^{>j_0} \times Z_i^{\leq j_0},
\begin{cases}
\sigma_{\ssf}.A_{\lbb,\mathfrak{s}}(k_1,k_2)=1,\\
\text{or }\sigma_{\ssf}.A_{\lbb,\mathfrak{s}}(t_1,t_2)=0,\\
\text{or }k_2 > t_2.
\end{cases}
\end{equation*}
For $i \in \Z/\ell\Z$, define $p_i:=\min\left\{k \in \Z \big | \bar{k}=i \text{ and } \exists j \in \llbracket 1,r \rrbracket, A_{\lbb,\mathfrak{s}}(k,j)=0\right\}$.
This then gives us the following expressions of the $\mathfrak{b}^{\mathfrak{s}}_{i,j}(\lbb)$ For all $(i,j) \in \Z/\ell\Z \times \llbracket 1,j_0 \rrbracket$,
\begin{equation*}
\mathfrak{b}^{\mathfrak{s}}_{i,j}(\lbb)=
\begin{cases}
p_i+i\scalebox{.7}{\%}\ell \text{ or } p_i\smin \ell+i\scalebox{.7}{\%}\ell \quad \text{ if } j \leq j_0\\
p_i+i\scalebox{.7}{\%}\ell \text{ or } p_i+\ell+i\scalebox{.7}{\%}\ell \quad \text{ else}.
\end{cases}
\end{equation*}
This then gives that $B$ is a core block using the first characterisation in \cite[Theorem $3.1$]{Fay07}.\\
There remains to prove that $(b)$ is equivalent to $(c)$. To show this equivalence, it is enough to prove that $(\mub,\ssf)$ is obtained from $(\lbb,\ssf)$ by removing an $r$-cycle if and only if ${\Res_{\ell}(\lbb,\ssf)=\Res_{\ell}(\mub,\ssf)+\delta^{\Gamma}}$. If $(\mub,\ssf)$ is obtained from $(\lbb,\ssf)$ by removing an $r$-cycle, then we have that $\Theta_{\ell}(\lbb,\ssf)=\Theta_{\ell}(\mub,\ssf)$. Furthermore, Lemma \ref{lemme hub res} implies that $c_{\Gamma}(\Res_{\ell}(\lbb,\ssf)-\Res_{\ell}(\mub,\ssf)) \equiv 0 [\delta^{\Gamma}]$. We then have that $t \in \Z$ such that ${\Res_{\ell}(\lbb,\ssf)=\Res_{\ell}(\mub,\ssf) + t\delta^{\Gamma}}$. Using that ${\Omega_{\ell}(\mub,\ssf)=\Omega_{\ell}(\lbb,\ssf)-r}$, we have that ${t=1}$. Conversely, ${\Res_{\ell}(\lbb,\ssf)=\Res_{\ell}(\mub,\ssf)+\delta^{\Gamma}}$ implies that ${\Omega_{\ell}(\lbb,\ssf)=\Omega_{\ell}(\mub,\ssf)+r}$. In particular, it implies that the $r$-multipartition $\mub$ has been obtained from $\lbb$ by doing $r$ $\ell$-elementary operations without changing the $\ell$-charge. We can then conclude that $(\mub,\ssf)$ is obtained from $(\lbb,\ssf)$ by removing an $r$-cycle.
\end{proof}

\begin{rmq}
If $B$ is a block of $H_{n,r}(\ell,\ssf)$ such that $k_B=0$, then each $r$-multipartition $\lbb$ in $B$ is such that there exists $\omega \in W_{\Gamma}$ such that $\Res_{\ell}(\lbb,\ssf)=\omega*\Lambda^+_{B}$. Therefore, $\Res_{\ell}(\lbb,\ssf)$ is a maximal weight of $L(\Lambda^{\ssf})$.
\end{rmq}

\Addresses

\printbibliography
\end{document}